\newtheorem{prop}{Proposition}[section]
\theoremstyle{remark} 
\numberwithin{equation}{section}
\newcommand{\Beq}{\begin{equation}}
\newcommand{\Eeq}{\end{equation}}
\newcommand{\Beqr}{\begin{eqnarray}}
\newcommand{\Eeqr}{\end{eqnarray}}
\newcommand{\mpM}{{\mathcal P} {M}}
\newcommand{\mbr}{{\mathbb R}}
\newcommand{\mct}{{\mathcal T }}
\newcommand{\mpm}{{\mathcal P}M}
\newcommand{\oab}{\omega^{(A,B)}}
\newcommand{\om}{\omega}
 \newcommand{\ovA}{{\bar A}}
\newcommand{\ovg}{  {\overline\gamma}}  
\newcommand{\ovG}{  {\overline\Gamma}}
\newcommand{\ovv}{{\overline  v}}
\newcommand{\ovw}{{\overline  w}}
\newcommand{{\wtlg}}{\widetilde\gamma }
\newcommand{{\wtlG}}{\widetilde\Gamma }
\newcommand{{\wtlv}}{\widetilde v }
\newcommand{{\tlg}}{\tilde\gamma }
\newcommand{{\tlG}}{\tilde\Gamma }
 \newcommand{\tlv}{{\tilde v}}
\newcommand{\catdeca}{{\mathbf P}^{\rm dec}_{\bar A}}
\newcommand{\pap}{{\mathcal P}_{\bar A}P}
\newcommand{\papp}{{\mathcal P}_{{\bar A}'}P}
\newcommand{\pdbap}{{\mathcal P}^{\rm dec}_{\bar A}P}
\newcommand{\mbba}{\mathbf {A }}
\newcommand{\mbbb}{\mathbf {B} }
\newcommand{\mbg}{\mathbf {G} }
\newcommand{\mbp}{\mathbf {P} }
\newcommand{\Obj}{{\rm Obj }}
\newcommand{\Mor}{{\rm Mor }}
\begin{document}
\title[Connections on  decorated path space  bundles]{Connections on  decorated path space  bundles}
\author{Saikat Chatterjee }
\address{Saikat Chatterjee, Institut des Hautes \'Etudes Scientifiques \\
35 Route de Chartres\\
Bures-sur-Yvette-91440
France}
\email{saikat.chat01@gmail.com}

\author{Amitabha Lahiri}
\address{Amitabha Lahiri, S.~N.~Bose National Centre for Basic Sciences \\ Block JD,
  Sector III, Salt Lake, Kolkata 700098 \\
  West Bengal, India}
  \email{amitabhalahiri@gmail.com}

\author{Ambar N. Sengupta }
\address{Ambar N. Sengupta, Department of Mathematics\\
  Louisiana State University\\  Baton
Rouge, Louisiana 70803, USA}
\email{ambarnsg@gmail.com}

\keywords{Bundles, connection forms, path spaces, categorical geometry}

\subjclass[2010]{Primary: 53C05; Secondary: 18F}

\begin{abstract}  For a  principal bundle $P\to M$ equipped with a connection $\ovA$, we study an infinite dimensional bundle $\pdbap$ over the space of paths on $M$,  with the points of $\pdbap$ being horizontal paths on $P$ decorated with  elements of a second structure group.  We construct parallel transport processes on such bundles and study   holonomy bundles in this setting. We explain the relationship with categorical geometry and   explore the notion of categorical connections on categorical principal bundles in a concrete  differential geometric way.  \end{abstract}         

\maketitle

\section{Introduction}\label{s:int}

In this paper we explore  differential geometric aspects of categorical principal bundles. Such bundles were introduced in our previous work \cite{CLS2geom} from a category theoretic point of view. The present work, which can be read independently of \cite{CLS2geom}, is a contribution to  the  active literature at the juncture of category theory and differential geometry motivated by ideas in string theory and gauge theories.

The focus of our study is parallel transport on bundles whose elements are paths decorated with elements of a second structure group. We begin with a connection form $\ovA$ on a principal $G$-bundle $\pi:P\to M$, where $G$ is a Lie group, and consider first the structure
\begin{equation}\label{E:piovAbun}
\pi_{\ovA}:\pap\to\mpm:\ovg\mapsto \pi\circ\ovg,
\end{equation}
where $\mpm$ is the space of smooth paths  on $M$ and $\pap$ the space of $\ovA$-horizontal smooth paths on $P$; thus a point $\ovg\in\pap$ is a $C^\infty$ path $[t_0,t_1]\to P$ for which $\ovA_{\ovg(t)}\bigl(\ovg'(t)\bigr)=0$ for all $t\in [t_0,t_1]$.  Figure \ref{fig:Horizpath} illustrates this structure.

\begin{figure}[htbp]
\begin{center}
\resizebox{6cm}{!}{\input{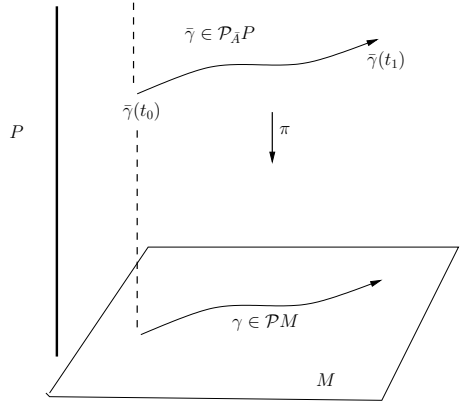}}
\caption{Horizontal paths}
\label{fig:Horizpath}
\end{center}
\end{figure}

The group $G$ acts on the space $\pap$   by right translations $\ovg\mapsto {\ovg}g$, and   the structure (\ref{E:piovAbun}) has the essential features of a principal $G$-bundle.  Next we introduce a   Lie group $H$ and a semidirect product $H\rtimes_{\alpha}G$, which serves as a `higher' structure group. Using these we construct a {\em decorated  bundle}
\begin{equation}\label{E:pdbapbun}
\pi_{\ovA}^d:\pdbap=\pap\times H\to\mpm:(\ovg, h) \mapsto \pi\circ\ovg,
\end{equation}
where we view each pair $(\ovg, h)$ as an $\ovA$-horizontal path $\ovg$ on $P$ {\em decorated} with an element $h$ drawn from the second structure group $H$. 
It is this structure, illustrated in Figure \ref{fig:decopath}, that is the focus of our work in this paper.  
\begin{figure}[htbp]
\begin{center}
\resizebox{6cm}{!}{\input{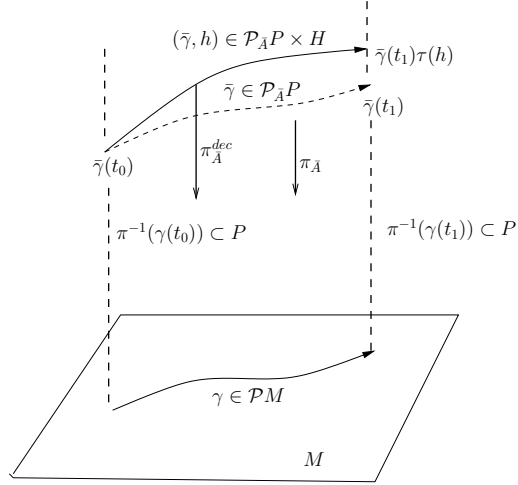}}
\caption{Decorated paths}
\label{fig:decopath}
\end{center}
\end{figure}

We prove results and explain how  this  structure can be viewed as a principal $H\times_{\alpha}G$-bundle.  Parallel transport on this bundle takes a path on $\mpm$ of the form $[s_0,s_1]\to\mpm:s\mapsto \Gamma_s$, represented by a mapping
$$[t_0,t_1]\times [s_0,s_1]\to M: (t,s)\mapsto \Gamma_s(t),$$
and associates to it a path on the decorated bundle  $\pdbap$ of the form
$$[s_0,s_1]\to \pdbap: s\mapsto ({\hat\Gamma}_s, h_s),$$
with specified initial value $({\hat\Gamma}_{s_0}, h_{s_0})$ projecting down to the initial path $\Gamma_{s_0}$ on the base manifold. This parallel transport process is obtained by using certain $1$- and $2$-forms  on $P$ with values in the Lie algebras $L(H)$ and $L(G)$. Given a suitable $1$-form on $P$ with values in the Lie algebra $L(H)$, we can associate, by a type of parallel transport process, a special element $h^*(\ovg)\in H$ for each path $\ovg\in \pap$; this selects out an element $\bigl(\ovg, h^*(\ovg)^{-1}\bigr)\in\pdbap$ for each $\ovg\in\pap$. We then determine, in section   \ref{s:reduct},  conditions on the $1$- and $2$-forms that ensure that parallel transport of a point of $\pdbap$ of the form $\bigl(\ovg, h^*(\ovg)^{-1}\bigr)$  produces an element of the same type.  This investigation is a study of the holonomy bundle for the decorated bundle (the holonomy bundle for a   connection on a traditional finite dimensional principal bundle is a central object in the foundational work of Ambrose and Singer \cite{AmSi1953}).

The background motivation for our work arises from trying to construct a gauge theory for strings joining point particles.  
There is an  active  literature in this area, much of it focused on category theoretic aspects.  In our recent works \cite{CLSpath,  CLS2geom} we have developed a category theoretic framework with focus on  differential geometric notions such as parallel transport over spaces of paths.  In the  present paper we establish a more concrete development of the theory of connections over spaces of paths, explaining the relationship with the category theoretic framework as well.

We have chosen a style of presentation this work intended to save  the reader  the task of  looking back frequently  to earlier and earlier sections for necessary notation and structure needed to understand the statement of a result or a new section; to this end we have often reviewed in summary form the earlier constructions and notation.

The literature in category theoretic geometry is too extensive to offer a  review here.  We mention here the works of  Abbaspour and Wagemann \cite{AbbasWag},  Attal \cite{Attal1, Attal2}, Baez et al.  \cite{BaezSchr, BaezWise}, Barrett \cite{Barr}, Bartels \cite{Bart}, Breen and Messing \cite{BM}, Parzygnat \cite{Parz2014}, Picken et al. \cite{CP, MarPick, MarPick2}, Soncini and Zucchini \cite{SZ}, Schreiber and Waldorf \cite{SW1, SW2} and Viennot \cite{Vien}. These investigations are motivated by the need to develop a `higher gauge theory' and much of the existing literature focuses on category theoretic structures.

\subsection{Results and organization of material}  All our constructions and results in this paper take as background a principal $G$-bundle $\pi:P\to M$, and a given set of connection forms and other forms on $P$.  We denote by $\pap$ the space of all paths on $P$ that are horizontal with respect to a connection $\ovA$ on $P$. The space $\pap$ is itself a principal $G$-bundle over the space $\mpm$ of paths on $M$, in a sense that we shall explain in section \ref{s:pbhor} . We study connections and parallel transport in the bundle $\pap$ and on related bundles. In this context we will  use other symmetry groups $H$ and $K$, along with the initial group $G$.  In a physical context the other symmetry groups would serve as structure groups for gauge theories for interacting strings (described by paths in our framework). 

We begin  in section \ref{s:pbhor} with a description of the bundle of horizontal paths, stating essential ideas, notation and basic results.  We show in Proposition \ref{P:isopullabar} that the bundle of horizontal paths can be understood in terms of a pullback bundle over the space of paths on the base space.  We describe local trivializations of the bundle of horizontal paths. 

In section \ref{s:connhor} we present a connection $\omega$ form on the bundle of horizontal paths. A special case of this type of connection form was studied in our earlier work  \cite{CLSpath}. We work out a differential equation (\ref{E:adotsas}) describing  parallel transport  with respect to this connection.  

We turn next, in section \ref{s:decbun}, to the notion of a {\em decorated bundle}, introduced in a categorical framework in our earlier paper \cite{CLS2geom}. A point on the decroated bundle is of the form
$$({\ovg}, h)$$
where ${\ovg}$ is an $\ovA$-horizontal path on $P$ and $h\in H$ is a `decorating' element attached to ${\ovg}$ (for example, $h$ might arise by integration of an $L(H)$-valued $1$-form along ${\ovg}$).   Here we study the decorated bundle
$$\pdbap\to\mpm$$
 from a differential geometric standpoint and construct local trivializations. The main result here is Proposition \ref{pr:gbundbaradec}, formally stating that $\pdbap$ is a principal $H\rtimes_{\alpha}G$-bundle with specified local trivializations.  The motivation for studying decorated bundles comes from a physics context; the connection $\ovA$ describes a gauge field coupling to point particles and the decorating element allows for coupling of an extended object or string, described by a path $\gamma$ on $M$, to a `higher gauge field' for which the structure group is $H$.

  In section \ref{s:condecbun} we construct a connection form $\Omega$ on the decorated bundle $\pdbap$. 
 In section \ref{s:horliftdec} we determine horizontal lifts with respect to the connection $\Omega$; thus we determine explicitly the equations for parallel transport in the decorated path bundle.
 
 Section \ref{s:catgeom} connects the differential geometric development of this paper to the category theoretic constructions. We describe the fundamental notions of categorical groups and categorical principal bundles. 
 
 Section \ref{s:reduct} is the culmination of all the developments in this paper. Here we make an extensive examination of parallel transport of decorated paths. We consider a special type of decoration of a path $\ovg$, where the decorating element in $H$  arises by means of integration of an $L(H)$ valued  $1$-form along $\ovg$. In Proposition \ref{P:reduction} we find   conditions under which the parallel transport of such a decorated path is itself decorated in the same manner.

\section{A principal bundle of horizontal paths}\label{s:pbhor}

We work with a principal $G$-bundle $\pi:P\to M$, where $G$ is a Lie group, and a connection form $\ovA$ on this bundle. Our focus is on a pair of path spaces, one a space  $\mpm$  of  paths on $M$ and the other a space $\pap$ of $\ovA$-horizontal paths on $P$. The projection map $\pi$ induces a corresponding projection 
$$ \pi_{\bar A}:\pap\to \mpm:{ \overline\gamma}\mapsto \pi\circ { \overline\gamma},$$
while the right action of $G$ on the bundle space $P$ induces a right action of $G$ on $\pap$ that preserves the fibers of $ \pi_{\bar A}$. It is this structure, clearly analogous to  a principal bundle, that we shall study. 
  Specifiying useful topologies and smooth structures on path spaces tend to be   unrewarding tasks, and so we will keep the involvement of such structures to a minimum and make no attempt at formulating or using any general framework for them. However, it is important to note what exactly the elements of $\mpm$ are. Unfortunately, even this requires a somewhat complex articulation of features that are intuitively quite clear. 
  
  The notation and definitions in this section are taken from our earlier works \cite{CLSpath,  CLS2geom}, to which the reader may refer for further details. However, our treatment here is self-contained.

   By a {\em  parametrized path}  on $M$ we mean a $C^\infty$ map $[t_0,t_1]\to M$, for some $t_0,t_1\in\mbr$ with $t_0<t_1$, that is constant near $t_0$ and near $t_1$.  Thus the set of all such paths is
   \begin{equation}\label{E:parmpath}
    \bigcup_{t_0,t_1\in\mbr, t_0<t_1}C_c^\infty\left([t_0,t_1]; M\right),\end{equation}
    where the subscript $c$ signifies the behavior near $t_0$ and $t_1$. If
    $\gamma_1\in C_c^\infty[t_0,t_1]$ and $\gamma_2\in C_c^\infty[t_1,t_2]$ then the composite $\gamma_2\circ\gamma_1$ belongs to $C_c^\infty[t_0,t_2]$.  It is often useful, at least for notational simplicity, to compose paths that are defined on the same parameter domain $[t_0, t_1]$.  With this in mind, we introduce the  quotient set $\mpm$ obtained by identifying paths that differ by a constant time-translation reparametrization. Thus, $\gamma:[t_0,t_1]\to M$ is identified with $\gamma_{+a}:[t_0-a,t_1-a]\to M:t\mapsto \gamma(t+a)$ in $\mpm$; this means that the parametrized paths $\gamma$ and $\gamma_{+a}$ correspond to the same element in $\mpm$.  We will usually not make a notational distinction between $\gamma$ and its equivalence class $[\gamma]$ of such time-translation reparametrizations. We will then use the term `path'  to refer to an equivalence class such as this. 
Many important constructions are invariant under a far larger class of reparametrizations but at this stage we find it more convenient to keep the reparametrizations to a minimum.

Following the notational convention for $\mpm$ we denoted by $\pap$ the set of all $\ovA$-horizontal parametrized paths on $P$, where $\ovA$ is our given connection form. Thus, an element ${ \overline\gamma}\in\pap$ is a $C^\infty$ mapping $[t_0,t_1]\to P$, for some $t_0<t_1$ in $\mbr$, such that
$$\ovA\bigl({ \overline\gamma}'(t)\bigr)=0\qquad\hbox{for all $t\in [t_0,t_1]$.}$$

We shall see now how a local trivialization of the bundle $\pi:P\to M$ gives rise to a local trivialization of the path bundle $\pi:\pap\to \mpm$.   To this end consider an open set $U\subset M$ and a smooth diffeomorphism
\Beq\label{homeo1}
\phi: U\times G\rightarrow \pi^{-1}(U)
\Eeq
that is $G$-equivariant in the sense that $\phi(u,gg')=\phi(u,g)g'$ for all $u\in $ and $g,g'\in G$.  
Associated to $U$ is the set $U^0$ of all paths that begin in $U$: $$U^0={\rm ev}_0^{-1}(U)$$
in the base path space, where ${\rm ev}_0$ gives the initial point of a path:
$$\gamma_0\stackrel{\rm def}{=} {\rm ev}_0(\gamma)\stackrel{\rm def}{=}\gamma(t_0).$$
  We view $U^0$ as an open subset of $\mpm$.  We can construct  a   diffeomorphism between $U^0$ and $U\times G$ by using the trivialization $\phi$; to understand this let 
  \begin{equation}\label{E:gammaAbarp}
  { \overline\gamma}^{\ovA}_{p }
  \end{equation}
  be the $\ovA$-horizontal path on $P$ that starts at $p$ and projects down to $\gamma$; thus
  $$\ovA\Bigl(\{{\overline\gamma}^{\ovA}_p\}'(t)\Bigr)=0\qquad\hbox{for all $t\in [t_0,t_1]$,}$$
  the projection down to the base manifold is
  $$\pi\circ {\overline\gamma}^{\ovA}_p=\gamma,$$
 and the initial point is $p$:
 $${\overline\gamma}^{\ovA}_p(t_0)=p.$$
  Then we define the map:
\begin{equation}\label{E:defphi0}
\phi^0:   U^0\times G\to \pi_{\bar A}^{-1}(U^0):(\gamma,g)\mapsto \phi^0(\gamma,g)\stackrel{\rm def}{=}{ \overline\gamma}^{\ovA}_{p }.
\end{equation}
 
 The mapping $\phi^0$ is $G$-equivariant and is clearly bijective as well.
 
Let us now determine the transition function between trivializations $\phi^0$ and $\psi^0$.  For trivializations $\phi:U\times G\to \pi^{-1}(U)$ and $\psi:V\times G\to \pi^{-1}(V)$ we have the transition function
 $$\theta_{\phi,\psi}:U\cap V\to G$$
 given by
 $$\psi(u,g)=\phi(u,g)\theta_{\phi,\psi}(u)\qquad\hbox{for all $(u,g)\in (U\cap V)\times G$.}$$
Then 
 \begin{equation}
 \begin{split}
\hbox{initial point of $ \psi^0(\gamma,g)$} &=\psi\bigl(\gamma_0, g\bigr)\\
 &=\phi\bigl(\gamma_0, g\bigr)\theta_{\phi,\psi}\bigl(\gamma_0\bigr)\\
 &=\hbox{initial point of $\phi^0(\gamma, g)\theta_{\phi,\psi}\bigl(\gamma_0\bigr)$.}
 \end{split}
 \end{equation}
 Thus the transition function between $\phi^0$ and $\psi^0$ is given by
\begin{equation}
\theta_{\phi^0,\psi^0}:U^0\cap V^0\to G:\gamma\mapsto \theta_{\phi,\psi} \bigl({\rm ev}_0(\gamma)\bigr).
\end{equation}
(Technically, we have not imposed a topology on the path space and so we do not have to verify continuity or smoothness of these transition functions.)
 
 In fact we can describe the bundle $\pi:\pap\to \mpm$ quite simply as a pullback of the bundle $\pi:P\to M$ under the evaluation map
 $${\rm ev}_0:\mpm\to M: \gamma\mapsto \gamma_0={\rm ev}_0(\gamma).$$
 To this end let
\begin{equation}\label{pullbund1}
{\rm ev}^{*}_0 P:=\{( \gamma, p)\in \mpm \times P\,|\,\gamma_0 =\pi(p)  \}.
\end{equation}
Thus a point of ${\rm ev}^{*}_0 P$ is specified by a point $p\in P$ along with a path $\gamma$ on $M$ that starts at $\pi(p)$.

The group $G$ acts on this space by 
$$( \gamma, p)g:=(\gamma, pg).$$
The mapping
$$\pi_0:{\rm ev}^{*}_0 P\to\mpm: ( \gamma, p)\mapsto\gamma $$
is a surjective projection for which 
$$\pi_0\bigl((\gamma)g, p\bigr)=\pi_0( \gamma, p)
$$
for all $p\in P$, $\gamma\in\mpm$ and $g\in G$.

\begin{prop}\label{P:isopullabar}
The mapping
\begin{equation}\label{E:defmu}
\mu: {\rm ev}^{*}_0 P\rightarrow {\mathcal P}_{\bar A}P:(\gamma,p)\mapsto{ \overline\gamma}{\ovA}_p,
\end{equation}
where ${ \overline\gamma}^{\ovA}_p$ is the $\ovA$-horizontal lift of $\gamma$ initiating at $p\in P$, is
   a   $G$-equivariant bijection $\mu:{\rm ev}^{*}_0 P\rightarrow {\mathcal P}_{\bar A}P$. The diagram
\begin{equation} \label{dia:comiso}
\xymatrix{
         \ar[d]^{\pi_0} {{\rm ev}^{*}_0 P}      \ar[r]^-{\mu} & \pap\ar[d]_{\pi_{{\ovA}}} \\
\mpm\ar[r]_-{{\rm id}}& \mpm
}
\end{equation}

commutes.
\end{prop}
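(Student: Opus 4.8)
The plan is to verify the three claimed properties of $\mu$ — $G$-equivariance, bijectivity, and commutativity of the diagram — essentially by unwinding the definitions, since the real content (existence and uniqueness of horizontal lifts with prescribed starting point) is the classical theory of connections on the finite-dimensional bundle $\pi:P\to M$.

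First I would record the basic fact, standard for connections on principal bundles, that for each $C^\infty$ path $\gamma:[t_0,t_1]\to M$ and each $p\in P$ with $\pi(p)=\gamma(t_0)=\gamma_0$, there is a unique $\ovA$-horizontal lift $\ovg^{\ovA}_p:[t_0,t_1]\to P$ with $\ovg^{\ovA}_p(t_0)=p$ and $\pi\circ\ovg^{\ovA}_p=\gamma$; moreover, since $\gamma$ is constant near $t_0$ and near $t_1$, the horizontal lift is also constant near the endpoints, so $\ovg^{\ovA}_p\in\pap$ and the map $\mu$ is well-defined on ${\rm ev}^{*}_0 P$. (One should also note that $\mu$ descends to the time-translation equivalence classes, since a time-translated path has time-translated horizontal lift.) The commutativity of the diagram \eqref{dia:comiso} is then immediate: $\pi_{\ovA}\bigl(\mu(\gamma,p)\bigr)=\pi\circ\ovg^{\ovA}_p=\gamma=\pi_0(\gamma,p)$.

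Next I would check $G$-equivariance. For $g\in G$, the path $\ovg^{\ovA}_p\cdot g$ is $\ovA$-horizontal (because $\ovA$ is $\operatorname{Ad}$-equivariant and the right $G$-action sends horizontal vectors to horizontal vectors), it projects to $\gamma$ since $\pi$ is $G$-invariant, and it starts at $pg$; by the uniqueness of horizontal lifts it therefore equals $\ovg^{\ovA}_{pg}$. Hence $\mu\bigl((\gamma,p)g\bigr)=\mu(\gamma,pg)=\ovg^{\ovA}_{pg}=\ovg^{\ovA}_p\cdot g=\mu(\gamma,p)g$, which is exactly $G$-equivariance.

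Finally, bijectivity. For surjectivity, given any $\ovg\in\pap$, set $\gamma=\pi\circ\ovg\in\mpm$ and $p=\ovg(t_0)$; then $\ovg$ is an $\ovA$-horizontal lift of $\gamma$ starting at $p$, so by uniqueness $\ovg=\ovg^{\ovA}_p=\mu(\gamma,p)$, and $(\gamma,p)\in{\rm ev}^{*}_0 P$ since $\pi(p)=\gamma_0$. For injectivity, if $\mu(\gamma,p)=\mu(\gamma',p')$ then applying $\pi$ gives $\gamma=\gamma'$, and evaluating at the initial parameter gives $p=p'$. This completes the proof. The only point requiring any care — and the step I expect to be the mild obstacle — is the bookkeeping around the parameter domains and the time-translation equivalence, i.e.\ confirming that $\mu$ is compatible with the identifications defining $\mpm$ and $\pap$ and that "initial point" and "projection" are well-defined on equivalence classes; everything else is a direct appeal to the uniqueness of horizontal lifts.
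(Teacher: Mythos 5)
Your proposal is correct and follows essentially the same route as the paper's own proof: both arguments rest on existence and uniqueness of $\ovA$-horizontal lifts to get well-definedness and bijectivity (with the inverse given by $\ovg\mapsto(\pi\circ\ovg,\ovg(t_0))$), equivariance of horizontal lifts for the $G$-equivariance, and the definition of $\mu$ for the commutativity of the diagram. Your added checks --- that the lift is constant near the endpoints and that $\mu$ respects the time-translation identifications --- are points the paper leaves implicit, but they do not change the argument.
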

Let us note that $\pi_{\ovA}:\pap\to\mpm$ is a principal $G$-bundle (in the sense that the projection $\pi$ is a surjection and  the group $G$ acts freely and transitively on each fiber of $\pi$); since the base and bundle spaces are both path spaces it might seem at first that the structure group is infinite dimensional but in fact it is just $G$ because we are focusing on the $\ovA$-horizontal paths. 
\begin{proof}
Consider any $(\gamma, p)\in {\rm ev}^{*}_0 P$; then by definition $p\in \pi^{-1}(\gamma(0))$. Hence we 
can horizontally lift the path $\gamma$ on $M$ to $P$ by $\bar A$ to obtain the $\bar A$-horizontal 
path ${\ovg}^{\bar A}_p$ starting from $p\in P$ and that path is unique. On the other hand, any element $\ovg\in {\mathcal P}_{\bar A}P$ is the image under $\mu$ of  $\bigl(\gamma,  { \overline\gamma}_0 \bigr)\in  {\rm ev}^{*}_0 P$, where $\gamma= \pi_{\bar A}({\ovg})\in\mpm$.
Thus   $$\mu:( \gamma, p)\mapsto {\ovg}_p $$ 
is a bijection. Since horizontal lifts behave equivariantly under the action of $G$, the mapping $\mu$ is $G$-equivariant.  The definition of $\mu$ also implies directly that the diagram \eqref{dia:comiso} is commutative.
\end{proof}

We define a {\em tangent vector} $\ovv$ at ${\ovg}\in {\mathcal P}_{\bar A}P$ with the following description:
\begin{itemize}
\item[(i)] $\ovv$ is a $C^\infty$ vector field along $\ovg$,
\item[(ii)] ${\ovv}$ satisfies {\em tangency condition}
\Beq\label{E:tang}
\frac{\partial {\bar A}({\ovv}(t))}{\partial t}=F^{\bar A}({\ovg}'(t),{\ovv}(t)),
\Eeq
for all $t\in [t_0,t_1]$, where $[t_0,t_1]$ is the domain of ${ \ovg}$, and
\item[(iii)] $\ovv$ is constant near $t_0$ and near $t_1$.
\end{itemize}
 To be more precise two such vector fields are viewed as the same tangent vector if they differ by reparametrization by a translation as in the discussion following (\ref{E:parmpath}). We denote the set of all such vector fields by
$$T_{ \overline\gamma}\pap.$$
The linear nature of the differential equation (\ref{E:tang}) implies that this tangent space is indeed a vector space (closed under addition and scaling).

 There is a unique tangent vector   $\ovv\in T_{\ovg}\pap$ with a specified projection vector field $v=\pi_*{ \overline v}$ and   initial value ${ \overline v}(t_0)$. We review the proof from  \cite[Lemma 2.1] {CLSpath}. Let  $ {\tilde v}(t)^h$ be the $\ovA$-horizontal vector  in $  T_{\ovg(t)}P$ that  projects by $\pi_*$ to $v(t)$:
 \begin{equation}
 \ovA\bigl({\tilde v}(t)^h\bigr)=0\quad\hbox{and}\quad \pi_*{\tilde v}(t)^h=v(t).
 \end{equation}
 Now let
 $$Z(t)=  \ovA\bigl(\ovv(t_0)\bigr)+  \int_{t_0}^t F^{\ovA}\bigl(\ovg'(u), {\tilde v}(u)\bigr)\,du\in L(G).$$
   Now consider the vector field $\ovv$ along $\ovg$ specified by
  \begin{equation}\label{E:uniquelift}
\ovv(t)= {\tilde v}(t)^h+\ovg(t)Z(t)
 \end{equation}
 for all $t\in [t_0,t_1]$. Then 
 \begin{equation}\label{E:ovAtZt}
 \ovA\bigl(\ovv(t)\bigr)=Z(t)=  \ovA\bigl(\ovv(t_0)\bigr)+  \int_{t_0}^t F^{\ovA}\bigl(\ovg'(u), {\tilde v}(u)\bigr)\,du.
 \end{equation}
 Thus the differential equation (\ref{E:tang}) holds. Moreover, if $\ovv$ is any vector field along $\ovg$ projecting down to $v$ and   satisfying the differential equation  (\ref{E:tang}) then the relation (\ref{E:ovAtZt}) holds and so $\ovv(t)$ is given by (\ref{E:uniquelift}).  Because $\gamma$ is constant near $t_0$ and $t_1$ we see that $Z$ is constant near these endpoints. Similarly $v$ and ${\tilde v}$ are also constants near the ends, and hence so is $\ovv$.

 Let us see how this is consistent with the pullback point of view in Proposition \ref{P:isopullabar}. If ${ \overline v}_0\in T_pP$, where $p={ \overline\gamma}(t_0)$ and if $v$ is a smooth vector field along $\gamma=\pi_{\ovA}({ \overline\gamma})$, viewed as a vector in $T_{\gamma}\mpm$, with initial value $v(t_0)=d\pi|_{p}{ \overline v}_0$, then 
\begin{equation}\label{E:Dmu}
d\mu\big|_{(\gamma,p)}(v, { \overline v}_0)={ \overline v},
\end{equation}
where the derivative $d\mu$ is taken in a formal but natural sense; more officially, we can take (\ref{E:Dmu}) as defining $d\mu$.

Working with  the map $\mu$ that identifies $\pap$ with the pullback bundle ${\rm ev}_0^*P$ it is possible to construct   local trivializations of $\pi_{\ovA}:\pap\to\mpm$ from those of $\pi:P\to M$, and these coincide with the type given in (\ref{E:defphi0}).

\subsection{Changing the base connection $\pap$} We have been working with a fixed connection $\ovA$ on the principal $G$-bundle $\pi:P\to M$ and using this we have defined the path bundle $\pap\to\mpm$.  Changing $\ovA$ to another connection 
$ \ovA'$
 produces a bundle ${\mathcal P}_{\ovA'}P \to\mpm$. We show now that this is `isomorphic' to $\pap\to\mpm$; this is, of course, quite natural from the point of view of Proposition \ref{P:isopullabar}.  For the following result let us recall  from (\ref{E:tang}) that the tangent space
 $$T_{\ovg}\pap$$
 consists of all vector fields $\ovv$ along $\ovg\in\pap$, constant near the initial and final points,  that satisfy the condition
 \begin{equation}\label{E:tang2}
 \frac{\partial \ovA\bigl(\ovv(t)\bigr)}{\partial t}=F^{\ovA}\bigl(\ovg'(t), \ovv(t)\bigr)\qquad\hbox{for all $t\in [t_0,t_1]$,}
 \end{equation}
 where $F^{\ovA}$ is the curvature form of $\ovA$.

  \begin{prop}\label{P:isomor} Let $\ovA$ and $\ovA'$ be   connections on a principal $G$-bundle $\pi:P\to M$, and $C$ an $L(G)$-valued Ad-equivariant $1$-form on $P$ that vanishes on vertical vectors.  Let  $\pap$ be the set of all paths on $P$ that are $\ovA$-horizontal and $\papp$ the set of all $\ovA'$-horizontal paths.  For each $\ovg\in\pap$ let $\mct(\ovg)$ be the path on $P$ that is $\ovA'$-horizontal, has the same initial point as $\ovg$, and projects down to the same path $\pi\circ\ovg$ on $M$ as $\ovg$. Then the mapping
 \begin{equation}
\label{functF}
\mct:\pap\to\papp:\ovg\mapsto {\mct}(\ovg)
\end{equation} 
is a bijection. For any vector field $\ovv$ along   $\ovg$ that belongs to the tangent space $T_{\ovg}\pap$ let 
\begin{equation}\label{E:Tstarv}
\mct_*\ovv\in T_{\mct(\ovg)}(\papp),
\end{equation}
 be the vector field along $\mct(\ovg)$ whose initial value is $\ovv(t_0)$ and whose projection by $\pi_*$ is the vector field $\pi_*\circ \ovv$ along the path $\gamma=\pi\circ\ovg\in\mpm$.  Then
 \begin{equation}\label{E:Tstarvhor}
 \mct_*(\ovv)(t) - \ovv(t)g_{\ovg}(t)
 \end{equation}
 is a vertical vector, where $g_{\ovg}(t)$ is the element of $G$ for which
 $$\mct(\ovg)(t)=\ovg(t)g_{\ovg}(t).$$
   \end{prop}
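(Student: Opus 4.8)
The plan is to establish the three claims in order: that $\mct$ is a bijection, that $\mct_*\ovv$ as described is a well-defined element of the tangent space, and finally the vertical-vector statement \eqref{E:Tstarvhor}. For the bijection, I would argue exactly as in Proposition \ref{P:isopullabar}: both $\pap$ and $\papp$ are identified (via $\mu$ and the analogous $\mu'$) with the pullback bundle ${\rm ev}_0^*P$, since an $\ovA$-horizontal path is uniquely determined by its initial point and its projection to $M$, and likewise for $\ovA'$-horizontal paths. Under these identifications $\mct$ is nothing but $\mu'\circ\mu^{-1}$, a composite of bijections, hence a bijection. (Concretely, the inverse of $\mct$ is the analogous construction reconstructing the $\ovA$-horizontal path from the initial point and projection of an $\ovA'$-horizontal path.)

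For the tangent vector $\mct_*\ovv$: by the uniqueness statement proved after Proposition \ref{P:isopullabar} — there is a unique vector field in $T_{\mct(\ovg)}\papp$ with a prescribed projection along $\pi\circ\ovg$ and a prescribed initial value — the object described in \eqref{E:Tstarv} exists and is unique, once we check that the prescribed initial value $\ovv(t_0)$ projects correctly. But $\ovg$ and $\mct(\ovg)$ share the initial point, so $d\pi$ sends $\ovv(t_0)$ to $(\pi_*\ovv)(t_0)$, which is the initial value of the chosen projection field; so the data are compatible and $\mct_*\ovv$ is well-defined. (One should note that the tangency condition for $\mct_*\ovv$ is relative to $\ovA'$ and its curvature $F^{\ovA'}$, whereas that for $\ovv$ is relative to $\ovA$; this is automatically built into the uniqueness statement applied to the bundle $\papp$, so nothing extra is needed.)

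The substantive part is \eqref{E:Tstarvhor}. Write $w(t) = \mct_*(\ovv)(t) - \ovv(t)g_{\ovg}(t)$, where the second term is the pushforward of $\ovv(t)$ by right translation $R_{g_{\ovg}(t)}$ — more carefully, differentiating $t\mapsto\mct(\ovg)(t) = \ovg(t)g_{\ovg}(t)$ and the curve of tangent vectors one obtains $\ovv(t)g_{\ovg}(t)$ as the natural candidate for "the image of $\ovv(t)$", plus a vertical correction coming from $g_{\ovg}'(t)$; I will be careful here and simply define the comparison so that \eqref{E:Tstarvhor} is the precise residual. The strategy is: both $\mct_*(\ovv)(t)$ and $\ovv(t)g_{\ovg}(t)$ project by $\pi_*$ to the same vector in $T_{\gamma(t)}M$ — for the first by the defining property of $\mct_*\ovv$, for the second because $\pi_*$ is $G$-invariant and $\pi_*\ovv(t) = (\pi_*\ovv)(t)$ is, by hypothesis, the common projection. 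Hence $w(t)$ is $\pi_*$-vertical, i.e. lies in the vertical subspace of $T_{\mct(\ovg)(t)}P$, which is exactly the claim.

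The main obstacle, and the one point deserving genuine care, is the bookkeeping in the term $\ovv(t)g_{\ovg}(t)$: one must pin down precisely whether this denotes $R_{g_{\ovg}(t)*}\ovv(t)$ with $g_{\ovg}$ frozen, and whether the difference in \eqref{E:Tstarvhor} is literally $\pi_*$-vertical or only vertical up to the derivative-of-$g$ term (which is itself vertical, so the conclusion survives either way). I expect the cleanest route is to verify at each fixed $t$ that $\pi_*$ annihilates the difference — a one-line computation using $\pi_* R_{g*} = \pi_*$ and the two defining projection properties — rather than to compare the full tangency ODEs, which would force one to relate $F^{\ovA}$, $F^{\ovA'}$ and $C$ and is unnecessary for this particular statement.
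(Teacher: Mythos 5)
Your proposal is correct and follows essentially the same route as the paper: the bijection comes from uniqueness of horizontal lifts given the initial point and the base path (your pullback-bundle phrasing is just a repackaging of this), and the verticality of $\mct_*(\ovv)(t)-\ovv(t)g_{\ovg}(t)$ is proved, as in the paper, by noting that $\pi_*\circ (R_g)_*=\pi_*$ so both vectors project to $\pi_*(\ovv(t))$. The paper likewise reads $\ovv(t)g_{\ovg}(t)$ as $(R_{g_{\ovg}(t)})_*\ovv(t)$ with $g_{\ovg}(t)$ frozen, so your cautionary remark resolves exactly as you expect.
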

   
   It is natural to think of $ \mct_*(\ovv)$ as the image of $v\in T_{\ovg}\pap$ under the `derivative' of $\mct$. In more detail, suppose
   $$\tlG:[t_0,t_1]\times [s_0,s_1]\to P: (t,s)\mapsto\tlG_s(t)$$
   is a $C^\infty$ map for which $\ovg=\pi\circ\tlG_{s_0}$ and 
   $$\ovv(t)=\partial_s{\tlG}_s\big|_{s=s_0}(t)\qquad\hbox{for all $t\in [t_0,t_1]$.}$$
    This displays the vector field $\ovv\in T_{\ovg}\pap$ as the `tangent vector' to a path $s\mapsto\tlG_s$ on $\pap$.  Then the image of $\ovv$ under the derivative of $\mct$ at $\ovg$ should be the tangent, at $s=s_0$, of the image path
    $$s\mapsto \mct(\tlG_s).$$
    This tangent is the vector field along $\mct(\ovg)$ given by
    $$t\mapsto w(t)\stackrel{\rm def}{=} \partial_s \mct(\tlG_s)(t)\Big|_{s=s_0}.$$
    Focussing on the initial `time' $t=t_0$ we have
    $$w(t_0)= \partial_s{\tlG}_s(t_0)\Big|_{s=s_0}$$
    because $\mct(\tlG_s)(t_0)=\tlG_s(t_0)$ by definition of  the mapping $\mct$. Thus
    $$w(t_0)= \ovv(t_0).$$
   Thus $w\in T_{\mct(\ovg)}\papp$, being uniquely  determined by the initial value $w(t_0)$ and the projection $\pi_*\circ w= \pi_* \ovv\in T_{\pi\circ\ovg}M$, is exactly $\mct_*(\ovv)$ as we have defined it above.

 \begin{proof} Horizontal lift  of a path by a connection is uniquely determined by the initial point of the lifted path. Using this we see that $\ovg$ is determined uniquely when $\mct(\ovg)$ is known. Thus $\mct$ is a bijection.
 
 The right action mapping
 $$R_g: P\to P:p\mapsto pg,$$
 for any fixed $g\in G$,  preserves fibers:
 $$\pi\circ R_g(p)=\pi(p)\qquad\hbox{for all $p\in P$.}$$
 Taking the derivative at $p$ on any vector $v\in T_pP$ we then have
 \begin{equation}
 \pi_*|_{pg}\Bigl((R_g)_*|_pv\Bigr)=\pi_*|_pv
 \end{equation}
 for all $v\in T_pP$ and all $p\in P$. Our notation $vg$ means simply $(R_g)_*|_pv$:
 $$vg\stackrel{\rm def}{=}(R_g)_*|_pv.$$
 Hence
 $$\pi_*(vg)=\pi_*(v).$$
 Next from the definition of  $\mct_*(\ovv)(t)$ we know that its projection by $\pi_*$ is the vector $\pi_*(\ovv(t))\in T_{\pi\circ\ovg(t)}M$. Thus
 the vectors $\mct_*(\ovv)(t)$ and $\ovv(t)g_{\ovg}(t)$ in $T_{\mct(\ovg)(t)}P$ both project down by $\pi_*$ to the vector $\pi_*(\ovv(t))\in T_{\pi\circ\ovg(t)}M$. Hence the difference $ \mct_*(\ovv)(t) - \ovv(t)g_{\ovg}(t)$ is a vertical vector.
 \end{proof}
  
\subsection{Pullback of forms} We continue with the comparison of the horizontal path spaces $\pap$ and $\papp$ using the mapping $\mct$. We define pullbacks in the natural way: if ${\tilde D}$ is a $k$-form on $\papp$, with values in some vector space, then $\mct^*{\tilde D}$ is the $k$-form on $\pap$ given by
\begin{equation}\label{E:kformpullback}
(\mct^*{\tilde D})({\tilde v}_1,\ldots, {\tilde v}_k)={\tilde D}(\mct_*{\tilde v}_1,\ldots, \mct_*{\tilde v}_k).
\end{equation}
For example, suppose $B$ is $2$-form on $P$ with values in some vector space. Consider then the $2$-form ${\tilde B}$ on $\papp$ given by
\begin{equation}
{\tilde B}_{{\tilde\gamma}}({\tilde v}, {\tilde w})=\int_{t_0}^{t_1}B_{{\tilde\gamma}(t)}\bigl({\tilde v}(t), {\tilde w}(t)\bigr)\,dt
\end{equation}
for all ${\tilde\gamma}\in\papp$ and ${\tilde v}, {\tilde w}\in T_{{\tilde\gamma}}\papp$. Then 
\begin{equation}\label{E:tildB}
\bigl(\mct^*{\tilde B}\bigr)|_{\ovg}(\ovv, \ovw)=\int_{t_0}^{t_1}B_{\mct(\ovg)}\bigl((\mct_*{\ovv})(t), (\mct_*{\ovw})(t)\bigr)\,dt.
\end{equation}
We can also pullback the $1$-form on $\papp$ given by the Chen integral
\begin{equation}\label{E:Bchen}
B^{\rm ch}_{\tlg}(\tlv)\stackrel{\rm def}{=}\int_{t_0}^{t_1}B\bigl(\tlg'(t), \tlv(t)\bigr)\,dt
\end{equation}
to obtain the $1$-form $\mct^*B^{\rm ch}$ given by
\begin{equation}\label{E:Bchen2}
(\mct^*B^{\rm ch})_{\ovg}(\ovv)\stackrel{\rm def}{=}\int_{t_0}^{t_1}B\bigl(\ovg'(t), \ovv(t)\bigr)\,dt
\end{equation}
As another example, for a $1$-form $C$ on $P$, with values in some vector space, we have a $1$-form ${\tilde C}_0$ on $\papp$ given by
$${\tilde C}_0|_{{\tilde\gamma}}({\tilde v})=C_0|_{{\tilde\gamma}(t_0)}\bigl({\tilde v}(t_0)\bigr)$$
and then the pullback $\mct^*{\tilde C}_0$ is given by
\begin{equation}\label{E:tildC}
\bigl(\mct^*{\tilde C}_0\bigr)|_{\ovg}(\ovv)= {\tilde C}_0|_{\mct({\tilde\gamma})(t_0)}\bigl((\mct_*\ovv)(t_0)\bigr),
\end{equation}
for all $\ovg\in\pap$ and $\ovv\in T_{\ovg}\pap$.

 \section{A connection form on the space of horizontal paths}\label{s:connhor}

 We continue working with a principal $G$-bundle
 $$\pi:P\to M$$
 equipped with a connection form $\ovA$, and the corresponding projection map
 $$\pi_{\ovA}:\pap\to\mpm: { \overline\gamma}\mapsto \pi\circ { \overline\gamma},$$
 where $\mpm$ is the  space of smooth paths on $M$ and $\pap$ the  space of smooth horizontal paths in $P$. Here and always we identify a parametrized path $\gamma:[t_0,t_1]\to X$, in a space $X$, with the reparametrized path
 $[t_0+a,t_1+a]\to X:t\mapsto \gamma(t-a)$, for any $a\in\mbr$, so that they correspond to the same element in the path space over $X$.  More technically, we  work with the  space obtained by quotienting the space of parametrized paths by all the constant translations of the `time' parameter domain.
  
 In the preceding section we have seen how $\pi_{\ovA}:\pap\to\mpm$ can be viewed, in a reasonable sense, as a principal $G$-bundle. Now we turn to a description of a $1$-form $\om$ on $\pap$ (the sense in which this is a $1$-form will become clear) that essentially provides a connection form on this path space bundle.

\subsection{The $2$-form ${B_0}$ and a connection form $A$}
Henceforth we will work with an $L(G)$-valued $2$-form ${B_0}$ on $P$ that has the following two special properties:
\begin{itemize}
\item[(i)] ${B_0}$ is ${\rm Ad}$-equivariant in the sense that
$${B_0}|_{pg}(vg, wg)  ={\rm Ad}(g^{-1}){B_0}(v, w),$$
for all $p\in P$ and all $v, w\in T_pP$;
\item[(ii)]  ${B_0}$  is horizontal in the sense that
$${B_0}(v, w)  =0$$
whenever $v$ or $w$ is a vertical vector (a vector that projects by $\pi_*$ to $0$).
\end{itemize}
Thus ${B_0}$ satisfies 
\Beq\label{alequi}
\begin{split}
 {B_0}|_{pg}(vg, wg) &={\rm Ad}(g^{-1}){B_0}(v, w), \qquad \forall v, w\in T_pP, g\in G,\\
 {B_0}(v, w) &=0, \qquad {\rm if}\hskip 0.1 cm  v\hskip 0.1 cm {\rm or}\hskip 0.1 cm w \hskip 0.1 cm{\rm is \hskip 0.1 cm vertical}
\end{split}
\Eeq
at all points $p\in P$.

As our final ingredient, let $A$ be a connection form on the $G$-bundle $\pi:P\to M$. 

\subsection{The  form $\oab$ on $\pap$}
We define an $L(G)$-valued $1$-form $\omega^{(A, {B_0})}$ on ${\mathcal P}_{\bar A}P$
by
\Beq\label{conn}
\omega^{(A, {B_0})}_{\overline \gamma}({\overline v}):=A\bigl({\overline v}(t_0))+  \int_{t_0}^{t_1}{B_0}({\overline v}(t), {\overline \gamma}'(t)\bigr)\, dt,
\Eeq
where ${\overline v}\in T_{{\overline \gamma}}{\mathcal P}_{\bar A}P$.   

\subsection{The connection form $\om$ on $\pap$}
In $\oab$ we have given a special role to the left endpoint $\ovg(t_0)$; this could, however,  be replaced by the right endpoint $\ovg(t_1)$.  In fact a slightly more general construction leads to a $1$-form with dependence on both endpoints.

Adding on a term depending on the right endpoint value $\ovv(t_1)$ through the $1$-form ${C^R_0}$ we have the $L(G)$-valued $1$-form $\omega=\omega^{(A,{B_0}, {C^L_0}, {C^R_0})}$ on ${\mathcal P}_{\bar A}P$
 given by
 \Beq\label{E:defomabcr}
\omega_{\overline \gamma}({\overline v}):=A_{\ovg(t_0)}\bigl({\overline v}(t_0)\bigr)+{C^R_0}|_{\ovg(t_1)}\bigl({\overline v}(t_1)\bigr)-{C^L_0}|_{\ovg(t_0)}\bigl({\overline v}(t_0)\bigr)+  \int_{t_0}^{t_1}{B_0}|_{\ovg(t)}\bigl({\overline v}(t), {\overline \gamma}'(t)\bigr)\, dt.
\Eeq
Thus
\begin{equation}\label{E:ooabc}
\omega=\oab+{\rm ev}_{1}^*{C^R_0}-{\rm ev}_{0}^*{C^L_0},
\end{equation}
where ${\rm ev}_{0}$ and ${\rm ev}_{1}$ are the evaluations at the left and the right endpoints respectively. The forms ${C^L_0}$ and ${C^R_0}$ on $P$  take values in $L(G)$, vanish on vertical vectors and are equivariant:
\begin{equation}\label{E:CLRequiv}
C^{L, R}_0|_{pg}(vg)={\rm Ad}(g^{-1})C^{L, R}_0|_p(v)
\end{equation}
for all $p\in T_pP$, $v\in T_pP$ and $g\in G$.

The additional terms in (\ref{E:ooabc}) allow  us to include counterparts of $\oab$ that have a right endpoint term in place of the left endpoint term $A\bigl({\overline v}(t_0)\bigr)$ by taking
\begin{equation}
 {C^L_0}={C^R_0}= A-\ovA 
\end{equation}
and replacing $B_0$ by $F^{\ovA}+B_0$ leads to the counterpart of $\oab$ involving the right endpoint in place of the left endpoint.

\begin{prop}\label{pr:connection}
The $1$-form $\omega$ on the principal $G$-bundle $ \pi_{\bar A}: {\mathcal P}_{\bar A}P\to \mpM $ has the following properties:
\begin{itemize}
\item[(i)] 
\begin{equation}\label{E:oABvg}
\omega({ \overline v}g)={\rm Ad}(g^{-1})\omega({ \overline v})
\end{equation}
for all $g\in G$ and all  vector fields ${ \overline v}\in T_{{ \overline\gamma}}\pap$ and all ${ \overline\gamma}\in\pap$;
\item[(ii)] If $Y$ is any element of the Lie algebra $L(G)$ and ${ \overline Y}$ is the vector field along ${ \overline\gamma} $ given by ${ \tilde Y}(t)=\frac{d}{du}\big|_{u=0}{ \overline\gamma}(t)\exp(uY)$, then
\begin{equation}\label{E:oABvert}
\omega({ \tilde Y})=Y.
\end{equation}
\end{itemize}
\end{prop}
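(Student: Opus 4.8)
The plan is to verify the two asserted properties directly from the explicit formula \eqref{E:defomabcr} defining $\omega$, using the equivariance and verticality hypotheses on $A$, $B_0$, $C^L_0$, $C^R_0$, and the behaviour of the right $G$-action on tangent vectors established in the discussion preceding \eqref{E:tang2} and in Proposition \ref{P:isomor}. The key preliminary observation is that for $\ovg\in\pap$ and $\ovv\in T_{\ovg}\pap$, the translated path $\ovg g$ has tangent vector field $(\ovv g)(t) = \ovv(t)g$ (the pushforward by $R_g$ applied pointwise), so that evaluating $\omega_{\ovg g}(\ovv g)$ term by term reduces each summand to a pointwise application of an equivariant form on $P$.

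For part (i), I would compute $\omega_{\ovg g}(\ovv g)$ directly: the endpoint term $A_{\ovg(t_0)g}\bigl(\ovv(t_0)g\bigr)$ equals ${\rm Ad}(g^{-1})A_{\ovg(t_0)}\bigl(\ovv(t_0)\bigr)$ by equivariance of the connection form $A$; the two terms involving $C^R_0$ and $C^L_0$ transform the same way by \eqref{E:CLRequiv}; and the integrand ${B_0}|_{\ovg(t)g}\bigl(\ovv(t)g, \ovg'(t)g\bigr)$ equals ${\rm Ad}(g^{-1}){B_0}|_{\ovg(t)}\bigl(\ovv(t), \ovg'(t)\bigr)$ by the ${\rm Ad}$-equivariance in \eqref{alequi}, here using that $(\ovg g)'(t) = \ovg'(t)g$. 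Since ${\rm Ad}(g^{-1})$ is linear it pulls out of the integral, and summing the four terms gives ${\rm Ad}(g^{-1})\omega_{\ovg}(\ovv)$, as required.

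For part (ii), I would first note that for $\tilde Y(t) = \frac{d}{du}\big|_{u=0}\ovg(t)\exp(uY)$, the vector $\tilde Y(t)$ is the vertical vector at $\ovg(t)$ generated by $Y\in L(G)$, so $\pi_*\tilde Y = 0$ and hence $\tilde Y$ is the unique element of $T_{\ovg}\pap$ with zero projection and initial value $\tilde Y(t_0)$ (one should check $\tilde Y$ satisfies the tangency condition \eqref{E:tang2}, which holds because $F^{\ovA}$ vanishes on vertical vectors and $\ovA(\tilde Y(t)) = Y$ is constant in $t$). Then I evaluate each term of \eqref{E:defomabcr}: $A_{\ovg(t_0)}\bigl(\tilde Y(t_0)\bigr) = Y$ since $A$ reproduces Lie algebra elements on their fundamental vector fields; the terms ${C^R_0}|_{\ovg(t_1)}\bigl(\tilde Y(t_1)\bigr)$ and ${C^L_0}|_{\ovg(t_0)}\bigl(\tilde Y(t_0)\bigr)$ both vanish since $C^L_0, C^R_0$ annihilate vertical vectors; and the integral term vanishes since ${B_0}$ is horizontal and $\tilde Y(t)$ is vertical. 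Hence $\omega(\tilde Y) = Y$.

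The computations are routine once the pointwise description of the $G$-action on tangent vectors is in hand; the only mild subtlety — and the one point I would write out carefully — is confirming that $\tilde Y$ in part (ii) genuinely lies in $T_{\ovg}\pap$, i.e. that it satisfies the tangency condition \eqref{E:tang2}, since the statement implicitly asserts this by calling $\tilde Y$ a vector field along $\ovg$ of the admissible type. I do not expect any genuine obstacle beyond bookkeeping.
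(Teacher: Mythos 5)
Your proposal is correct and follows essentially the same route as the paper's own (sketched) proof: property (i) is obtained term by term from the ${\rm Ad}$-equivariance of $A$, $B_0$, $C^{L}_0$, $C^{R}_0$, and property (ii) from the fact that all terms except $A\bigl(\ovv(t_0)\bigr)$ annihilate vertical vectors, with $A$ returning $Y$. Your additional verification that $\tilde Y$ satisfies the tangency condition \eqref{E:tang2} (so that it genuinely lies in $T_{\ovg}\pap$) is a worthwhile point of care that the paper leaves implicit, but it does not change the argument.
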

The property  (\ref{E:oABvert}) can also be written as:
\begin{equation}\label{E:oABvert2}
\omega_{\ovg}(\ovg Y)=Y \qquad\hbox{for all $Y\in L(G)$ and $\ovg\in\pap$.}
\end{equation}

In  \cite[Proposition 2.2 ]{CLSpath} we have established the preceding result with ${C^R_0}=0$. The proof is simple, so we present a quick sketch here. The equivariance property (i) holds for each of the terms on the right in the definition of $\omega$ in (\ref{E:defomabcr}) and so it holds for $\omega$. Next, applying $\omega$ to the vector ${ \tilde Y}\in T_{\ovg}\pap$ all terms on the right in  (\ref{E:defomabcr}) are $0$ except for the very first one which equals $A\bigl(\ovg(t_0){\tilde Y}\bigr)=Y$ since $A$ is a connection form on $P$; this establishes property (ii).

Properties (i) and (ii) are the essential properties of a connection form on a traditional principal bundle and so we will say that $\omega$ is a {\em connection} on $\pi:\pap\to\mpm$ even though we have not equipped the latter  with a smooth structure.  We will use this terminology henceforth for other forms that  enjoy the properties (i) and (ii) in the relevant bundles.

\subsection{Horizontal lifts of vectors using $\omega$} Consider a path $\gamma\in\mpm$ and a tangent $v\in T_{\gamma}\mpm$; this is just a smooth vector field along $\gamma$. Our objective now is to show that for any $\ovg\in\pi_{\ovA}^{-1}(\gamma)$,  the connection $\om$ provides a unique horizontal lift
 ${ \overline v} \in T_{{\overline \gamma}}\pap$;
that is, ${ \overline v} $ satisfies
\Beq\label{omegvectlift}
\om_{\ovg}\bigl({ \overline v}\bigr)=0 \qquad\hbox{and}\qquad \pi_{\ovA}({ \overline v})=v.
\Eeq

We can choose a $C^\infty$ vector field ${\ovv}$ along the path $\ovg$ for which
$$d\pi_{\ovg(t)}\bigl({\ovv}_{\ovg(t)}\bigr)=v_{\gamma(t)}\qquad\hbox{for all $t\in [t_0,t_1]$.}$$
Now let $Z_0$ be the element of $L(G)$ given by
\begin{equation}\label{E:defZ}
Z_0=-\Bigl[{C^R_0}|_{\ovg(t_1)}\bigl({\ovv}(t_1)\bigr)-{C^L_0}|_{\ovg(t_0)}\bigl({\ovv}(t_0)\bigr)+   \int_{t_0}^{t_1}{B_0}|_{\ovg(t)}\bigl({\ovv}(t), {\overline \gamma}'(t)\bigr)\, dt 
\Bigr].
\end{equation}
Let  $\ovv_0\in T_{\ovg(t_0)}P$ for which
\begin{equation}\label{E:ovv0def}
\ovv_0=   {\ovv}^h_0+\ovg(t_0)Z_0,
\end{equation}
where 
$$\ovv^h_0\in T_{\ovg(t_0)}P$$
is the unique $\ovA$-horizontal vector which projects by $\pi_*$ down to $v(t_0)$. Now let $\ovv$ be the vector field along $\ovg$ that is in the tangent space $T_{\ovg}\pap$ and has initial value $\ovv_0$; this vector field is specified in (\ref{E:uniquelift}) discussed earlier. Thus
\begin{equation}
\ovA_{\ovg(t_0)}\bigl(\ovv(t_0)\bigr)=\ovA_{\ovg(t_0)}(Z_0)
\end{equation}
and so
\begin{equation}\label{E:Av0Z0}
\begin{split}
& \ovA_{\ovg(t_0)}\bigl(\ovv(t_0)\bigr)+{C^R_0}|_{\ovg(t_1)}\bigl({\ovv}(t_1)\bigr)-{C^L_0}|_{\ovg(t_0)}\bigl({\ovv}(t_0)\bigr) 
 +   \int_{t_0}^{t_1}{B_0}|_{\ovg(t)}\bigl({\ovv}(t), {\overline \gamma}'(t)\bigr)\, dt \\
&=0.
\end{split}
\end{equation}
This says precisely that
$$\om_{\ovg}(\ovv)=0.$$
Thus we have shown existence of the $\om$-horizontal lift $\ovv\in T_{\ovg}\pap$ of the vector field $v\in T_{\gamma}\mpm$. Uniqueness follows from the fact that the condition (\ref{E:Av0Z0}) implies that the initial value $\ovv(t_0)$ is given by $\ovv_0$ as in (\ref{E:ovv0def}
), and this uniquely specifies $\ovv$ as discussed in the context of  (\ref{E:uniquelift}).

\subsection{Parallel transport of horizontal paths by $\om$}\label{ss:omhortr}  Let us now understand the process of parallel transport in the bundle $\pap$ using the connection form $\om$; we recall that
\begin{equation}
\om_{\overline\gamma} ({\overline v})=A\bigl({\overline v}(t_0)\bigr)+{C^R_0}\bigl({\overline v}(t_1)\bigr)-{C^L_0}\bigl({\overline v}(t_0)\bigr) +  \int_{t_0}^{t_1}{B_0}\Bigl({\overline v}(t), {\overline\gamma} '(t)\Bigr)\,dt,
\end{equation}
for any ${\overline\gamma}\in \pap$ and ${\overline v}\in T_{\overline\gamma}\pap$.  
Consider a $C^\infty$ map
$$[t_0,t_1]\times[s_0,s_1]\to M: (t,s)\mapsto {\Gamma}(t,s)={\Gamma}_s(t),$$
forming a path $s\mapsto\Gamma_s$  on $\mpm$, and consider an initial 
`point' ${\tilde\Gamma}_{s_0}\in\pap$ with 
$$\pi\circ{\tilde\Gamma}_{s_0}={\Gamma}_{s_0}.$$
 Now let
$$\ovG:[t_0,t_1]\times[s_0,s_1]\to P: (t,s)\mapsto \ovG_s(t)$$
be the mapping specified by the requirements that each path 
$$t\mapsto \ovG_s(t)$$
be $\ovA$-horizontal (hence $\ovG_s\in\pap$) and the initial points $\ovG_s(t_0)$ trace out an $A$-horizontal path on $P$ with initial point ${\tilde\Gamma}_{s_0}(t_0)$:
\begin{equation}\label{E:asspec}
\begin{split}
 s &\mapsto\ovG_s(t_0)\in P \qquad \hbox{is an $A$-horizontal path,} \\
 \ovG_{s_0} &={\tilde\Gamma}_{s_0}.
 \end{split}
 \end{equation}
By the nature of the differential equation for parallel transport the path $s\mapsto {\ovG}_s(t_0)$ is $C^\infty$ and then so is the mapping $\ovG$.

Now we would like to understand the nature of the path
$$s\mapsto {\tilde\Gamma}_s\in\pap$$
that is the $\om$-horizontal lift of $s\mapsto \Gamma_s$.  Since both ${\tilde\Gamma}_s$ and  $\ovG_s$ are $\ovA$-horizontal and both project down to the same path $\Gamma_s\in\mpm$ we can express ${\tilde\Gamma}_s$ as a rigid shift of $\ovG_s$:
\begin{equation}\label{E:defasG}
{\tilde\Gamma}_s=\ovG_sa_s
\end{equation}
for a unique $a_s\in G$, for each $s\in [s_0,s_1]$. The tangent vector to $\pap$ for the derivative of $s\mapsto {\tilde\Gamma}_s$ is the vector field along ${\tilde\Gamma}_s$ given by
\begin{equation}\label{E:tangGas}
t\mapsto \partial_s{\tilde\Gamma}_s(t)= \partial_t{\ovG}_s(t)a_s+   {\ovG}_s(t)a_s\,a_s^{-1}{\dot a}_s,
\end{equation}
with the natural meaning for the notation used; for example,  the second term on the right is the  vector  at the point ${\ovG}_s(t)a_s\in P$ arising from the vector $a_s^{-1}{\dot a}_s\in L(G)$. We note that the second term on the right is a vertical vector.  We recall that ${B_0}$ vanishes on vertical vectors and $A$, being a connection form, maps a vertical vector of the form $pZ\in T_pP$ to $Z\in L(G)$. 
Applying $\om$ to $ \partial_t{\tilde\Gamma}_s(t)$ we then obtain
\begin{equation}
\begin{split}
 & A\Bigl(\partial_t{\ovG}_s(t_0)a_s\Bigr) +{C^R_0}\Bigl(\partial_t{\ovG}_s(t_1)a_s\Bigr)-{C^L_0}\Bigl(\partial_t{\ovG}_s(t_0)a_s\Bigr)\\
 & \qquad + a_s^{-1}{\dot a}_s+ \int_{t_0}^{t_1}{B_0}\Bigl( \partial_s{\ovG}_s(t)a_s, \partial_t{\ovG}_s(t)a_s\Bigr)\,dt.
 \end{split}
\end{equation}
The condition that $s\mapsto {\tilde\Gamma}_s$ is $\om$-horizontal is that the above expression is $0$ for $s\in [s_0,s_1]$. Using the equivariance properties of $A$, ${B_0}$, ${C^L_0}$ and ${C^R_0}$, this condition is then equivalent to
\begin{equation}\label{E:adotsas}
\begin{split}
 {\dot a}_sa_s^{-1}& =-A\Bigl(\partial_t{\ovG}_s(t_0)\Bigr)-{C^R_0}\Bigl(\partial_t{\ovG}_s(t_1)\Bigr) + {C^L_0}\Bigl(\partial_t{\ovG}_s(t_0)\Bigr) \\
 &\hskip 1in - \int_{t_0}^{t_1}{B_0}\Bigl( \partial_s{\ovG}_s(t), \partial_t{\ovG}_s(t)\Bigr)\,dt.
  \end{split}
\end{equation}
Now the definition of $a_s$ given in (\ref{E:defasG}), as the `shift' that should be applied to ${\ovG}_s$ to yield ${\tilde\Gamma}_s$, shows that at $s=s_0$ the value is $e$ because, by our definition of $s\mapsto {\ovG}_s$ the initial path $\ovG_{s_0}$ is the same as the given initial path ${\tilde\Gamma}_{s_0}$. Since the right hand side of (\ref{E:adotsas}) involves only $C^\infty$ functions, there is a unique $C^\infty$ solution path
$$[s_0,s_1]\to G: s\mapsto a_s.$$
Thus we have   constructed  the $\om$-horizontal lift 
\begin{equation}\label{E:sGamtildGam}
[s_0,s_1]\to\pap: s\mapsto {\tilde\Gamma}_s=\ovG_sa_s
\end{equation}
 of the given path $s\mapsto\Gamma_s$ on $\mpm$.
 
 Since the ordinary differential equation  (\ref{E:adotsas}) has a unique solution with given initial value $a_{s_0}=e$ it follows that {\em any $C^\infty$ path  $s\mapsto \Gamma_s$ on $\mpm$ has a unique $\om$-horizontal lift to a path $s\mapsto {\tlG}_s$ on $\pap$ with given initial path $\tlG_0$.}

Let us note that if $A=\ovA$ then the first term on the right in (\ref{E:adotsas}) is $0$. No essential generality is achieved by working with $A$ instead of $\ovA$ because their difference could be absorbed into $C^L_0$.

\section{The decorated bundle}\label{s:decbun}

In this section we shall construct a `decorated' principal bundle over a path space starting with a traditional principal bundle along with some additional data. This notion was introduced in our earlier work \cite{CLS2geom} where we developed it from a mainly category-theoretic point of view. In this section we shall explore this notion from a  more differential geometric standpoint. Furthermore, we shall work out several formulas, such as for the derivatives of right actions on the relevant bundles, that will be of use later when we work with connection forms.

As we have remarked before, the motivation for constructing the decorated bundle comes from a physics context, where the decoration arises from a second structure group that  describes a gauge theory where point particles are replaced by paths.

\subsection{Lie crossed modules}
  
A {\em Lie crossed module} $(G, H, \alpha, \tau)$ is comprised of Lie groups $G$ and $H$, and homomorphisms
\begin{equation}\label{E:taualpha}
\tau:H\rightarrow G \quad\hbox{and}\quad \alpha:G\rightarrow {\rm Aut}(H),
\end{equation}
with $\tau$ being smooth and the map $G\times H\to H:(g,h)\mapsto\alpha(g)(h)$ also smooth, satisfying
\Beq\label{liecross}
\begin{split}
 \tau(\alpha(g)(h)) &=g\tau(h)g^{-1}, \qquad \forall g\in G, h\in H,\\
 \alpha(\tau(h))(h') &=hh'h^{-1}, \qquad \forall h, h'\in H.
\end{split}
\Eeq 
For  later use we note that the derivative of the first relation in (\ref{liecross}) leads to
\begin{equation}\label{E:liecross1der}
\tau\bigl[\alpha(g)X\bigr]={\rm Ad}(g)\tau(X) 
\end{equation}
for all $g\in G$ and $X\in L(H)$, and we have used the following natural notation: in (\ref{E:liecross1der})
   $\tau$ means $\tau(X)=d\tau|_eX$, and    $\alpha(g)X=d\alpha(g)|_eX$. 
   
   We will not need the map $\tau$ until we consider questions involving the composition of paths; in the present section as well as the next two sections we need only the semidirect product group $H\rtimes_{\alpha}G$ and not $\tau$.

\subsection{Lie crossed modules and conjugation}
Let $(G, H, \alpha, \tau)$ be a Lie-crossed module as defined in \eqref{liecross}. In the previous section we have constructed 
a principal $G$-bundle $ \pi_{\bar A}: {\mathcal P}_{\bar A}P\to \mpM$; more accurately, the mapping $ \pi_{\bar A}$ is surjective and we have shown a right action of $G$ on ${\mathcal P}_{\bar A}P$ which is free and transitive on each fiber. From this we will now construct  a principal  bundle whose structure group is the semidirect product $H\rtimes_{\alpha} G$; the product law in this group is given by
\begin{equation}\label{E:semidprodlaw}
(h_1, g_1)(h_2, g_2)=\bigl(h_1\alpha(g_1)(h_2), g_1g_2\bigr).
\end{equation}
Identifying $H$ and $G$ in the natural way as subgroups in $H\rtimes_{\alpha}G$ we have the commutation relation
\begin{equation}\label{E:semidcomm}
hg=g\alpha(g^{-1})(h)
\end{equation}
for all $h\in H$ and $g\in G$; to verify this note that the left side is, by definition, $(h,e)(e,g)=(h,g)$ and the right side is $(e,g)\bigl(\alpha(g^{-1})(h), e\bigr)$. The commutation relations can be used to reformulate some of our constructions below in a manner where the elements of $G$ appear before the elements of $H$ and for some relations this results in clearer expressions.  For example, the commutation relation is also equivalent to
\begin{equation}\label{E:semidcomm2}
gh= \alpha(g)(h)g.
\end{equation}

It is also very useful to note that after identifying $G$ and $H$ with subgroups of $H\rtimes_{\alpha} G$ (specifically, writing $g$ for $(e,g)$ and $h$ for $(h,e)$), the commutation relation gives the following friendly form for the automorphism $\alpha$:
\begin{equation}\label{E;alphaconj}
\alpha(g)(h)= ghg^{-1};
\end{equation}
thus  {\em the automorphism $\alpha(g)$ is simply conjugation} by $g$ in $H\rtimes_{\alpha} G$ restricted to the subgroup $H\simeq H\times\{e\}$.

The identification of $H$ and $G$ with the corresponding subgroups of $H\rtimes_{\alpha}G$ often provides a great simplification of notation. An  example of this simplification is seen in the derivative of the mapping $\alpha(g):H\to H$ at $e\in H$:
\begin{equation}\label{E:alphagAdg}
\alpha(g)={\rm Ad}(g):L(H\rtimes_{\alpha}G)\to L(H\rtimes_{\alpha}G).
\end{equation}

 \subsection{The decorated bundle} The total space of the bundle we will study is obtained by {\em decorating} $P$ with
elements of $H$: 
\Beq\label{dec}
{\mathcal P}^{\rm dec}_{\bar A}P:= {\mathcal P}_{\bar A}P\times H.
\Eeq
The  bundle projection is given by the map
\Beq\label{decproj}
\pi_{\rm dec}: {\mathcal P}^{\rm dec}_{\bar A}P\rightarrow \mpM: (\ovg, h)\mapsto \pi_{\bar A}(\ovg)=\pi\circ{ \ovg}.
\Eeq
The group $H\rtimes_{\alpha} G$ acts on the right on the  space ${\mathcal P}^{\rm dec}_{\bar A}P$ by 
\Beq\label{gract}
(\ovg, h)(h_1, g_1):=\Bigl(\ovg g_1, \alpha(g_1^{-1})(h h_1)\Bigr).
\Eeq
It will be notationally convenient to write $({ \overline\gamma}, h)$ as ${ \overline\gamma}h$; then the action (with a dot, which we shall later omit, for visual clarity) reads
\begin{equation}\label{E:gamhh1g1}
{ \overline\gamma}h\cdot h_1g_1=   { \overline\gamma}g_1\cdot \alpha(g_1^{-1})(hh_1),
\end{equation}
an expression which has a formal consistency with the commutation relation (\ref{E:semidcomm}). What we are denoting ${ \overline\gamma}h$ here is what is denoted $({ \overline\gamma}, h^{-1})$ in \cite{CLS2geom}. 
We note that for any ${ \overline\gamma}\in\pap$, $g\in G$ and $h\in H$,
\begin{equation*}
\begin{split}
\hbox{${ \overline\gamma}g$ is an element of $\pap$;}\\
\hbox{${ \overline\gamma}h$ is an element of $\pap\times H$.}\\
\end{split}
\end{equation*}
The notation
$$\ovg hg$$
might have two potentially different meanings:
$$\hbox{ $(\ovg e_H)\cdot (hg)=(\ovg, e_H)(h,g)$ or $(\ovg h)\cdot g=(\ovg, h)(e_H,g)$,}$$
 where we have written $e_H$ to stress that it is the identity in $H$.
However, we can readily verify the following
  notational consistencies:
\begin{equation}\label{E:notcons}
\begin{split}
{ \overline\gamma}e_H\cdot h &={ \overline\gamma}h\\
{ \overline\gamma}e_H\cdot hg &={ \overline\gamma}g\cdot\alpha(g^{-1})(h)={ \overline\gamma}h\cdot g,
\end{split}
\end{equation}
where on the right sides $h$ is, technically, $(h,e)$ and $g$ is $(e,g)$ in  $ H\rtimes_{\alpha}G$.

Because of the relations (\ref{E:notcons}
),  we can write ${ \overline\gamma}e_H$ simply as ${ \overline\gamma}$ if needed.

\begin{prop}\label{P:actionprops}
The mapping 
\begin{equation}\label{E:rightactsemid}
{\mathcal P}^{\rm dec}_{\bar A}P\times (H\rtimes_{\alpha}G)\to {\mathcal P}^{\rm dec}_{\bar A}P: ({ \overline\gamma}h, h_1g_1)\mapsto { \overline\gamma}h\cdot h_1g_1
\end{equation}
is a right action. This action is free and is transitive on the fibers of $\pi_{\ovA}^{\rm dec}:{\mathcal P}^{\rm dec}_{\bar A}P\to {\mathcal P} M$.
\end{prop}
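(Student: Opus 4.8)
The plan is to verify the two right-action axioms and then freeness and fiberwise transitivity, in each case reducing the claim to the already-noted principal $G$-bundle structure of $\pi_{\bar A}:\pap\to\mpm$ (namely, following the remark after Proposition~\ref{P:isopullabar}, that $G$ acts freely and transitively on each fiber of $\pi_{\bar A}$), together with the facts that $\alpha:G\to\Aut(H)$ is a homomorphism and that each $\alpha(g)$ is a homomorphism of $H$.

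That $(e_H,e_G)$ acts trivially is immediate from \eqref{gract}, since $e_G$ acts as the identity on $\pap$ and $\alpha(e_G)=\mathrm{id}_H$. For compatibility with the group law \eqref{E:semidprodlaw} one compares $\bigl((\ovg h)(h_1g_1)\bigr)(h_2g_2)$ with $(\ovg h)\bigl((h_1g_1)(h_2g_2)\bigr)$. Their $\pap$-components both equal $\ovg g_1g_2$ because the $G$-action on $\pap$ is a right action. For the $H$-components, the first expression yields $\alpha(g_2^{-1})\bigl(\alpha(g_1^{-1})(hh_1)\,h_2\bigr)$, which, since $\alpha(g_2^{-1})$ is a homomorphism of $H$ and $\alpha(g_2^{-1})\alpha(g_1^{-1})=\alpha\bigl((g_1g_2)^{-1}\bigr)$, equals $\alpha\bigl((g_1g_2)^{-1}\bigr)(hh_1)\cdot\alpha(g_2^{-1})(h_2)$; the second expression yields $\alpha\bigl((g_1g_2)^{-1}\bigr)\bigl(h\,h_1\alpha(g_1)(h_2)\bigr)=\alpha\bigl((g_1g_2)^{-1}\bigr)(hh_1)\cdot\alpha(g_2^{-1}g_1^{-1}g_1)(h_2)$, and these agree. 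This is routine bookkeeping with the automorphisms; the only place requiring care is keeping the inverses in the right order, which is exactly the content of the commutation relation \eqref{E:semidcomm}.

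For freeness, suppose $(\ovg h)(h_1g_1)=\ovg h$. The $\pap$-component gives $\ovg g_1=\ovg$, so $g_1=e_G$ because $G$ acts freely on $\pap$; the $H$-component then gives $hh_1=h$, hence $h_1=e_H$. For transitivity along a fiber, let $\ovg h$ and $\ovg' h'$ project to the same $\gamma\in\mpm$; then $\ovg,\ovg'$ lie in a single fiber of $\pi_{\bar A}$, and since $G$ acts simply transitively on that fiber there is a unique $g_1\in G$ with $\ovg g_1=\ovg'$. Putting $h_1=h^{-1}\alpha(g_1)(h')\in H$ gives $\alpha(g_1^{-1})(hh_1)=\alpha(g_1^{-1})\alpha(g_1)(h')=h'$, so $(\ovg h)(h_1g_1)=\ovg' h'$; moreover $(h_1,g_1)$ is uniquely determined, which re-confirms freeness.

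No step presents a genuine obstacle: the entire argument is an exercise in unwinding the semidirect-product and action conventions, and the one point demanding attention is the associativity computation, where $\alpha$ must be used as a homomorphism in both arguments and in the correct order.
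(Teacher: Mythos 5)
Your proof is correct and follows essentially the same route as the paper's: a direct check of the action axioms using that $\alpha$ is a homomorphism into $\Aut(H)$, freeness read off from the $\pap$- and $H$-components separately, and fiberwise transitivity by explicitly exhibiting the element of $H\rtimes_{\alpha}G$ (your $\bigl(h^{-1}\alpha(g_1)(h'),g_1\bigr)$ is exactly the paper's $h_1^{-1}\alpha(g)(h_2)g$). The only cosmetic difference is that you verify associativity by expanding both sides and comparing, while the paper transforms one side into the other in a single chain.
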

\begin{proof} We verify that (\ref{E:rightactsemid}) specifies a right action:
\begin{equation}\label{E:ovghgrght}
\begin{split}
{ \overline\gamma}h\cdot (h_1g_1h_2g_2) &={ \overline\gamma}h\cdot \bigl(h_1\alpha(g_1)(h_2)g_1g_2 \bigr)\\
&={ \overline\gamma}g_1g_2\cdot\alpha(g_2^{-1}g_1^{-1})\bigl(hh_1\alpha(g_1)(h_2)\bigr)\\
&={ \overline\gamma}g_1g_2\cdot\alpha(g_2^{-1})\Bigl(\alpha(g_1^{-1})\bigl(hh_1\bigr) h_2\Bigr)\\
&=\Bigl({ \overline\gamma}g_1\cdot\alpha(g_1^{-1})(hh_1)\Bigr)\cdot h_2g_2\\
&=\bigl({ \overline\gamma}h\cdot h_1g_1\bigr)h_2g_2.
\end{split}
\end{equation}
Let us now verify that the action is free. Suppose
\begin{equation}\label{E:ggamh1g1free}
\hbox{a relation}\quad { \overline\gamma}h\cdot h_1g_1={ \overline\gamma}h.
\end{equation}
By (\ref{E:gamhh1g1}), this means
$${ \overline\gamma}g_1\cdot\alpha(g_1^{-1})(hh_1)={ \overline\gamma}h,$$
 which in turn is equivalent to
 $$g_1=e\qquad\hbox{and}\qquad hh_1=h$$
 with the latter relation being equivalent to $h_1=e$. Thus the fixed point relation (\ref{E:ggamh1g1free}) implies that $h_1g_1=e$.
 
 Next suppose $\pi_{\bar A}^{\rm dec}(\ovg_1, h_1)=\pi_{\bar A}^{\rm dec}(\ovg_2, h_2)$. Then ${ \overline\gamma}_1$ and ${ \overline\gamma}_2$, both  $\ovA$-horizontal paths in $P$, project down to the same path $\gamma\in\mpm$, and so there is a $g\in G$ such that
 $${ \overline\gamma}_2={ \overline\gamma}_1g.$$
 Next we observe that
\begin{equation}
\begin{split}
{ \overline\gamma}_2h_2 &= 
{ \overline\gamma}_1gh_2\\
&={ \overline\gamma}_1\alpha(g)(h_2)g \quad\hbox{(by the commutation relation (\ref{E:semidcomm2}))}\\
&={ \overline\gamma}_1h_1\cdot h_1^{-1}\alpha(g)(h_2)g.
 \end{split}
 \end{equation}
 Thus $({ \overline\gamma}_2,h_2)$ is obtained by acting on  $({ \overline\gamma}_1,h_1)$ with the element $h_1^{-1}\alpha(g)(h_2)g\in H\rtimes_{\alpha}G$.
 \end{proof}
 
 As an illustration of the power of working within the semidirect product and using the notation $hg=(h,g)$ and $\ovg hg =(\ovg, h)(e_H,g)$, we realizw that the computation (\ref{E:ovghgrght}) becomes trivial using this notation:
 \begin{equation}\label{E:ovghgrght2}
\begin{split}
{ \overline\gamma}h\cdot (h_1g_1h_2g_2) &={ \overline\gamma}h\cdot \bigl(h_1\alpha(g_1)(h_2)g_1g_2 \bigr)\\
&={ \overline\gamma}g_1g_2\cdot\alpha(g_2^{-1}g_1^{-1})\bigl(hh_1\alpha(g_1)(h_2)\bigr)\\
&={ \overline\gamma}g_1g_2\cdot\alpha(g_2^{-1})\Bigl(\alpha(g_1^{-1})\bigl(hh_1\bigr) h_2\Bigr)\\
&=\Bigl({ \overline\gamma}g_1\cdot\alpha(g_1^{-1})(hh_1)\Bigr)\cdot h_2g_2\\
&=\bigl({ \overline\gamma}h\cdot h_1g_1\bigr)h_2g_2.
\end{split}
\end{equation}
 
  \subsection{Local trivialization}
We have seen that a local trivialization 
$$\phi:U\times G\to\pi^{-1}(U)$$
 of the original bundle $\pi:P\to M$ leads to a local trivialization of   $(\pi_{\bar A}, {\mathcal P}_{\bar A}P, \mpM)$ given in (\ref{E:defphi0}) by
$$\phi^0:    U^0\times G\to \pi_{\bar A}^{-1}(U^0).   $$
Let $\phi_0$ be the inverse of $\phi^0$; thus,
$$\phi_0: \pi_{\bar A}^{-1}(U^0)\to   U^0\times G$$
is a $G$-equivariant bijection. Now we can construct  
  a local trivialization for the $(H\rtimes_{\alpha} G)$-bundle $(\pi_{\rm dec}, {\mathcal P}^{\rm dec}_{\bar A}P, \mpM)$ by means of the mapping:
  
\begin{equation}\label{dectriv}
\begin{split}
\phi^{\rm dec}: U^0\times (H\rtimes_{\alpha}G) &\to  \pi_{\rm dec}^{-1} (U^0)      \\
 \bigl(\gamma, (h,g)\bigr) &\mapsto  \bigl({ \overline\gamma},\alpha(g^{-1})(h)\bigr)\end{split}
\end{equation}
where ${ \overline\gamma}=\phi^0(\gamma, g)$ is the ${\bar A}$-horizontal lift of $\gamma$ starting at the  the point $\phi(\gamma_0,g)$, with $\gamma_0$ being the source (initial point) of $\gamma$.
The inverse of this is:
\begin{equation}\label{dectriv2}
\begin{split}
\phi_{\rm dec}:   \pi_{\rm dec}^{-1} (U^0)  &\to  U^0\times (H\rtimes_{\alpha}G) 
\\
({ \overline\gamma},h) &\mapsto \Bigl({\gamma}, \bigl(\alpha(g)(h), g\bigr)\Bigr)\\
&\quad\hbox{where $g$ is specified by $(\gamma, g)=\phi_0({ \overline\gamma}) $.}\end{split}
\end{equation}

We can check that this is equivariant under the action of $H\rtimes_{\alpha}G$:

\begin{equation}
\begin{split}
\phi^{\rm dec}\bigl(\gamma, hg)\cdot h_1g_1&=\bigl(\phi^0(\gamma, g), \alpha(g^{-1})(h)\bigr)h_1g_1 \\
&= \Bigl(\phi^0(\gamma, g)g_1, \alpha\bigl(g_1^{-1})(\alpha(g^{-1})(h)h_1\bigr)  \Bigr)\\
&=  \Bigl(\phi^0(\gamma, gg_1), \alpha(g_1^{-1}g^{-1})\bigl(h \alpha(g)(h_1)\bigr)  \Bigr)
\end{split}
\end{equation}
which agrees with
\begin{equation}\phi^{\rm dec}\bigl(\gamma, hgh_1g_1\bigr)= \Bigl(\phi^0\bigl(\gamma, gg_1\bigr),  \alpha\bigl((gg_1)^{-1}\bigr)(h\alpha(g)(h_1)\Bigr)\end{equation}
 
Thus we have proved:
\begin{prop}\label{pr:gbundbaradec}
 $(\pi_{\rm dec}, {\mathcal P}^{\rm dec}_{\bar A}P, \mpM)$ is a principal $H\rtimes_{\alpha}G$-bundle with right-action given by 
\eqref{gract} and  local trivialization given by \eqref{dectriv}.
\end{prop}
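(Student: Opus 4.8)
The plan is to assemble the proposition from facts already established, since very little genuinely new work is required. A structure $(\pi_{\rm dec},\pdbap,\mpM)$ with the stated right action qualifies as a principal $H\rtimes_{\alpha}G$-bundle once we check: (a) $\pi_{\rm dec}$ is surjective; (b) the action \eqref{gract} is a free right action, transitive on each fiber of $\pi_{\rm dec}$; and (c) the maps \eqref{dectriv} are fiber-preserving, $(H\rtimes_{\alpha}G)$-equivariant bijections covering a family of subsets of $\mpM$ that exhausts $\mpM$. Point (a) is immediate: $\pi_{\rm dec}$ is the composite of the projection $\pdbap=\pap\times H\to\pap$ with $\pi_{\bar A}:\pap\to\mpM$, and the latter is surjective by the discussion in Section \ref{s:pbhor}. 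Point (b) is exactly Proposition \ref{P:actionprops}, which already verifies the right-action axioms, verifies freeness, and exhibits, for any two points in a common fiber, an element of $H\rtimes_{\alpha}G$ carrying one to the other. So the entire burden is point (c).

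For (c) I would first check that $\phi_{\rm dec}$ in \eqref{dectriv2} is a two-sided inverse of $\phi^{\rm dec}$. Starting from $({\ovg},h)\in\pi_{\rm dec}^{-1}(U^0)$, write $(\gamma,g)=\phi_0({\ovg})$, so that $\phi^0(\gamma,g)={\ovg}$ because $\phi^0$ and $\phi_0$ are mutually inverse (Section \ref{s:pbhor}); then $\phi_{\rm dec}({\ovg},h)=\bigl(\gamma,(\alpha(g)(h),g)\bigr)$, and applying \eqref{dectriv} returns $\bigl({\ovg},\alpha(g^{-1})(\alpha(g)(h))\bigr)=({\ovg},h)$, using $\alpha(g^{-1})\alpha(g)=\alpha(e)={\rm id}$. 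The opposite composition is checked the same way. Fiber-preservation is the computation $\pi_{\rm dec}\bigl(\phi^{\rm dec}(\gamma,(h,g))\bigr)=\pi_{\bar A}\bigl(\phi^0(\gamma,g)\bigr)=\gamma$, where the last equality holds because $\phi^0(\gamma,g)$ is by construction the ${\bar A}$-horizontal lift of $\gamma$ and hence projects to $\gamma$ under $\pi_{\bar A}$. Equivariance is precisely the displayed calculation immediately preceding the statement: expanding $\phi^{\rm dec}(\gamma,hg)\cdot h_1g_1$ with the action law \eqref{gract} and the semidirect product law \eqref{E:semidprodlaw}, together with the $G$-equivariance of $\phi^0$ (i.e.\ $\phi^0(\gamma,g)g_1=\phi^0(\gamma,gg_1)$), yields $\Bigl(\phi^0(\gamma,gg_1),\,\alpha\bigl((gg_1)^{-1}\bigr)\bigl(h\,\alpha(g)(h_1)\bigr)\Bigr)$, which is exactly $\phi^{\rm dec}\bigl(\gamma,(hg)(h_1g_1)\bigr)$. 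Finally, as $U$ ranges over a trivializing cover of $\pi:P\to M$, the sets $U^0={\rm ev}_0^{-1}(U)$ cover $\mpM$, since every path on $M$ has an initial point lying in some such $U$.

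That completes the verification. One may additionally note that on an overlap $U^0\cap V^0$ the transition map $\phi_{\rm dec}\circ\psi^{\rm dec}$ (with $\psi^{\rm dec}$ the analogue of \eqref{dectriv} built from a second trivialization $\psi$) is, by equivariance and fiber-preservation, multiplication by an $(H\rtimes_{\alpha}G)$-valued function on $U^0\cap V^0$ expressible through the $G$-valued transition function $\theta_{\phi^0,\psi^0}$ of Section \ref{s:pbhor}; since no topology has been placed on $\mpM$, no continuity or smoothness of this function is asserted or needed. The only place requiring genuine care is the bookkeeping of the twist by $\alpha$: the trivialization \eqref{dectriv} inserts an $\alpha(g^{-1})$, its inverse \eqref{dectriv2} an $\alpha(g)$, and the action \eqref{gract} an $\alpha(g_1^{-1})$, so one must track consistently which $H$-component has been conjugated by which group element. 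I expect this --- rather than any conceptual difficulty --- to be the main obstacle; once the conventions of \eqref{E:semidprodlaw}--\eqref{E:gamhh1g1} are applied uniformly, every identity collapses to the homomorphism property of $\alpha$ and the already-established equivariance of $\phi^0$.
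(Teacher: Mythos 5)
Your proposal is correct and follows essentially the same route as the paper: freeness and fiber-transitivity of the action via Proposition \ref{P:actionprops}, and the verification that $\phi^{\rm dec}$ of \eqref{dectriv} is a fiber-preserving, $(H\rtimes_{\alpha}G)$-equivariant bijection with inverse \eqref{dectriv2}, which is exactly the computation the paper performs before stating the proposition. Your added checks (surjectivity of $\pi_{\rm dec}$, the two-sided inverse verification, the covering by the sets $U^0$, and the remark on transition functions) only make explicit points the paper leaves implicit.
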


Let us note that when working with bundles over spaces of paths we do not use   a topology or an explicitly stated smooth structure.  This is discussed further in section \ref{s:diffcalc}. 

\subsection{The derivative of the right action} We turn now to some derivative computations that will be useful later, for example in Proposition \ref{pr:decconnect2}, when we study a connection form $\Omega$ on the bundle  of decorated paths. We view ${\mathcal P}^{\rm dec}_{\bar A}P=\pap\times H$ as we would  a product manifold. Thus we specify  a tangent vector ${\hat v}$
at $({\overline \gamma}, h)\in \pdbap$ by
 $$\hat v={\overline v}+X,$$
where ${\overline v} \in T_{{\overline\gamma}}\pap$ and $X\in T_{h}H$. (The tangent space $T_{{\overline\gamma}}\pap$ is itself identifiable with $T_{\gamma}\mpm\oplus L(G)$, where $\gamma=\pi\circ{\overline\gamma}$, by means of a local trivialization.) Thus we will   take the tangent space $T_{({ \overline\gamma},h)}\pdbap$ to be
\begin{equation}\label{E:tangpdec}
T_{({ \overline\gamma},h)}\pdbap=T_{{ \overline\gamma}}\pap\oplus T_hH.
\end{equation}
Recalling from (\ref{gract})  the right action of $H\rtimes_{\alpha}G$  on $\pdbap$ given by
\begin{equation}\label{E:tidgamhh1g1}
({ \overline\gamma},h)h_1g_1=\bigl({ \overline\gamma}g_1, \alpha(g_1^{-1})(hh_1)\bigr),
\end{equation}
we take, for fixed $(h_1,g_1)\in H\rtimes_{\alpha}G$,  the `differential' of the map
$$\pdbap\to\pdbap: ({ \overline\gamma},h)\mapsto({ \overline\gamma},h)h_1g_1$$
 to be given by  
\Beq\label{rightvect}
\begin{split}
  {\mathcal R}_{(h_1,g_1) *}:T_{({\overline \gamma}, h)}\pdbap &\rightarrow T_{({\overline \gamma}, h)h_1g_1}\pdbap\\
 {\mathcal R}_{(h_1, g_1) *}({\overline v}+X):&=({\overline v}+X)(h_1, g_1)={ \overline v} g_1+ g_1^{-1}(Xh_1)g_1,\\
\end{split}
\Eeq
 where $Xh_1\in T_{hh_1}H$  is the image of $X\in T_hH$ under the derivative of the right-translation map $H\to H:x\mapsto xh_1$, and the last term on the right hand side is, more precisely, the derivative $d\alpha(g)|_{hh_1}$ applied to $Xh_1$.

\subsection{Derivative of the orbit map} 
Next let us look at what should be taken to be the differential of the map
$$H\rtimes_{\alpha}G\to \pdbap: (h_1,g_1)\mapsto ({ \overline\gamma}, h)h_1g_1,$$
where $({ \overline\gamma},h)$ is any fixed point in $\pdbap$.  (This will be useful when we study the connection form $\Omega$ in Proposition   \ref{pr:decconnect2}.) We use the realization of the tangent space to $H\rtimes_{\alpha}G$ as
$$T_{(h_1,g_1)}(H\rtimes_{\alpha}G)= T_{h_1}H\oplus T_{g_1}G,$$
and write a vector in this space in the form 
$$h_1Y_1+g_1Z_1\stackrel{\rm def}{=} (h_1Y_1, g_1Z_1)\in T_{h_1}H\oplus T_{g_1}G,$$
where  $Y_1\in L(H)$ and $Z_1\in L(G)$.  Here, as always, $xV$ means the derivative of the left-translation map $G\to G:y\mapsto xy$ by $x$ on $V\in T_xG$ and $Vx$ has an analogous meaning with respect to right translations. We will often use notation  such as $xV$ that makes it possible to see at a glance that we are speaking of a vector located at the point $x$. 

 We also realize the tangent space $T_{({ \overline\gamma}, h)}(\pdbap)$ as
$$T_{({ \overline\gamma}, h)}(\pdbap)=T_{\ovg}(\pap) \oplus T_hH.$$

We will frequently need to use the derivative of the inversion map
$$j:G\to G:g\mapsto g^{-1},$$
and this is given by
\begin{equation}\label{E:dj}
dj|_g(W)= -g^{-1}Wg^{-1},
\end{equation}
for all tangent vectors $W\in T_gG$.  In particular if $W=gZ$, where $Z\in L(G)$, then
\begin{equation}\label{E:dj2}
dj|_g(gZ)=-Zg^{-1}.
\end{equation}
As always we denote by
$${\ovg}g_1Z$$
the vertical vector field along ${ \overline\gamma}g_1$ whose value at any parameter value $t$ is 
\begin{equation}\label{E:tildgammatZ}
{ \overline\gamma}(t)g_1Z=\frac{d}{ds}\Big|_{s=0}{ \overline\gamma}(t)g_1\exp(sZ).
 \end{equation}

 Let us write the right action of $H\rtimes_{\alpha}G$ on $\pap\times H$, as we have done in (\ref{gract}),   in the form
\begin{equation}\label{E:rightactconj}
({ \overline\gamma},h)h_1g_1=({ \overline\gamma}g_1, g_1^{-1}hh_1g_1).
\end{equation}
Holding $(\ovg, h)$ fixed,    the derivative of the orbit map
$$h_1g_1\mapsto (\ovg, h)h_1g_1$$
is given by
\begin{equation}\label{E:diggDgZ2}
 \begin{split}
 r_{({ \overline\gamma},h), (h_1,g_1)}:T_{(h_1,g_1)}(H\rtimes_{\alpha}G)  &\to  T_{( { \overline\gamma}, h)h_1g_1}\pdbap \\
 h_1Y_1+g_1Z_1  &\mapsto { \overline\gamma}g_1Z_1 +g_1^{-1}hh_1Y_1g_1\\
 &\hskip 1in + \Bigl(g_1^{-1}hh_1g_1 Z_1- Z_1g_1^{-1}hh_1g_1\Bigr),  
 \end{split}
 \end{equation}
 where the last expression, comprised of two terms within $(\cdots)$, lies in $T_{hh_1}H$, by the reasoning used below  in (\ref{E:diggDgZ3}). Let us note here the distinction between the derivative $ r_{({ \overline\gamma},h), (h_1,g_1)}$ and the derivative  ${\{\mathcal R}_{(h,g) *}$.  
  
 Here and in most of our computations we identify the Lie algebras of $H$ and $G$ with the corresponding subalgebras inside $L(H\rtimes_{\alpha}G)$. Thus
 \begin{equation}\label{E:lhgds}
 \hbox{$L(H\rtimes_{\alpha}G)= L(H)\oplus L(G)$ as a direct sum of {\em vector spaces.}}
 \end{equation}
 Evaluating the derivative in (\ref{E:diggDgZ2}) at the identity $(e,e)\in H\rtimes_{\alpha}G$ we obtain the linear map
\begin{equation}\label{E:diggDgZ3}
 \begin{split}
r_{({ \overline\gamma},h)}: L(H\rtimes_{\alpha}G)  &\to  T_{( { \overline\gamma}, h)}\pdbap \\
 Y_1+ Z_1  &\mapsto { \overline\gamma}Z_1 + h Y_1 +  h  Z_1- Z_1 h\\  
 &=  { \overline\gamma}Z_1 + h\Bigl( Y_1 +  \bigl(1-{\rm Ad}(h^{-1})\bigr) Z_1\Bigr), \end{split}
 \end{equation}
 where $Y_1\in L(H)$ and $Z_1\in L(G)$. Note that ${\rm Ad}(h^{-1})Z_1$ is obtained by applying  ${\rm Ad}(h^{-1})$ to $Z_1$, with everything taking place inside the Lie algebra $L(H\rtimes_{\alpha}G)$. The term $ \bigl(1-{\rm Ad}(h^{-1})\bigr) Z_1\Bigr)$ lies in $L(H)$, which can be seen by examining the derivative, at the identity in $G$,  of the mapping
  $$G\to H: g_1\mapsto g_1hg_1^{-1}=\alpha(g_1)(h).$$


\section{Connections   on the decorated bundle}\label{s:condecbun}

We continue with the framework established in the preceding sections. Thus
$\ovA$ is a connection on a principal $G$-bundle 
$$\pi:P\to M,$$
and $(G, H, \alpha,\tau)$ is a Lie crossed module. We have then a connection $\oab$,  constructed from a connection $A$ on $P$ and an $L(G)$-valued $2$-form ${B_0}$ on $P$,  on the horizontal path bundle
$$\pap\to\mpm,$$
which is also a principal $G$-bundle in the sense discussed before. We have constructed the decorated principal $H\rtimes_{\alpha}G$-bundle
$$\pdbap=\pap\times H\to\mpm.$$
In this section we shall   construct a connection on this decorated bundle by using the connection $\om$ and two additional forms on $P$ as ingredients.

\subsection{The   endpoint  forms $C_1^{L,R}$ and a $2$-form ${B_1}$} We shall work with  an  $L(H)$-valued  $2$-form ${B_1}$ on $P$ with following properties: 
\Beq\label{propD}
\begin{split}
&  {B_1}(ug, vg))={\rm Ad}(g^{-1})({B_1}(u, v))\qquad \forall u, v \in T_pP, g\in G, \\
&{B_1} (u, v)=0, \qquad {\rm if}\hskip 0.1 cm  u\hskip 0.1 cm {\rm or}\hskip 0.1 cm v \hskip 0.1 cm{\rm is \hskip 0.1 cm vertical}.
\end{split}
\Eeq    
Let us note here that in the first equation above, ${\rm Ad}(g)$ is acting on $ L(H\rtimes_{\alpha}G)$, and it is the same as $\alpha(g)$, as we have noted in (\ref{E:alphagAdg}).  We shall also use   $L(H)$-valued $1$-forms $C_1^L$ and $C_1^R$ on $P$ that have the following properties:
\Beq\label{propC}
\begin{split}
C_1^L|_{pg}(vg)  &={\rm Ad} (g^{-1}) C_1^L|_p(v) \qquad \forall  v \in T_pP, g\in G, \\
  C_1^L|_p(v) &=0, \qquad \hbox{if $v\in T_pP$ is any vertical vector,} \end{split}
\Eeq  
for all $p\in P$ and the corresponding properties for $C_1^{R}$. These conditions imply
\Beq\label{propC_2}
\begin{split}
 C_1^L|_{pg}(vg)  &={\rm Ad}(g^{-1}) C_1^L|_p(v) \qquad \forall  v \in T_pP, g\in G, \\
   C_1^L|_p(v) &=0, \qquad \hbox{if $v\in T_pP$ is any vertical vector.} \end{split}
\Eeq  
 
Let $\Sigma$ be the Maurer-Cartan form on $H$:
$$\Sigma_h(X)=h^{-1}X, \qquad \forall h\in H, \, X\in T_hH,$$
where on the right the notation signifies the action of the derivative of the left-translation map $h_1\mapsto h^{-1}h_1$.

\subsection{The connection form $\Omega$}
As noted before, the Lie algebra $L(H\rtimes_{\alpha}G)$ is the vector space   direct sum $L(H)\oplus L(G)$ (the Lie algebra structure on $L(H\rtimes_{\alpha}G)$ is not, however, obtained as a direct sum of Lie algebras).  As before we denote the evaluation map at the initial point by:
$${\rm ev}_{0}: \pdbap\to P: (\ovg,h)\mapsto \ovg(t_0),$$
where the domain of $\ovg$ is an interval $[t_0,t_1]$. Using a connection form $A$ on $P$, along with the $L(H\rtimes_{\alpha}G)$-valued $2$- and $1$-forms 
\begin{equation}\label{E:fullCLR}
\begin{split} 
B &=B_0+B_1\\
C^{L,R} &=C_0^{L,R}+C_1^{L,R},
\end{split}
\end{equation} 
 we define a  $1$-form $\Omega$ on
$\pdbap$, with values in    $L(H\rtimes_{\alpha}G)$, as follows:
\begin{equation}\label{E:defOmegagh1}
\begin{split}
& \Omega_{ {\overline\gamma}, h}:=\\
 &  {\rm Ad}(h^{-1})\left[{\rm ev}_{0}^*\bigl(A- C^L\bigr)|_{\ovg(t_0)}+{\rm ev}_{1}^*(C^R)|_{\ovg(t_1)}+\int_{
{ \overline\gamma}}B\right]  \\
 &\qquad +\Sigma_{h},
\end{split}
\end{equation}
where on the right we view $\Sigma$ as a form on $\pap\times H$ with the obvious pullback from the projection map onto $H$.
 Thus
\Beq\label{deccon}
\begin{split}
&  \Omega_{{\overline\gamma}, h}({\overline v}+X) \\
&={\rm Ad}(h^{-1})\left[\omega_{{\overline\gamma}}({\overline v})+C^R_{1}|_{\ovg(t_1)}({\overline v}(t_1))-C^L_{1}|_{\ovg(t_0)}({\overline v}(t_0)) \right. \\
&\left.  \hskip 1in+\int_{t_0}^{t_1}{B_1}|_{\ovg(t)}({\overline v}(t), {\overline \gamma}'(t))dt+Xh^{-1}\right],\\
\end{split}
\Eeq
where $ {\overline v}+X\in T_{({\overline\gamma}, h)}\pdbap$, with ${ \overline v}$   a vector field along the path ${ \overline\gamma}$ and $X\in T_hH$,  the $1$-form $\omega$ is as defined in \eqref{E:defomabcr}:
\Beq\label{conn2}
\begin{split}
\omega_{{\overline \gamma}}({\overline v}):&=A_{\ovg(t_0)}({\overline v}(t_0))+{C^R_0}|_{\ovg(t_1)}\bigl(v_{\ovg(t_1)}\bigr)-{C^L_0}|_{\ovg(t_0)}\bigl(v_{\ovg(t_0)}\bigr)\\
&\qquad\qquad +   \int_{t_0}^{t_1}{B_0}|_{\ovg(t)}\bigl({\overline v}(t), {\overline \gamma}'(t)\bigr)\,dt,
\end{split}
\Eeq  
for every path ${ \overline\gamma}:[t_0,t_1]\to P$ in $\pap$.

Let us recall   the right action
\begin{equation}
\pdbap\times (H\rtimes_{\alpha}G)\to \pdbap:\Bigl((\ovg, h), (h_1, g_1)\Bigr)\mapsto ({\ovg}g_1, g_1^{-1}hh_1g_1).
\end{equation}
From this map we have the two `directional derivatives':
\begin{equation}
\begin{split}
{\mathcal R}_{(h_1,g_1) *}: T_{\ovg, h}(\pdbap) &\to  T_{(\ovg , h) h_1g_1}(\pdbap)\\
&\hbox{and}\\
r_{({ \overline\gamma},h)}: T_{(h_1,g_1)}(H\rtimes_{\alpha}G) &\to T_{(\ovg , h) h_1g_1}(\pdbap).
\end{split}
\end{equation}

  \begin{prop}\label{pr:decconnect2} 
The $1$-form $\Omega$ is a connection on $(\pi_{\rm dec}, {\mathcal P}^{\rm dec}_{\bar A}P, \mpM)$ in the sense that the following conditions hold.
\begin{itemize}
\item[(i)]  It is equivariant under the right-action of $H\rtimes_{\alpha}G$:
\begin{equation}\label{E:Omrighteqv}
\Omega_{({ \overline\gamma},h)h_1g_1} {\mathcal R}_{(h_1,g_1) *}({ \overline v},X) =
{\rm Ad}(h_1g_1)^{-1}\Omega_{({ \overline\gamma},h)} ({ \overline v},X) 
\end{equation}
for all $ h,h_1\in H$, $g_1\in G$, ${ \overline\gamma}\in\pap$, ${ \overline v}\in T_{{ \overline\gamma}}\pap$ and $X\in T_hH$;
\item[(ii)] It returns the appropriate elements of $L(H\rtimes_{\alpha}G)$ when applied to vertical vectors:
\begin{equation}\label{E:Omegvert}
\Omega_{({ \overline\gamma},h)} r_{({ \overline\gamma},h)}
(Y_1+Z_1)=Y_1+Z_1,
\end{equation}
where  $r_{({ \overline\gamma},h)}$ is the derivative of the right-action of $H\rtimes_{\alpha}G$ on $\pap$ as in (\ref{E:diggDgZ2}) and $(Y_1,Z_1)\in L(H)\oplus L(G)$.
\end{itemize}
\end{prop}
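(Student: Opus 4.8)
The plan is to verify the two defining properties of a connection form directly from the explicit formula \eqref{E:defOmegagh1}, treating $\Omega$ as the sum of the ``pulled-back'' term (the bracketed expression weighted by $\mathrm{Ad}(h^{-1})$) and the Maurer--Cartan term $\Sigma_h$. For part (i), the key observation is that the right action $(\ovg,h)h_1g_1=(\ovg g_1, g_1^{-1}hh_1g_1)$ separates the two structure groups cleanly: the $\pap$-component is just the $G$-action $\ovg\mapsto\ovg g_1$, while the $H$-decoration transforms by conjugation combined with right translation. First I would compute how each piece inside the square bracket in \eqref{deccon} transforms: by the equivariance hypotheses \eqref{alequi}, \eqref{propD}, \eqref{E:CLRequiv}, \eqref{propC} on $A$, $B=B_0+B_1$, and $C^{L,R}=C_0^{L,R}+C_1^{L,R}$, together with the already-established equivariance of $\omega$ in Proposition \ref{pr:connection}(i), every term in the bracket picks up exactly an overall $\mathrm{Ad}(g_1^{-1})$ when $\ovg$ is replaced by $\ovg g_1$ and $\ovv$ by $\ovv g_1$. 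Meanwhile the leading weight $\mathrm{Ad}(h^{-1})$ becomes $\mathrm{Ad}\bigl((g_1^{-1}hh_1g_1)^{-1}\bigr)=\mathrm{Ad}(g_1^{-1}h_1^{-1}h^{-1}g_1)$, so combining these gives $\mathrm{Ad}(g_1^{-1})\mathrm{Ad}(h_1^{-1})\mathrm{Ad}(h^{-1})\cdot(\text{old bracket})=\mathrm{Ad}(h_1g_1)^{-1}\,\mathrm{Ad}(h^{-1})(\text{old bracket})$, which is the desired transformation of the first summand. For the $\Sigma$ term one uses the standard fact that the Maurer--Cartan form of $H$ transforms under the conjugation-and-translation action $h\mapsto g_1^{-1}hh_1g_1$ exactly so as to supply the missing $\mathrm{Ad}(h_1g_1)^{-1}\Sigma_h$ plus the new Maurer--Cartan pieces coming from $h_1$ and $g_1$ themselves; these extra pieces must be tracked carefully and matched against the new bracket term that $\mathcal{R}_{(h_1,g_1)*}$ produces from the $Xh^{-1}$ contribution in \eqref{deccon}, using the explicit form \eqref{rightvect} of $\mathcal{R}_{(h_1,g_1)*}$.

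For part (ii), I would feed the vertical vector $r_{(\ovg,h)}(Y_1+Z_1)$ from \eqref{E:diggDgZ3} into \eqref{deccon}. Writing the vertical vector as $\ovg Z_1 + h\bigl(Y_1+(1-\mathrm{Ad}(h^{-1}))Z_1\bigr)$, its $\pap$-component is the vertical vector field $\ovv=\ovg Z_1$ and its $H$-component is $X=h\bigl(Y_1+(1-\mathrm{Ad}(h^{-1}))Z_1\bigr)$. Since $\ovv=\ovg Z_1$ is vertical, $B_0,B_1,C_0^{L,R},C_1^{L,R}$ all annihilate it and, by \eqref{E:oABvert2}, $\omega_{\ovg}(\ovg Z_1)=Z_1$; hence the entire bracket in \eqref{deccon} collapses to $Z_1 + Xh^{-1}$. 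Now $Xh^{-1}=Y_1+(1-\mathrm{Ad}(h^{-1}))Z_1$ (the right-translation by $h^{-1}$ undoing the left factor $h$, reading everything inside $L(H\rtimes_\alpha G)$), so $\mathrm{Ad}(h^{-1})[Z_1+Xh^{-1}]=\mathrm{Ad}(h^{-1})[Z_1+Y_1+(1-\mathrm{Ad}(h^{-1}))Z_1]$, and a short algebraic simplification inside $L(H\rtimes_\alpha G)$ should collapse this to $Y_1+Z_1$ once one recombines it with the $\Sigma_h$-term $\Sigma_h(X)=h^{-1}X=Xh^{-1}\cdots$ — more precisely one must be careful that $\Sigma_h(X)=Y_1+(1-\mathrm{Ad}(h^{-1}))Z_1$ as well, and that the $\mathrm{Ad}(h^{-1})$-weighted bracket and the $\Sigma$ term are designed precisely so their sum telescopes to $Y_1+Z_1$. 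I expect this to require writing $hh_1g_1$-style products out in the semidirect product using \eqref{E:semidcomm}--\eqref{E;alphaconj}.

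The main obstacle, and the step deserving the most care, is bookkeeping of the noncommutativity between the $L(H)$ and $L(G)$ summands inside $L(H\rtimes_\alpha G)$: although the Lie algebra is a vector-space direct sum \eqref{E:lhgds}, the bracket is not a direct sum, so terms like $Z_1 h$ versus $h Z_1$, or $\mathrm{Ad}(h^{-1})Z_1$, genuinely mix the two summands, and the identity $\alpha(g)=\mathrm{Ad}(g)$ from \eqref{E:alphagAdg} must be invoked at exactly the right moments. In particular, in part (ii) the appearance of the correction term $(1-\mathrm{Ad}(h^{-1}))Z_1$ in both the $H$-component of the vertical vector and in $Xh^{-1}$ is what makes the cancellation work, and one has to confirm that the $\Sigma$-term contributes $-\mathrm{Ad}(h^{-1})$ times this same correction (after the weighting) rather than something merely similar. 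Everything else — the equivariance computations in part (i) — is routine once the transformation law of each form is recorded, so I would present those compactly, term by term, and devote the detailed display to the verification of \eqref{E:Omegvert}.
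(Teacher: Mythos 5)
Your part (i) is essentially the paper's own computation and is fine: each term in the bracket picks up ${\rm Ad}(g_1^{-1})$, the weight becomes ${\rm Ad}(g_1^{-1}h_1^{-1}h^{-1}g_1)$, and ${\rm Ad}(g_1^{-1}h_1^{-1}h^{-1}g_1){\rm Ad}(g_1^{-1})={\rm Ad}(h_1g_1)^{-1}{\rm Ad}(h^{-1})$. One small correction there: since $h_1,g_1$ are \emph{fixed} group elements, the map $h\mapsto g_1^{-1}hh_1g_1$ pulls the Maurer--Cartan form back to exactly ${\rm Ad}(h_1g_1)^{-1}\Sigma$ — a direct check gives $\Sigma_{g_1^{-1}hh_1g_1}\bigl(g_1^{-1}Xh_1g_1\bigr)=g_1^{-1}h_1^{-1}h^{-1}Xh_1g_1={\rm Ad}(h_1g_1)^{-1}\Sigma_h(X)$ — so the ``extra Maurer--Cartan pieces coming from $h_1$ and $g_1$'' that you propose to track and cancel simply do not arise.

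Part (ii), however, contains a genuine error at precisely the step you flag as crucial. Writing $X=hW$ with $W=Y_1+\bigl(1-{\rm Ad}(h^{-1})\bigr)Z_1$, right translation by $h^{-1}$ does \emph{not} undo the left factor $h$: one has $Xh^{-1}=hWh^{-1}={\rm Ad}(h)W$, not $W$. Your claimed identity $Xh^{-1}=Y_1+\bigl(1-{\rm Ad}(h^{-1})\bigr)Z_1$ is therefore false, and the expression you then form, ${\rm Ad}(h^{-1})\bigl[Z_1+Y_1+\bigl(1-{\rm Ad}(h^{-1})\bigr)Z_1\bigr]$, does not collapse to $Y_1+Z_1$ (note ${\rm Ad}(h^{-1})Y_1\neq Y_1$ in general). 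Worse, your suggestion to then ``recombine'' with a separate $\Sigma_h$-contribution double counts: the formulas (\ref{E:defOmegagh1}) and (\ref{deccon}) are the \emph{same} form written two ways, because ${\rm Ad}(h^{-1})(Xh^{-1})=h^{-1}X=\Sigma_h(X)$; the $Xh^{-1}$ inside the bracket \emph{is} the Maurer--Cartan term, so it must not be added a second time. The correct computation, which is the paper's, is: the vertical $\pap$-component gives $\omega_{\ovg}(\ovg Z_1)=Z_1$, the $C_1^{L,R}$- and $B_1$-terms vanish, and the $X$-part contributes $\Sigma_h(X)=h^{-1}X=W$ \emph{unweighted}, so that
\begin{equation*}
\Omega_{(\ovg,h)}r_{(\ovg,h)}(Y_1+Z_1)={\rm Ad}(h^{-1})Z_1+\Bigl[Y_1+\bigl(1-{\rm Ad}(h^{-1})\bigr)Z_1\Bigr]=Y_1+Z_1;
\end{equation*}
equivalently, staying inside the bracket, ${\rm Ad}(h^{-1})\bigl[Z_1+{\rm Ad}(h)W\bigr]={\rm Ad}(h^{-1})Z_1+W=Y_1+Z_1$. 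As written, your argument for (\ref{E:Omegvert}) does not go through and needs this repair.
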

\begin{proof} For notational convenience we shall write
${\ovv}_0$ for ${\ovv}(t_0)$, $\ovg_0$ for $\ovg(t_0)$, and analogously for other paths and vector fields.

Working through the right-action we have
\begin{equation}\label{E:rightactcomput1}
\begin{split}
&\Omega_{({ \overline\gamma},h)h_1g_1} {\mathcal R}_{(h_1,g_1) *}({ \overline v},X)\\
 &=\Omega_{({ \overline\gamma}g_1, g_1^{-1}hh_1g_1)}({ \overline v}g_1+g_1^{-1}Xh_1g_1)\\
&\hskip 1in \hbox{using (\ref{rightvect})} \\
 &={\rm Ad}(g_1^{-1}h_1^{-1}h^{-1}g_1)\Bigl(\omega_{{ \overline\gamma}}({ \overline v}g_1)+C^R_1|_{{\ovg}_1g_1}({\ovv}_0g_1) -C^L_1|_{{\ovg}_0g_1}({\ovg}_0g_1) \Bigr)\\
 &\qquad
+ (g_1^{-1}hh_1g_1)^{-1}g_1^{-1}Xh_1 g_1 +{\rm Ad}(g_1^{-1}h_1^{-1}h^{-1}g_1)\int_{t_0}^{t_1}{B_1}\bigl({ \overline v}(t)g_1, { \overline\gamma}'(t)g_1\bigr)\,dt.
\end{split}
\end{equation}
We work out the  last term on the right separately:
\begin{equation}
\begin{split}
&{\rm Ad}(g_1^{-1}h_1^{-1}h^{-1}g_1)\int_{t_0}^{t_1}{B_1}\bigl({ \overline v}(t)g_1, { \overline\gamma}'(t)g_1\bigr)\,dt \\
&= {\rm Ad}(g_1^{-1}h_1^{-1}h^{-1}g_1){\rm Ad}(g_1^{-1}) \int_{t_0}^{t_1}{B_1}\bigl({ \overline v}(t), { \overline\gamma}'(t)\bigr)\,dt \\
&\qquad\hbox{(using the  equivariance property of ${B_1}$ from (\ref{propD}))}\\
&=  {\rm Ad}(g_1^{-1}h_1^{-1}h^{-1})\int_{t_0}^{t_1}D\bigl({ \overline v}(t), { \overline\gamma}'(t)\bigr)\,dt.
\end{split}
\end{equation}
Returning to our computation (\ref{E:rightactcomput1}), we have:
\begin{equation}\label{E:rightactcomput2}
\begin{split}
&\Omega_{({ \overline\gamma},h)h_1g_1} {\mathcal R}_{(h_1,g_1) *}({ \overline v},X)\\
&={\rm Ad}(g_1^{-1}h_1^{-1}h^{-1}g_1){\rm Ad}(g_1^{-1})\Bigl(\omega_{{ \overline\gamma}}({ \overline v})+ C^R_1|_{{\ovg}_1}({\ovv}_0) -C^L_1|_{{\ovg}_0}({\ovv}_0)\Bigr)
\\
&\qquad  
 +g_1^{-1}h_1^{-1}h^{-1}g_1\,g_1^{-1} Xh_1g_1+ {\rm Ad}(g_1^{-1}h_1^{-1}h^{-1})\int_{t_0}^{t_1}{B_1}\bigl({ \overline v}(t), { \overline\gamma}'(t)\bigr)\,dt.
\end{split}
\end{equation}
On the other hand
\begin{equation}\label{E:rightactcomput3}
\begin{split}
&{\rm Ad}(h_1g_1)^{-1}\Omega_{({ \overline\gamma},h)}  ({ \overline v},X)\\
&={\rm Ad}(h_1g_1)^{-1}{\rm Ad}(h^{-1})\Bigl(\omega_{{ \overline\gamma}}({ \overline v})+ C^R_1|_{{\ovg}_1}({\ovv}_0) -C^L_1|_{{\ovg}_0}({\ovv}_0) \Bigr)
 \\
&\qquad\qquad +{\rm Ad}(h_1g_1)^{-1}(h^{-1}X)
+{\rm Ad}(h_1g_1)^{-1}{\rm Ad}(h^{-1})\int_{t_0}^{t_1}{B_1}\bigl({ \overline v}(t), { \overline\gamma}'(t)\bigr)\,dt. 
\end{split}
\end{equation}
We see that this is equal to the expression on the right in (\ref{E:rightactcomput2}). This proves property (i) for $\Omega$.

Next we consider how the connection form acts on a `vertical vector' in the bundle $\pdbap$; such a vector is of the form
$$ r_{({ \overline\gamma},h)}
(Y_1+Z_1)\in T_{({ \overline\gamma},h)}(\pdbap),$$
where 
$$Y_1+Z_1=(Y_1,Z_1)\in L(H)\oplus L(G)),$$
is an arbitrary vector in $L(H\rtimes_{\alpha}G)$. Thus
\begin{equation}\label{E:rightactcomput4}
\begin{split}
&\Omega_{({ \overline\gamma},h)} r_{({ \overline\gamma},h)}
(Y_1+Z_1)\\
&=\Omega_{({ \overline\gamma},h)}\Bigl({ \overline\gamma}Z_1+h(Y_1+\bigl(1-{\rm Ad}(h^{-1})\bigr)Z_1\Bigr)\\
&\qquad\hbox{(using the expression for $r_{({ \overline\gamma},h)}$ obtained in (\ref{E:diggDgZ3}))}\\
&= {\rm Ad}(h^{-1})\Bigl(\omega_{{ \overline\gamma}}({ \overline\gamma}Z_1) +C^R_1|_{{\ovg}_1}({\ovg}_1Z_1)-C^L_1|_{{\ovg}_0}({\ovg}_0Z_1)\Bigr) \\
& \hskip 1in  +\Bigl[Y_1+\bigl(1-{\rm Ad}(h^{-1})\bigr)Z_1\Bigr]+ {\rm Ad}(h^{-1})\int_{t_0}^{t_1}{B_1}\bigl({ \overline\gamma}(t)Z_1, { \overline\gamma}'(t) \bigr)\,dt.
  \end{split}
\end{equation}
In the expression on the right, the  terms with $C$ and the last term, with ${B_1}$,  are $0$ because ${B_1}$, $C^L_1$ and  $C^R_1$      vanish on vertical vectors (see (\ref{propD}) and (\ref{propC})). The first term equals $Z_1$:
\begin{equation}\label{E:omegaZ}
\omega_{{ \overline\gamma}}({ \overline\gamma}Z_1)=Z_1,
\end{equation}
 as seen in (\ref{E:oABvert2}). 

Putting all this together we have
\begin{equation}\label{E:rightactcomput5}
\begin{split}
&\Omega_{({ \overline\gamma},h)} r_{({ \overline\gamma},h)}
(Y_1+Z_1)\\
&=  {\rm Ad}(h^{-1})Z_1+\Bigl[Y_1+\bigl(1-{\rm Ad}(h^{-1})\bigr)Z_1\Bigr] +0\\
&=Y_1+Z_1.
\end{split}
\end{equation} 
This proves property (ii). 
\end{proof}

The conditions (i) and (ii) above imply that the form $\Omega$ splits each tangent space  $T_{( \overline \gamma, h)}\pdbap$ into horizontal and vertical subspaces as explained in the following result.

\subsection{Horizontal and vertical parts} We turn now to understanding how the connection form $\Omega$ splits a vector $v\in T_{( \overline\gamma, h)}\pdbap$ splits into a horizontal and a vertical component.
\begin{prop}\label{pr:decconnect} At any 
$( \overline\gamma, h)\in \pdbap$, $\Omega$ splits $T_{( \overline \gamma, h)}\pdbap$ into a direct sum:
\begin{equation}\label{E:Tangdirectsum}
T_{( \overline\gamma, h)}\pdbap={\rm H}_{( \overline\gamma, h)}\pdbap\oplus {\rm V}_{({ \overline\gamma}, h)}\pdbap,
\end{equation}
where the `horizontal subspace' ${\rm H}_{({ \overline\gamma}, h)}\pdbap$ is 
\begin{equation}\label{horaversplit}
 {\rm H}_{({ \overline\gamma}, h)}\pdbap=\ker\Omega_{({ \overline\gamma},h)}, 
 \end{equation}
and the `vertical subspace' ${\rm V}_{({ \overline\gamma}, h)}\pdbap$ is the image of $r_{({ \overline\gamma},h)}$:
 \begin{equation}\label{E:diggDgZ4} {\rm V}_{({ \overline\gamma}, h)}\pdbap= \{r_{({ \overline\gamma},h)}(Y_1+Z_1): Y_1\in L(H), Z_1\in L(G)\}, \end{equation}
 as  noted in (\ref{E:diggDgZ3}), 
 with $r_{({ \overline\gamma},h)}$ being the right action of $H\rtimes_{\alpha}G$ on $\pdbap$.
\end{prop}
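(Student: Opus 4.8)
The plan is to derive the splitting purely algebraically from the two properties of $\Omega$ established in Proposition \ref{pr:decconnect2}, mimicking the standard argument for connections on finite-dimensional principal bundles; in particular no topology or smooth structure on $\mpm$ is needed, only the linearity of $\Omega_{(\ovg,h)}$ on the formal tangent space $T_{(\ovg,h)}\pdbap=T_{\ovg}\pap\oplus T_hH$ of \eqref{E:tangpdec}, which is clear from \eqref{deccon} since each of its terms --- $\omega_{\ovg}(\overline v)$, the endpoint evaluations of $C_1^{L,R}$, the integral of $B_1$ along $\ovg$, and the Maurer--Cartan term $Xh^{-1}$ --- is linear in $\overline v+X$.

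The first step is to read off from property (ii), equation \eqref{E:Omegvert}, the identity $\Omega_{(\ovg,h)}\circ r_{(\ovg,h)}={\rm id}$ on $L(H\rtimes_{\alpha}G)=L(H)\oplus L(G)$. Two consequences are immediate: $r_{(\ovg,h)}$ is injective, so the vertical subspace ${\rm V}_{(\ovg,h)}\pdbap$ of \eqref{E:diggDgZ4} is a faithful copy of $L(H\rtimes_{\alpha}G)$ inside $T_{(\ovg,h)}\pdbap$; and $\Omega_{(\ovg,h)}$ restricts to a linear isomorphism ${\rm V}_{(\ovg,h)}\pdbap\to L(H\rtimes_{\alpha}G)$ with inverse $r_{(\ovg,h)}$.

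The second step establishes the direct sum \eqref{E:Tangdirectsum}. For trivial intersection: any vector of $\ker\Omega_{(\ovg,h)}\cap{\rm V}_{(\ovg,h)}\pdbap$ has the form $r_{(\ovg,h)}(Y_1+Z_1)$ and is annihilated by $\Omega_{(\ovg,h)}$, so property (ii) forces $Y_1+Z_1=0$ and the vector vanishes. For spanning: given $\hat v\in T_{(\ovg,h)}\pdbap$, put
\[
\hat v^{\,{\rm ver}}:=r_{(\ovg,h)}\bigl(\Omega_{(\ovg,h)}(\hat v)\bigr),\qquad \hat v^{\,{\rm hor}}:=\hat v-\hat v^{\,{\rm ver}}.
\]
Then $\hat v^{\,{\rm ver}}$ lies in ${\rm V}_{(\ovg,h)}\pdbap$ by construction, and by the identity of the first step $\Omega_{(\ovg,h)}(\hat v^{\,{\rm ver}})=\Omega_{(\ovg,h)}(\hat v)$, whence $\Omega_{(\ovg,h)}(\hat v^{\,{\rm hor}})=0$, i.e.\ $\hat v^{\,{\rm hor}}\in\ker\Omega_{(\ovg,h)}={\rm H}_{(\ovg,h)}\pdbap$ by \eqref{horaversplit}. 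This gives $T_{(\ovg,h)}\pdbap={\rm H}_{(\ovg,h)}\pdbap\oplus{\rm V}_{(\ovg,h)}\pdbap$.

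There is no real obstacle here; the only point needing care is that, since the path space carries no topology, ``horizontal subspace'' and ``vertical subspace'' must be read exactly as the kernel and image described in \eqref{horaversplit} and \eqref{E:diggDgZ4}. It is worth recording explicitly --- using property (i), the equivariance \eqref{E:Omrighteqv} --- that ${\mathcal R}_{(h_1,g_1) *}$ carries ${\rm H}_{(\ovg,h)}\pdbap$ onto ${\rm H}_{(\ovg,h)h_1g_1}\pdbap$, so the decomposition is compatible with the right action of $H\rtimes_{\alpha}G$; this equivariance is what makes the horizontal-lift and parallel-transport constructions of sections \ref{s:horliftdec} and \ref{s:reduct} well posed.
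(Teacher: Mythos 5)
Your proof is correct, but it takes a more abstract route than the paper's. You derive the splitting purely formally from the identity $\Omega_{(\ovg,h)}\circ r_{(\ovg,h)}={\rm id}$ of Proposition \ref{pr:decconnect2}(ii) together with linearity of $\Omega_{(\ovg,h)}$ and $r_{(\ovg,h)}$: setting ${\hat v}^{\rm ver}=r_{(\ovg,h)}\bigl(\Omega_{(\ovg,h)}({\hat v})\bigr)$ and ${\hat v}^{\rm hor}={\hat v}-{\hat v}^{\rm ver}$ gives the decomposition, and the trivial-intersection argument is the same as the paper's. The paper instead proceeds concretely: it first splits $\ovv$ into its $\omega$-horizontal and $\omega$-vertical parts and writes down explicit formulas \eqref{horizvertdec} for ${\hat v}^{\rm H}$ and ${\hat v}^{\rm V}$, involving the endpoint terms $C_1^{L,R}$ and the integral of $B_1$ against $\ovv^{\rm H}$ translated into $T_hH$, and then verifies by direct computation that $\Omega$ annihilates ${\hat v}^{\rm H}$ and that ${\hat v}^{\rm V}=r_{(\ovg,h)}\bigl(\Omega_{(\ovg,h)}({\hat v}^{\rm V})\bigr)$. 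Your argument is shorter and shows the splitting holds for any $1$-form enjoying properties (i)--(ii) of Proposition \ref{pr:decconnect2}, with no new computation; the paper's version buys explicit expressions for the two components (your ${\hat v}^{\rm ver}$ coincides with the paper's ${\hat v}^{\rm V}$, since $\Omega({\hat v})=\Omega({\hat v}^{\rm V})$), which foreshadow the parallel-transport equations of section \ref{s:horliftdec}. Your closing remark on ${\mathcal R}_{(h_1,g_1)*}$-equivariance of the horizontal spaces via \eqref{E:Omrighteqv} is a correct and relevant addition, though not needed for the statement itself.
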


\begin{proof}  
Let 
$$\hat v={ \overline v}+X\in T_{({ \overline\gamma}, h)}\pdbap,$$
 where ${ \overline v} \in T_{{ \overline\gamma}}\pap, X\in T_h H$.  Let ${ \overline v}^{\rm H}$ and ${ \overline v} ^{\rm V}$ 
be, respectively, the horizontal and vertical components of ${ \overline v}$ with respect to the connection $\omega=\oab$.  Thus, in particular,
$\omega_{\ovg}({ \overline v}^{\rm H})=0$.

Let $\ovg_1$ denote the right endpoint $\ovg(t_1)$, and $\ovg_0$ denote the left endpoint $\ovg(t_0)$. We will now show that the horizontal and vertical components of $\hat v=\ovv+X$ are  
\begin{equation}\label{horizvertdec}
\begin{split}
{\hat v}^{\rm H} &= { \overline v}^{\rm H}-\Bigl(C^R_1|_{\ovg_1}\bigl(\ovv^{\rm H}(t_1)\bigr)-C^L_1|_{\ovg_0}\bigl(\ovv^{\rm H}(t_0)\bigr) +   \int_{t_0}^{t_1} B_1({ \overline v}^{\rm H}(t),{ \overline\gamma}'(t) )dt\Bigr)h
\\
 {\hat v}^{\rm V} &={ \overline v}^{\rm V}+X+\Bigl(C^R_1|_{\ovg_1}\bigl(\ovv^{\rm H}(t_1)\bigr)-C^L_1|_{\ovg_0}\bigl(\ovv^{\rm H}(t_0)\bigr)+   \int_{t_0}^{t_1} {B_1}({ \overline v}^{\rm H}(t),{ \overline\gamma}'(t) )dt\Bigr)h
,
 \end{split}
 \end{equation}
 respectively, where, as always, $Zh\in T_hH$ is the result of applying the derivative of the right translation map $H\to H:x\mapsto xh$ to $Z\in L(H)$.
 
  Our objective now is to show that ${\hat v}^{\rm H}$ lies in the horizontal subspace, $ {\hat v}^{\rm V}$ in the vertical subspace. 
  
   The relation (\ref{E:rightactcomput5})  shows that when $\Omega$ is applied to a vertical vector, which, by definition, is of the form  $v= r_{({ \overline\gamma},h)}(Y_1+Z_1)$ then the value obtained is $Y_1+Z_1$; hence if this is $0$  then $v$ itself is $0$. Thus  the only vertical vector which is also horizontal is just the zero vector. Thus   the sum in 
(\ref{E:Tangdirectsum}) is indeed a direct sum.

Inserting \eqref{horizvertdec} in the expression for $\Omega$ given in \eqref{deccon} we get
\begin{equation}
\begin{split}
{\rm Ad}(h) \Omega_{{ \overline\gamma}, h}({\hat v}^{\rm H}) & = \omega_{{ \overline\gamma}}({\overline v}^{\rm H})  
 +C^R_1|_{\ovg_1}\bigl(\ovv^{\rm H}(t_1)\bigr)-C^L_1|_{\ovg_0}\bigl(\ovv^{\rm H}(t_0)\bigr)\\
 &\qquad + \int_{t_0}^{t_1}{B_1}(\ovv^{\rm H}(t), { \overline\gamma}'(t))dt\\
& \qquad -   \Bigl(C^R_1|_{\ovg_1}\bigl(\ovv^{\rm H}(t_1)\bigr)-C^L_1|_{\ovg_0}\bigl(\ovv^{\rm H}(t_0)\bigr) +   \int_{t_0}^{t_1} {B_1}({ \overline v}^{\rm H}(t),{ \overline\gamma}'(t) )dt\Bigr)  \\
& =0.
\end{split}
\end{equation} 

The vector ${ \overline v}^{\rm V}$, which is the $\omega$-vertical part of ${ \overline v}$,  is given by
\begin{equation}\label{E:vVert}
 { \overline v}^{\rm V}= { \overline\gamma}Z,
 \end{equation}
 where 
 $$Z=\omega_{ \overline\gamma} ({ \overline v})$$
 (as we have discussed earlier in (\ref{E:oABvert})).
 Applying $\Omega_{{ \overline\gamma}, h}$ to ${ \overline v}^{\rm V}$ we have
\begin{equation}\label{E:OghvV}
\begin{split}
&\Omega_{{ \overline\gamma}, h}({\hat v}^{\rm V}) \\
&={\rm Ad}(h^{-1}) \left[Z+ Xh^{-1} +C^R_1|_{\ovg_1}\bigl(\ovv^{\rm H}(t_1)\bigr)-C^L_1|_{\ovg_0}\bigl(\ovv^{\rm H}(t_0)\bigr)\right. \\
&\hskip 2in \left.+   \int_{t_0}^{t_1} {B_1}({ \overline v}^{\rm H}(t),{ \overline\gamma}'(t) )dt \right].
\end{split}
\end{equation}
 We can write the right hand side of (\ref{E:OghvV}) as a sum of a vector in $L(H)$ and a vector in $L(G)$ on using the observation, made earlier after (\ref{E:diggDgZ3}), that $\bigl(1-{\rm Ad}(h^{-1})\bigr)Z$ is in $L(H)$. Thus we have
\begin{equation}
\begin{split}
&\Omega_{{ \overline\gamma}, h}({\hat v}^{\rm V})=\Bigl({\rm Ad}(h^{-1})-1\Bigr)Z \\
&\quad +h^{-1}\left[X+ \left(C^R_1|_{\ovg_1}\bigl(\ovv^{\rm H}(t_1)\bigr)-C^L_1|_{\ovg_0}\bigl(\ovv^{\rm H}(t_0)\bigr)  \right.\right.\\
&\left.\left.\hskip 1.25in + \int_{t_0}^{t_1} {B_1}(\ovv^{\rm H}(t),{ \overline\gamma}'(t) )dt\right)h \right] \\
&\qquad\qquad +Z.
 \end{split}
\end{equation}
Now we recall from (\ref{E:diggDgZ3})  that the derivative of the orbit map
$$H\rtimes_{\alpha}G\to \pdbap: (h_1,g_1)\mapsto ({ \overline\gamma},h)h_1g_1$$
at the identity element $(e,e)\in H\rtimes_{\alpha}G$ 
is given by
 \begin{equation}\label{E:diggDgZ5}
 \begin{split}
r_{({\ovg},h)}: L(H\rtimes_{\alpha}G)  &\to  T_{({ \overline\gamma}, h)}\pdbap \\
 Y_1+ Z_1  &\mapsto    { \overline\gamma}Z_1 + h\Bigl( Y_1 +  \bigl(1-{\rm Ad}(h^{-1})\bigr) Z_1\Bigr). \end{split}
 \end{equation}
 Applying this to $\Omega_{{ \overline\gamma}, h}({\hat v}^{\rm V})$ as given above we obtain
 \begin{equation}
\begin{split}
&r_{({ \overline\gamma},h)}\Bigl(\Omega_{{ \overline\gamma}, h}({\hat v}^{\rm V}) \Bigr)\\
&={ \overline\gamma}Z + h\Bigl({\rm Ad}(h^{-1})-1\Bigr)Z\\
&\qquad +\left[X+ \left(C^R_1|_{\ovg_1}\bigl(\ovv^{\rm H}(t_1)\bigr)-C^L_1|_{\ovg_0}\bigl(\ovv^{\rm H}(t_0)\bigr)  +\int_{t_0}^{t_1} {B_1}(\ovv^{\rm H}(t),{ \overline\gamma}'(t) )dt\right)h \right]  \\
&\hskip 1in +h\bigl(1-{\rm Ad}(h^{-1})\bigr)Z\\
&= { \overline\gamma}Z  + \left[X+ \left(C^R_1|_{\ovg_1}\bigl(\ovv^{\rm H}(t_1)\bigr)-C^L_1|_{\ovg_0}\bigl(\ovv^{\rm H}(t_0)\bigr)  \right.\right. \\
&\hskip 1in \left.\left.+\int_{t_0}^{t_1} {B_1}(\ovv^{\rm H}(t),{ \overline\gamma}'(t) )dt\right)h \right],  \end{split}
 \end{equation}
 which we recognize to be ${\hat v}^{\rm V}$ as given in (\ref{horizvertdec}).  Thus,
 $${\hat v}^{\rm V}=r_{({ \overline\gamma},h)}\Bigl(\Omega_{{ \overline\gamma}, h}({\hat v}^{\rm V}) \Bigr)$$
lies in the vertical subspace  ${\rm V}_{({ \overline\gamma}, h)}\pdbap$.
 \end{proof}
  
\section{Horizontal lifts of paths on decorated bundles}\label{s:horliftdec}

We work with the framework from the preceding sections, with a connection $\bar A$ on a principal $G$-bundle $\pi:P\to M$, and a Lie crossed module  $(G,H,\alpha,\tau)$; as noted before, we shall only use the semidirect product $H\rtimes_{\alpha}G$ and not $\tau$ at this stage.  There is then a principal $G$-bundle $\pap\to \mpm$, where the elements of $\pap$ are $\bar A$-horizontal paths on $P$. We have introduced a connection $\omega$ on   $\pap\to \mpm$, and a connection $\Omega$ on the
principal $H\rtimes_{\alpha} G$-bundle  $\pdbap\to \mpm$. The elements of $\pdbap$ are of the form $({ \overline\gamma}, h)$, where ${ \overline\gamma}\in\pap$  and $h\in H$. Our goal in this section is to determine parallel-transport, illustrated in Figure \ref{fig:pardeco}, by the connection $\Omega$. 

\begin{figure}[htbp]
\begin{center}
\resizebox{6cm}{!}{\input{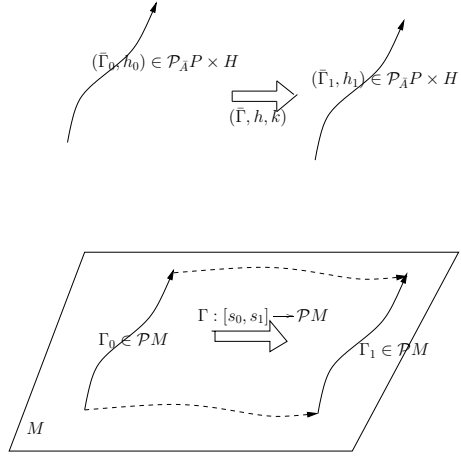}}
\caption{Parallel transport of decorated paths}
\label{fig:pardeco}
\end{center}
\end{figure}

We have, as ingredients of the framework, the connection forms $\ovA$ and $A$ on $P$, the $L(H)$-valued $2$-form $B$ on $P$, the $L(H\rtimes_{\alpha} G)$-valued $1$-forms $C^{L,R}$  on $P$ and the $L(H\rtimes_{\alpha} G)$-valued $2$-form ${B}$ on $P$.  The forms $B$, $C$ and $D$ satisfy equivariance properties as discussed before and vanish when contracted with any vertical vector.  

We consider a path
$$\Gamma: [s_0,s_1]\to \mpm: s\mapsto\Gamma_s$$
where each $\Gamma_s$ is a $C^\infty$  path $[t_0,t_1]\to M$, for some $t_0<t_1$.  
We assume that $\Gamma$ is smooth in the sense that
\begin{equation}\label{E:Gamma}
[t_0,t_1]\times [s_0,s_1]\to M: (t,s)\mapsto \Gamma(t,s)=\Gamma_s(t)
\end{equation}
is $C^\infty$.  There are some additional technical requirements we impose in order to ensure that composition of paths of paths lead to  paths of paths of the same nature. To this end we assume that for the mapping $\Gamma$ there exists an $\epsilon>0 $  such that for each fixed $s$ the point
$$\Gamma_s(t)$$
remains constant when $t$ is within distance $\epsilon$ of $t_0$ or $t_1$, and for each fixed $t\in [t_0, t_1]$ the point $\Gamma_s(t)$ remains constant when $s$ is within distance $\epsilon$ of $s_0$ or $s_1$. Furthermore, we identify $\Gamma$ with the mapping
$$\Gamma^{-v}: ([t_0,t_1]\times [s_0,s_1])+v\to M: (t,s)\mapsto \Gamma\bigl((t,s)-v\bigr),$$
for any fixed $v\in\mbr^2$. More precisely, identification means that we form a quotient space ${\mathcal P}_2(M)$, where $\Gamma$ and $\Gamma^{-v}$ correspond to the same element. This identification makes it possible to speak of composition of two paths of the form $[s_0,s_1]\to\mpm:s\mapsto\Gamma_s$.

 Our goal is to determine the $\Omega$-horizontal lift of $s\mapsto \Gamma_s$, with a given initial point
$$({ \tlG}_{s_0},h_{s_0})\in {\pi_{\bar A}^{\rm dec}}^{-1}(\Gamma_{s_0}).$$

To this end let 
\begin{equation}
[s_0,s_1]\to \pap: s\mapsto { \tlG}_s
\end{equation}
be the $\omega$-horizontal lift of the path $s\mapsto \Gamma_s$, with initial point ${ \tlG}_{s_0}$.  (Recall that $\omega $ is a connection on $\pap$ and in subsection   \ref{ss:omhortr} we have shown the existence of $\om$-horizontal lifts.) Next let
\begin{equation}\label{E:hstauks}
s\mapsto h_s
\end{equation}
be the solution of the differential equation
\begin{equation}
 {\dot h}_sh_s^{-1}=- \left[C^R_1\bigl(\partial_s\tlG_s(t_1)\bigr)-C^L_1\bigl(\partial_s\tlG_s(t_0)\bigr)+\int_{t_0}^{t_1}{B_1}\Bigl(\partial_s{ \tlG}_s(t), \partial_t{ \tlG}_s(t)\Bigr)\,dt\right]
\end{equation}
with an initial value $h_{s_0}=e\in H$.

We recall our assumptions that $ C_1^{L,R}$ and $  {B_1}$ take values in the Lie algebra $L(H)\subset L(H\rtimes_{\alpha}G)$. As a result, $h_s$ lies in $H$.

We note that
\begin{equation}\label{E:dothdot}
\begin{split}
{\dot h}_sh_s^{-1}  
&=- \left[C^R_1\bigl(\partial_s\tlG_s(t_1)\bigr)-C^L_1\bigl(\partial_s\tlG_s(t_0)\bigr)+\int_{t_0}^{t_1}{B_1}\Bigl(\partial_s{ \tlG}_s(t), \partial_t{ \tlG}_s(t)\Bigr)\,dt\right].
\end{split}
\end{equation}  
Let us recall from (\ref{deccon}) the connection form $\Omega$ on $\pdbap$ given by:
\Beq\label{decOm}
\begin{split}
&  \Omega_{{\overline\gamma}, h}({\overline v}+X) \\
&={\rm Ad}(h^{-1})\left[\omega_{{\overline\gamma}}({\overline v})+ C^R_1\bigl(\partial_s\tlG_s(t_1)\bigr)- C^L_1\bigl(\partial_s\tlG_s(t_0)\bigr)\right.\\
&\qquad\left. \qquad \qquad\qquad+   \int_{t_0}^{t_1}{B_1}|_{\ovg(t_0)}({\overline v}(t), {\overline \gamma}'(t))dt+Xh^{-1}\right],\\
\end{split}
\Eeq
where $ {\overline v}+X\in T_{({\overline\gamma}, h)}\pdbap$, with ${ \overline v}$   a vector field along the path ${ \overline\gamma}:[t_0,t_1]\to P$ belong to  $\pap$ and $X\in T_hH$,  the $1$-form $\omega$ is as defined in \eqref{conn}:
\Beq\label{conn3}
\omega_{{\overline \gamma}}({\overline v}):=  A_{\ovg(t_0)}({\overline v}(t_0))+{C^R_0}({\overline v}(t_1))-{C^L_0}({\overline v}(t_0))+  \int_{t_0}^{t_1}B_0|_{\ovg(t_0)}\bigl({\overline v}(t), {\overline \gamma}'(t)\bigr)\,dt.
\Eeq

\begin{prop}\label{P:Omegahorlift} Suppose  $(G, H, \alpha,\tau)$ is a  Lie crossed module  and $\ovA$ is a connection on a principal $G$-bundle $\pi:P\to M$. We have then as above the bundle $\pap\to\mpm$ of $\ovA$-horizontal paths on $M$ over the space $\mpm$ of paths on $M$, and the decorated bundle
$$\pdbap=\pap\times H\to\mpm,$$
equipped with a connection form $\Omega$ given above in (\ref{decOm}), involving the forms $C_1^{R,L}$ and ${B_1}$ that take values in   $L(H)$. Then  the path
\begin{equation}
[s_0,s_1]\to  \pdbap: s\mapsto ({ \tlG}_s,h_s)
\end{equation}
is $\Omega$-horizontal if and only if $s\mapsto {\tlG}_s$ is an $\om$-horizontal path on $\pap$ and $s\mapsto h_s$ satisfies the differential equation
\begin{equation}\label{E:dothdot2}
{\dot h}_sh_s^{-1}=-\left[C^R_1\bigl(\partial_s\tlG_s(t_1)\bigr)- C^L_1\bigl(\partial_s\tlG_s(t_0)\bigr)+\int_{t_0}^{t_1}  {B_1}\Bigl(\partial_s{ \tlG}_s(t), \partial_t{ \tlG}_s(t)\Bigr)\,dt\right].
\end{equation}
\end{prop}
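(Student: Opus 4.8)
The plan is to compute the connection form $\Omega$ directly on the velocity of the candidate lift and to read off $\Omega$-horizontality as the vanishing of two independent Lie-algebra components. First I would identify the tangent vector to the path $s\mapsto({\tlG}_s,h_s)$ at parameter $s$: viewed inside $T_{({\tlG}_s,h_s)}\pdbap=T_{{\tlG}_s}\pap\oplus T_{h_s}H$ as in (\ref{E:tangpdec}), it is $\hat v_s=\partial_s{\tlG}_s+{\dot h}_s$, where $\partial_s{\tlG}_s$ denotes the vector field $t\mapsto\partial_s{\tlG}_s(t)$ along ${\tlG}_s$ and ${\dot h}_s\in T_{h_s}H$. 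One should check at this point that $\partial_s{\tlG}_s$ genuinely lies in $T_{{\tlG}_s}\pap$, i.e.\ satisfies the tangency condition (\ref{E:tang}); this holds because each ${\tlG}_s$ is $\ovA$-horizontal, so differentiating $\ovA\bigl(\partial_t{\tlG}_s(t)\bigr)=0$ with respect to $s$ and using the structure equation for $\ovA$ yields $\partial_t\ovA\bigl(\partial_s{\tlG}_s(t)\bigr)=F^{\ovA}\bigl({\tlG}_s'(t),\partial_s{\tlG}_s(t)\bigr)$, which is exactly (\ref{E:tang}).

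Next I would substitute $\hat v_s$ into the formula (\ref{decOm}) for $\Omega$, which gives
\[
\Omega_{{\tlG}_s,h_s}(\hat v_s)={\rm Ad}(h_s^{-1})\Bigl[\,\om_{{\tlG}_s}(\partial_s{\tlG}_s)+C^R_1\bigl(\partial_s{\tlG}_s(t_1)\bigr)-C^L_1\bigl(\partial_s{\tlG}_s(t_0)\bigr)+\int_{t_0}^{t_1}{B_1}\bigl(\partial_s{\tlG}_s(t),\partial_t{\tlG}_s(t)\bigr)\,dt+{\dot h}_sh_s^{-1}\,\Bigr].
\]
Since ${\rm Ad}(h_s^{-1})$ is a linear isomorphism of $L(H\rtimes_\alpha G)$, the path is $\Omega$-horizontal if and only if the bracketed expression vanishes for every $s$.

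The crux is then the vector-space decomposition $L(H\rtimes_\alpha G)=L(H)\oplus L(G)$ recorded in (\ref{E:lhgds}). Among the terms in the bracket, $\om_{{\tlG}_s}(\partial_s{\tlG}_s)$ is $L(G)$-valued, since $A$, $C_0^{L,R}$ and $B_0$ all take values in $L(G)$; the remaining terms are $L(H)$-valued, since $C_1^{L,R}$ and ${B_1}$ take values in $L(H)$ by the standing hypotheses of section \ref{s:condecbun} and ${\dot h}_sh_s^{-1}\in L(H)$ because $h_s\in H$. Therefore the bracket vanishes precisely when both of its components vanish: the $L(G)$-part vanishes iff $\om_{{\tlG}_s}(\partial_s{\tlG}_s)=0$ for all $s$, i.e.\ iff $s\mapsto{\tlG}_s$ is $\om$-horizontal, and the $L(H)$-part vanishes iff ${\dot h}_sh_s^{-1}=-\bigl[C^R_1(\partial_s{\tlG}_s(t_1))-C^L_1(\partial_s{\tlG}_s(t_0))+\int_{t_0}^{t_1}{B_1}(\partial_s{\tlG}_s(t),\partial_t{\tlG}_s(t))\,dt\bigr]$, which is (\ref{E:dothdot2}). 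Conversely, if $s\mapsto{\tlG}_s$ is $\om$-horizontal and $h_s$ solves (\ref{E:dothdot2}), both components, hence the whole bracket, vanish, so $\Omega_{{\tlG}_s,h_s}(\hat v_s)=0$. I do not expect a real obstacle: the only steps needing a modicum of care are verifying that $\partial_s{\tlG}_s$ obeys the tangency condition so that the displayed formula for $\Omega$ is applicable, and keeping the $L(H)$- versus $L(G)$-bookkeeping straight; everything else is a single substitution followed by the direct-sum argument.
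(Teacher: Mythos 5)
Your proposal is correct and follows essentially the same route as the paper's proof: evaluate $\Omega$ on the velocity $\partial_s({\tlG}_s,h_s)$, then observe that the result vanishes precisely when its $L(G)$-component (the $\om$-term) and its $L(H)$-component (the $C_1^{L,R}$, ${B_1}$ and ${\dot h}_sh_s^{-1}$ terms) vanish separately. Your bookkeeping is in fact slightly tidier than the paper's, since you factor out the isomorphism ${\rm Ad}(h_s^{-1})$ before invoking the splitting $L(H\rtimes_{\alpha}G)=L(H)\oplus L(G)$, and your preliminary check that $\partial_s{\tlG}_s$ satisfies the tangency condition (\ref{E:tang}) is a reasonable extra verification the paper leaves implicit.
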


\begin{proof}
  Evaluating $\Omega$ on the tangent vector (field) 
 $$\partial_s ({ \overline\Gamma}_s,h_s)= \Bigl(\partial_s{ \tlG}_s, {\dot h}_s\Bigr)\in T_{({ \tlG}_s, h_s)}\pdbap,$$
 we have
\begin{equation}
\begin{split}
 &\Omega_{({ \tlG}_s, h_s)}\partial_s ({ \tlG}_s, h_s) ={\rm Ad}(h_s^{-1})\omega\bigl(\partial_s { \tlG}_s\bigr)   + {\rm Ad}(h_s^{-1})\left[{\dot h}_sh_s^{-1}  \right. \\
& \qquad\left. +  C^R_1\bigl(\partial_s\tlG_s(t_1)\bigr)-C^L_1\bigl(\partial_s\tlG_s(t_0)\bigr) +\int_{t_0}^{t_1}{B_1}\Bigl(\partial_s{ \tlG}_s(t), \partial_t{ \tlG}_s(t)\Bigr)\,dt\Bigr) \right].\end{split}
\end{equation}
Here, on the right, the first term is in $L(G)$ and the second term is in $L(H)$. The entire expression is $0$ if and only if each of these terms is $0$. This is equivalent to $s\mapsto {\tlG}_s$ being $\omega$-horizontal and $s\mapsto h_s$ satisfying the differential equation (\ref{E:dothdot2}). \end{proof}

\section{Passing to a higher decoration}\label{s:catgeom}

In the preceding sections we described a connection form $\oab$ on the principal $G$-bundle $\pap\to\mpm$, where $\pap$ is a space of $\ovA$-horizontal paths with $\ovA$ being a connection on a principal $G$-bundle $\pi:P\to M$. The form $\oab$, as  specified in (\ref{conn}), is given by  
\Beq\label{conn3ab}
\omega^{(A, B_0)}_{{ \overline\gamma}}({\overline v}):=A({\overline v}(t_0))+ \int_{t_0}^{t_1}{B_0}|_{\ovg(t)}({\overline v}(t), {{ \overline\gamma}}'(t))dt,
\Eeq  
where $A$ is a connection on $\pi:P\to M$ and $B$ is an $L(G)$-valued ${\rm Ad}$-equivariant $2$-form on $P$ that vanishes on vertical vectors. A more general connection on $\pap$ is $\om$, given by
\Beq\label{connom4}
\omega_{{ \overline\gamma}}({\overline v}):=\omega^{(A, B_0)}_{{ \overline\gamma}}({\overline v}) +{C^R_0}|_{\ovg(t_1)}\bigl(\ovv(t_1)\bigr)-{C^L_0}|_{\ovg(t_0)}\bigl(\ovv(t_0)\bigr),
\Eeq  
where $C^{L,R}_0$ are $L(G)$-valued $1$-forms on $P$ that vanish on vertical vectors and are Ad-equivariant.
 Next, moving up a level, we considered a     principal $H\rtimes_{\alpha}G$-bundle
$$\pdbap=\pap\times H\to \mpm,$$
which we call a {\em decorated} bundle. We brought in $L(H)$-valued $1$-form $C$ and an $L(H)$-valued $2$-form ${B_1}$ on $P$, both appropriately equivariant and contracting to $0$ on vertical vectors, and imposed the further condition (for convenience) that $ C$ and $  {B_1}$ take values in $L(H)$ rather than the full Lie algebra $L(H\rtimes_{\alpha}G)$. 
Then we introduced the connection form  $\Omega$ on $\pdbap$, specified at any point $({ \overline\gamma},h)\in\pdbap$ by
\Beq\label{decOm2}
\begin{split}
&  \Omega_{{\overline\gamma}, h}({\overline v}+X) \\
&={\rm Ad}(h^{-1})\left[\omega_{{\overline\gamma}}({\overline v})+ C^R_1|_{\ovg(t_1)}({\overline v}(t_1))- C^L_1|_{\ovg(t_0)}({\overline v}(t_0)) \right. \\
&\left. \hskip 1in +   \int_{t_0}^{t_1}{B_1}|_{\ovg(t)}({\overline v}(t), {\overline \gamma}'(t))dt+Xh^{-1}\right],\\
\end{split}
\Eeq
where $ {\overline v}+X\in T_{({\overline\gamma}, h)}\pdbap$, with ${ \overline v}$   a vector field along the path ${ \overline\gamma}:[t_0,t_1]\to P$ belong to  $\pap$ and $X\in T_hH$.

At this stage we observe that the connection $\Omega$ on the  principal $H\rtimes_{\alpha}G$-bundle $\pdbap\to\mpm$ is analogous to a connection $\ovA$ on the principal $G$-bundle $P\to M$. {\em Just as we decorated the $\ovA$-horizontal paths on $P$ with elements of a new group $H$, we will decorate the $\Omega$-horizontal paths on $\pdbap$ with elements from a new group $K$.}

Let us then examine the space of  $\Omega$-horizontal paths. This is given by
\begin{equation}\label{E:pdec1OmA}
{\mathcal P}_{\Omega}(\pdbap),
\end{equation}
whose points are $\Omega$-horizontal paths
$$\ovG h:[s_0,s_1]\to  \pdbap :s\mapsto (\ovG_s,h_s).$$
 As with paths on $P$ and on $M$ we impose the technical conditions discussed in    the context of (\ref{E:Gamma}). We have also projection mapping
 \begin{equation}\label{E:dec1OmAbun}
\pi_{\Omega}: {\mathcal P}_{\Omega}(\pdbap) \to\mpm:\ovG h\mapsto \pi_{\ovA}(\ovG)=\pi\circ\ovG.
 \end{equation}
 The right action of $H\rtimes_{\alpha}G$ on $\pdbap$ maps   $\Omega$-horizontal paths on $\pdbap$ to $\Omega$-horizontal paths.  In this way ${\mathcal P}_{\Omega}(\pdbap)$ is a principal $H\times_{\alpha}G$-bundle over $\mpm$.

With the objective of moving to a second decorating structure group $K$, let us consider a second Lie crossed module
\begin{equation}\label{E:K2G2}
(H\rtimes_{\alpha}G, K, \alpha_2, \tau_2).
\end{equation}
Decorating an element $\ovG h\in\pdbap$ with an element of $K$ produces a pair
\begin{equation}\label{E:Gamhk}
 (\ovG h, k),
\end{equation}
 where 
 $$\ovG h: [s_0,s_1]\to P\times H: s\mapsto ({ \overline\Gamma}_s,h_s)$$ 
 is a path belonging to $\pdbap$ that is $\Omega$-horizontal and $k\in K$. (Since $s\mapsto h_s$ is determined, through the differential equation (\ref{E:dothdot}),  by its initial value $h_{s_0}$ we could simply use this initial value in (\ref{E:Gamhk})). These pairs yield the  {\em doubly decorated} bundle  
  \begin{equation}\label{E:decdecbun}
\pi_{2}^{\rm dec}: {\mathcal P}_{\Omega}^{\rm dec}(\pdbap)\stackrel{\rm def}{=}  {\mathcal P}_{\Omega}(\pdbap)\times K \to\mpm:(\ovG h, k)\mapsto \pi_{\ovA}(\ovG)=\pi\circ\ovG.
 \end{equation}
 The new structure group is    $K\rtimes_{\alpha_2}(H\rtimes_{\alpha}G)$.
 
 The procedure of passing to decorated structures and connections can be expressed very efficiently and comprehensively using a category-theoretic framework which we have developed in \cite{CLS2geom}. We shall now describe this framework; further details and proofs are available in \cite{CLS2geom}. 
 
   \subsection{Categorical groups}
 
 A {\em categorical group} ${\mathbf G}$ is a category along with a functor
 $$\mbg\times\mbg\to\mbg$$
 that makes both $\Obj(\mbg)$ and $\Mor(\mbg)$ groups.  The functoriality says, explicitly, that
 \begin{equation}\label{E:exchangelaw}
 (f'_2\circ f'_1)(f_2\circ f_1)=(f'_2f_2)\circ (f'_1f_1)
 \end{equation}
 for all morphisms $f_i, f'_j$ for which the composites on the left are defined. We note two useful consequences: (i) the source and target maps
 $$s, t:\Mor(\mbg)\to\Obj(\mbg)$$
 are homomorphisms, and (ii) $\Obj(\mbg)\to\Mor(\mbg):a\mapsto 1_a$  is a homomorphism.

 For most cases of interest to us, both $\Obj(\mbg)$ and $\Mor(\mbg)$ are Lie groups and the source and target maps are smooth. Usually we denote the object group $\Obj(\mbg)$ by $G$, and we denote by $H$ the kernel of the source map:
 $$H=\ker s\subset \Mor(\mbg).$$
 The restriction of  the target map $t$ to $H$ gives a homomorphism
 $$\tau:H\to G.$$
 There is an action of $G$ on $H$, given by
\begin{equation}\label{E:hactG}
h\mapsto \alpha(g)(h)=1_gh1_g^{-1},
\end{equation}
 where $1_g:g\to g$ is the usual identity morphism at the object $g\in G$; thus $\alpha:G\to {\rm Aut}(H)$ is a homomorphism. It can be checked then that the map
 $$\Mor(\mbg)\to H\rtimes_{\alpha}G: \phi\mapsto\Bigl(\phi1_{s(\phi)}^{-1}, s(\phi)\Bigr)$$
 is an isomorphism of groups, thereby identifying $\Mor(\mbg)$ with the semidirect product group $H\rtimes_{\alpha}G$.  Under this identification, the source and target maps become:
 \begin{equation}\label{E:semidst }
 s(h,g)=g\qquad\hbox{and}\qquad t(h,g)=\tau(h)g.
 \end{equation}
 The composition of morphisms correspond to the following operation on $H\rtimes_{\alpha}G$:
 \begin{equation}\label{E:semidcompo}
 (h_2,g_2)\circ (h_1,g_1)=(h_2h_1,g_1),
 \end{equation}
 for all $(h_1,g_1), (h_2,g_2)\in H\times G$ for which $\tau(h_1)g_1=g_2$.
 
 The system  $(G,H,\alpha,\tau)$ is called a {\em crossed module}, and in the case where $G$ and $H$ are Lie groups and  $\tau$ as well as the action map $(g,h)\mapsto\alpha(g)(h)$ are smooth,  it is called a {\em Lie crossed module}. 
 
 We refer to Kelly and Street \cite{KS} for   more details and references on the subject of crossed modules, 2-groups and related notions.

   \subsection{Categorical principal bundles}
 
We will work with a categorical group ${\mathbf G}$, with associated Lie crossed module $(G,H,\alpha,\tau)$. 
By a {\em right action} of ${\mathbf G}$ on a category ${\mathbf P}$ we mean a functor
\begin{equation}\label{D:rightact}
\mbp\times\mbg\to\mbp
\end{equation}
that is a right action both at the level of objects and at the level of morphisms.  

By a {\em principal categorical bundle}  we mean a functor $\pi:\mbp\to\mbbb$ along with a right action of a categorical group $\mbg$ on $\mbp$ satisfying the following conditions:
\begin{itemize}
\item[(b1)] $\pi$ is surjective both at the level of objects and at the level of morphisms;
\item[(b2)] the action of $\mbg$ on $\mbp$ is free on both objects and morphisms;
\item[(b3)]  the action of $\Obj(\mbg)$ preserves the fiber $\pi^{-1}(b)$ and its action on this fiber is transitive, for each object $b\in\Obj(\mbbb)$; the action of $\Mor(\mbg)$ preserves the fiber   $\pi^{-1}(\phi)$ and its action on this fiber is transitive,  for each morphism  $\phi\in\Mor(\mbbb)$.
\end{itemize}
We think of the objects of $\mbbb$ as points of a base   space $B$, and a morphism $\gamma\in\Mor(\mbbb)$  as a path in $B$ running from the `initial point' given by the source $s(\gamma)$ to the `final point' given by the target $t(\gamma)$;  we think of $\mbp$ similarly, although other useful interpretations are also possible. 
 We are not imposing any form of local triviality in this context.
  
 Let us pause to look at two examples briefly (without technical details). The first is simply to show that the definition includes that of traditional principal bundles as a special case. For this consider a principal $G$-bundle $\pi:P\to B$, and let $\mbbb$ be the discrete category with object set $B$ and $\mbp$ the discrete category with object set $P$; this forms a categorical principal bundle in the obvious way with the structure categorical group being discrete with  object group $G$. 
 
 Much more relevant to our point of view  is the following example. We start with a connection $\ovA$ on a principal $G$-bundle $\pi:P\to B$; for $\mbbb$ we take as objects the points of $B$ and as morphisms the paths (of suitably smoothness and reparamerization equivalence class so as to make composition of morphisms meaningful) in $B$, whereas for $\mbp$ we take $P$ as object set but for morphisms we take $\ovA$-horizontal paths. This still gives a categorical principal bundle whose structure group is the discrete categorical group with object group $G$. 
 
 To obtain a richer class of examples, we consider the {\em decorated bundle }
\begin{equation}\label{E:decPA}
{\catdeca},
\end{equation}
  whose objects are the points of $P$ and whose morphisms are of the form
 $$({ \overline\gamma}, h)$$
 where ${ \overline\gamma}$ is any $\ovA$-horizontal path on $P$ and $h\in H$, where $(G,H,\alpha,\tau)$ is the Lie crossed module corresponding to the categorical group $G$. The source and target maps are 
 \begin{equation}
\label{E:stdec}
s({ \overline\gamma}, h)=s({ \overline\gamma})\qquad\hbox{and}\qquad t({ \overline\gamma}, h)= t({ \overline\gamma})\tau(h).
\end{equation}
 The law of composition is
 \begin{equation}\label{E:compmor}
({ \overline\gamma}_2,h_2 )\circ ( { \overline\gamma}_1,h_1)=({ \overline\gamma}_3, h_1h_2),
\end{equation}
where ${ \overline\gamma}_3$ is the composite  of the path ${ \overline\gamma}_1$ with the right translate ${ \overline\gamma_2}\tau(h_1)^{-1}$ (so that the final and initial points match correctly):
\begin{equation}\label{E:tilgamcomp}
{ \overline\gamma}_3={ \overline\gamma_2}\tau(h_1)^{-1}\circ { \overline\gamma}_1. \end{equation}
The reversed ordering $h_1h_2$ on the right side in (\ref{E:compmor}) is necessary and ensures proper behavior of the target map:
\begin{equation}\label{E:targetcheck}
t({ \overline\gamma}_3,h_1h_2)=t({ \overline\gamma}_2)\tau(h_1)^{-1} \tau(h_1h_2) =t({ \overline\gamma}_2,h_2),
\end{equation}
and the corresponding result for the sources is clear. There is a right action of $\mbg$ on ${\catdeca}$: on objects it is just the right action of $G$ on $P$, and on morphisms it is given, as we have discussed in (\ref{gract}), by
\Beq\label{gract2}
(\ovg, h)(h_1, g_1):=(\ovg g_1, \alpha(g_1^{-1})(h h_1)).
\Eeq
With this action, $\pi_{\ovA}:{\catdeca}\to\mbbb$ is a principal categorical bundle.
 
   \subsection{Categorical connections}
   
We turn now to a categorical framework for geometry on categorical bundles. By a {\em categorical connection} $\mbba$ we mean an assignment to each $\gamma\in \Mor(\mbbb)$ and each object $u\in\pi^{-1}\bigl(s(\gamma)\bigr)$ of a morphsim ${ \overline\gamma}_u\in\Mor(\mbp)$, {\em the $\mbba$-horizontal lift initiating at} $u$, satisfying the following conditions: 

 \begin{itemize}
 \item[(h1)] $s({ \overline\gamma}_u)=u$
 \item[(h2)] the morphism ${ \overline\gamma}_u$ projects down to $\gamma$: 
$$\pi\bigl({ \overline\gamma}_u\bigr)=\gamma;$$
\item[(h3)] the lifting   is functorial: if $\zeta\in\Mor(\mbbb)$ is   a morphism with source $s(\gamma)$ then the $\mbba$-horizontal lift of ${\zeta}\circ{\gamma}$ through $u$ is
\begin{equation}\label{E:horliftfunct}
{ \overline \zeta}^{\rm hor}_v\circ { \overline \gamma}^{\rm hor}_u,\end{equation}
where $v=t\bigl( { \overline \gamma}^{\rm hor}_u\bigr)$;
\item[(h4)]  a `rigid vertical motion' of a horizontal morphism  produces a horizontal morphism:  
\begin{equation}\label{E:horliftrightact}
{ \overline \gamma}^{\rm hor}_{u}1_g
={ \overline \gamma}^{\rm hor}_{ug} \end{equation}
for all $g\in \Obj(\mbg)$. Identifying morphisms of $\mbg$ with pairs  $(h,g)\in H\rtimes_{\alpha}G$, this condition reads
\begin{equation}\label{E:horliftrightact2}
 \gamma^{\rm hor}_{u}\cdot (e,g) = \gamma^{\rm hor}_{ug}.
\end{equation}
\end{itemize}
 
Consider now  the categorical bundle 
$$\pi_{\ovA}:\mbp_{\ovA}^{\rm dec}\to\mbbb,$$
that we described in the context of (\ref{E:decPA}). 
Let us    construct a categorical connection on this bundle. For this purpose we use an  $L(H)$-valued $\alpha$-equivariant $1$-form $C_0$ on $P$ that vanishes on vertical vectors.   This $1$-form allows us to obtain a decorating element $h^*({ \overline\gamma}_u)$ for each $\ovA$-horizontal path ${ \overline\gamma}_u:[t_0,t_1]\to P$   as follows:  $h^*({ \overline\gamma}_u)$ is the value at $t=t_1$ of the solution of the differential equation
\begin{equation}\label{E:dhuC}
 h'(t)h(t)^{-1} =-C_0\bigl({ \overline\gamma}_u'(t)\bigr)\qquad t\in [t_0,t_1],
\end{equation}
with initial condition $h(t_0)=e\in H$. With this in place we can state our specification of a categorical connection on the bundle $\pi_{\ovA}:\mbp_{\ovA}^{\rm dec}\to\mbbb$; we take the horizontal lift ${ \overline \gamma}^{\rm hor}_u$ of any $\gamma\in \Mor(\mbbb)$, with any source $u\in\pi_{\ovA}^{-1}\bigl(s(\gamma)\bigr)$, to be given by
\begin{equation}\label{E:tildgamcatcon}
{ \overline \gamma}^{\rm hor}_u=\bigl({ \overline\gamma}_u, h^*({ \overline\gamma}_u)\bigr)
\end{equation}
where ${ \overline\gamma}_u\in\pap$ is the $\ovA$-horizontal lift of $\gamma$ initiating at $u$.

The decorating element $h^*({ \overline\gamma}_u)$ can be understood as the `difference' between parallel transport by the connection $\ovA$ and by the connection $\ovA+\tau {C_1}$, as we now explain.

\begin{prop}\label{P:horliftbarAC}  Suppose $(G,H,\alpha,\tau)$ is a Lie crossed module, and  $\ovA$ and ${\hat A}$  are  connection forms on a principal $G$-bundle $\pi:P\to M$, such that
$${\hat A}=\ovA+\tau {C_1},$$
where ${C_1}$ is an $\alpha$-equivariant $L(H)$-valued $1$-form on $P$ that vanishes on vertical vectors.  Let   ${ \hat\gamma}_u$ be the ${\hat A}$-horizontal lift, with inital point $u\in P$,  of a path $\gamma$ on $B$, initiating at $u$, and ${\overline\gamma}_u$ the $\ovA$-horizontal lift, also with initial point $u$, of $\gamma$.
Then  the terminal point of ${ \hat\gamma}_u$ is
\begin{equation}\label{E:tgamhat}
t\bigl({ \hat\gamma}_u\bigr)=t({ \overline\gamma}_u)\tau\bigl(h^*({ \overline\gamma}_u)\bigr),
\end{equation}
where the element $h^*({ \overline\gamma}_u)\in H$ is the final point of the path $[t_0,t_1]\to H: t\mapsto h(t)$  satisfying the differential equation \begin{equation}\label{E:dhuC2}
\begin{split}
 h'(t)h(t)^{-1}& =-{C_1}\bigl({ \overline\gamma}_u'(t)\bigr)\qquad \hbox{for all $t\in [t_0,t_1]$,}\\
 h(t_0) &=e.
 \end{split}
\end{equation}
\end{prop}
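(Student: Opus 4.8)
The plan is to compare the two horizontal lifts ${\overline\gamma}_u$ and ${\hat\gamma}_u$ through the unique $G$-valued path relating them, and to identify that path with $\tau$ applied to the solution $h$ of \eqref{E:dhuC2}. First observe (consistently with the hypotheses) that ${\hat A}=\bar A+\tau C_1$ is a connection: it vanishes on vertical vectors since $\bar A$ and $C_1$ do, and it is ${\rm Ad}$-equivariant since $\bar A$ is and, by the infinitesimal form \eqref{E:liecross1der} of the first crossed-module identity, $\tau\bigl[\alpha(g^{-1})X\bigr]={\rm Ad}(g^{-1})\tau(X)$, so the $\alpha$-equivariance of $C_1$ transfers to ${\rm Ad}$-equivariance of $\tau C_1$. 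Now both ${\overline\gamma}_u$ and ${\hat\gamma}_u$ are smooth paths $[t_0,t_1]\to P$ with the same source $u$ and the same projection $\gamma$, so there is a unique smooth $g\colon[t_0,t_1]\to G$ with $g(t_0)=e$ and ${\hat\gamma}_u(t)={\overline\gamma}_u(t)\,g(t)$ for all $t$. It suffices to prove $g(t)=\tau\bigl(h(t)\bigr)$ for all $t$; evaluating at $t=t_1$ then yields \eqref{E:tgamhat}.

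Next I would derive the differential equation satisfied by $g$. Differentiating ${\hat\gamma}_u(t)={\overline\gamma}_u(t)\,g(t)$ in $P$ gives
\[
{\hat\gamma}_u'(t)={\overline\gamma}_u'(t)\,g(t)+{\overline\gamma}_u(t)g(t)\cdot\bigl(g(t)^{-1}{\dot g}(t)\bigr),
\]
where the first term denotes $(R_{g(t)})_*$ applied to ${\overline\gamma}_u'(t)$ and the second is the fundamental vertical vector at ${\overline\gamma}_u(t)g(t)$ attached to $g(t)^{-1}{\dot g}(t)\in L(G)$. Applying ${\hat A}$: the left side is $0$ by ${\hat A}$-horizontality of ${\hat\gamma}_u$; the first term on the right contributes ${\rm Ad}(g(t)^{-1}){\hat A}_{{\overline\gamma}_u(t)}\bigl({\overline\gamma}_u'(t)\bigr)={\rm Ad}(g(t)^{-1})\tau\bigl(C_1({\overline\gamma}_u'(t))\bigr)$, using ${\rm Ad}$-equivariance of ${\hat A}$ and $\bar A({\overline\gamma}_u'(t))=0$; the second term contributes $g(t)^{-1}{\dot g}(t)$ because ${\hat A}$ is a connection form. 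Hence $g(t)^{-1}{\dot g}(t)=-{\rm Ad}(g(t)^{-1})\tau\bigl(C_1({\overline\gamma}_u'(t))\bigr)$, which, using $g\cdot{\rm Ad}(g^{-1})X=Xg$, is equivalent to
\[
{\dot g}(t)\,g(t)^{-1}=-\tau\bigl(C_1({\overline\gamma}_u'(t))\bigr),\qquad g(t_0)=e.
\]

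Finally I would push the $H$-equation \eqref{E:dhuC2} forward by $\tau$. Set $\phi(t)=\tau\bigl(h(t)\bigr)$; since $\tau\colon H\to G$ is a Lie group homomorphism, $d\tau$ intertwines right translations, so from $h'(t)h(t)^{-1}=-C_1({\overline\gamma}_u'(t))$ we get ${\dot\phi}(t)\phi(t)^{-1}=-\tau\bigl(C_1({\overline\gamma}_u'(t))\bigr)$ with $\phi(t_0)=\tau(e)=e$ (here $\tau$ also denotes $d\tau|_e$ on Lie algebras). Thus $g$ and $\phi$ solve the same first-order linear ODE ${\dot x}(t)x(t)^{-1}=-\tau\bigl(C_1({\overline\gamma}_u'(t))\bigr)$ with the same initial value $e$, so $g\equiv\phi$ by uniqueness. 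In particular $g(t_1)=\tau\bigl(h(t_1)\bigr)=\tau\bigl(h^*({\overline\gamma}_u)\bigr)$, whence $t\bigl({\hat\gamma}_u\bigr)={\hat\gamma}_u(t_1)={\overline\gamma}_u(t_1)\tau\bigl(h^*({\overline\gamma}_u)\bigr)=t({\overline\gamma}_u)\tau\bigl(h^*({\overline\gamma}_u)\bigr)$.

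The only genuinely delicate point is the product-rule computation in $P$ and the attendant bookkeeping of equivariance: in particular the identity $g\cdot{\rm Ad}(g^{-1})X=Xg$ used to pass between the left- and right-logarithmic-derivative forms of the ODE, and the transfer of equivariance from $C_1$ to $\tau C_1$ via \eqref{E:liecross1der}. Once these are in place, the conclusion reduces to uniqueness of solutions of a linear ODE.
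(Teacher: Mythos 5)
Your proof is correct, and the core computation (product rule for $\overline\gamma_u(t)\cdot(\hbox{group element})(t)$, the connection property of ${\hat A}$, and the transfer of equivariance from $C_1$ to $\tau C_1$ via (\ref{E:liecross1der})) is the same bookkeeping the paper does; the difference is in the packaging. The paper runs the argument in the opposite direction: it takes the $H$-valued solution $h$ of (\ref{E:dhuC2}), forms the candidate path $t\mapsto{\overline\gamma}_u(t)\tau\bigl(h(t)\bigr)$, applies ${\hat A}=\ovA+\tau C_1$ to its derivative and shows the result vanishes, so the candidate is the ${\hat A}$-horizontal lift through $u$ by uniqueness of horizontal lifts, and the terminal-point formula is read off. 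You instead start from ${\hat\gamma}_u$, write ${\hat\gamma}_u={\overline\gamma}_u\,g$ for the unique $G$-valued comparison path, derive the right-logarithmic ODE ${\dot g}g^{-1}=-\tau\bigl(C_1({\overline\gamma}_u')\bigr)$, and identify $g$ with $\tau\circ h$ by uniqueness of solutions of that ODE (using that $\tau$ is a homomorphism, so it intertwines the logarithmic derivatives). Both routes are sound; the paper's version avoids introducing $g$ and needs only the standard uniqueness of horizontal lifts, while yours makes the transition function explicit and isolates exactly where the crossed-module identity and the passage between ${\rm Ad}(g^{-1})$-conjugated and right-logarithmic forms of the ODE are used, which is arguably more transparent and reusable (e.g.\ it gives $g(t)=\tau(h(t))$ for all $t$, not just at $t_1$).
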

\begin{proof} Let $h(t)\in H$ be as described above in the context of (\ref{E:dhuC}). Consider now the path
\begin{equation}\label{E:tauht}
[t_0,t_1]\to P:t\mapsto { \overline\gamma}_u(t)\tau\bigl(h(t)\bigr).
\end{equation}
Before proceeding further to the main computation, let us observe that $\tau {C_1}$ is an $L(G)$-valued Ad-equivariant $1$-form on $P$ that vanishes on vertical vectors. In particular, applying $\tau {C_1}$ to a vector of the form
$$v\tau\bigl(h(t)\bigr),$$
where $v\in T_{p}P$ with $p={ \overline\gamma}_u(t)$, produces
$${\rm Ad}\bigl(\tau\bigl(h(t)\bigr)\bigr)^{-1} \tau {C_1}(v).$$

Applying the $1$-form $\ovA+\tau {C_1}$ to the derivative of the path (\ref{E:tauht}) we have
\begin{equation}
\begin{split}
(\ovA+\tau {C_1})\Bigl({\overline\gamma}_u'(t)\tau\bigl(h(t)\bigr) +{ \overline\gamma}_u(t)\tau\bigl(h'(t)\bigr)\Bigr) &\\
&\hskip -2in =0+ \tau {C_1}\Bigl({ \overline\gamma}_u'(t)\tau\bigl(h(t)\bigr)\Bigr) \\
&\hskip -1.25in +{\bar A}\Bigl( { \overline\gamma}_u(t)\tau\bigl(h'(t)\bigr)\Bigr)
+ \tau {C_1}\Bigl({ \overline\gamma}_u(t)\tau\bigl(h'(t)\bigr)\Bigr)\\
&\hskip -2in =0+ {\rm Ad}\bigl(\tau\bigl(h(t)\bigr)\bigr)^{-1} \tau {C_1}\Bigl({ \overline\gamma}_u'(t)\Bigr)+\tau\bigl(h(t)^{-1}h'(t)\bigr) +0\\
&\hskip -2in =\tau\bigl(h(t)^{-1}\bigr)\tau\Bigl[{C_1}\Bigl({ \overline\gamma}_u'(t)\Bigr) + h'(t)h(t)^{-1}\Bigr]\tau\bigl(h(t)\bigr)\\
&\hskip -2in =0,
\end{split}
\end{equation}
upon using the equation (\ref{E:dhuC}) satisfied by $h(\cdot)$. Thus the path (\ref{E:tauht}) is horizontal with respect to the connection for ${\bar A}+\tau {C_1}$. Its terminal point is
$${\overline\gamma}_u(t_1)h(t_1)=t({ \overline\gamma}_u)\tau\bigl(h^*({ \overline\gamma}_u)\bigr),$$
 by definition of $h^*({ \overline\gamma}_u)$. \end{proof}
 
 The significance of the preceding result is that the decorating element $h^*({ \overline\gamma}_u)$ arises from parallel transport of $u$ by a second connection form
 $\ovA+\tau {C_1}$. A larger point is that   {\em a categorical connection on $\catdeca\to\mbbb $ corresponds to a differential geometric connection on the bundle } $P\to M$. We can state this more formally.
 
 \begin{prop}\label{P:catdgconn} Let   $\ovA$ be  a  connection form  on a principal $G$-bundle $\pi:P\to M$, and $\pap\to\mpm$ the  principal $G$-bundle of $\ovA$-horizontal paths $[t_0,t_1]\to P$ and $\mpm$ the space of paths $[t_0,t_1]\to M$.  As usual, paths that differ by time-translation are identified and the paths are $C^\infty$, constant near initial and final times $t_0$ and $t_1$. Let $\mbg$ be a categorical group with associated Lie crossed module $(G, H,\alpha,\tau)$.  Now suppose $\ovA$ is a connection form on $P$ and ${C_1}$ is an $L(H)$-valued $1$-form on $P$ that vanishes on vertical vectors and is $\alpha$-equivariant as in Proposition \ref{P:horliftbarAC}; thus $A=\ovA+\tau {C_1}$ is a connection form on $P$. For $\ovg\in\pap$ let $h^*(\ovg)\in H$ be also as in Proposition \ref{P:horliftbarAC}.  Now consider the principal categorical bundle $\catdeca\to \mbbb$, with structure categorical group $\mbg$, described above in the context of (\ref{E:dhuC}). Then a categorical connection on this bundle is provided by the following specification: for any path $\gamma\in \Mor(\mbbb)$ and object $u\in \Obj(\catdeca)$ lying in the fiber over $\gamma(t_0)$ we take as horizontal lift the morphism $\bigl(\ovg_u, h^*(\ovg)\bigr)\in\Mor(\catdeca)$, where $\ovg_u$ is the $\ovA$-horizontal lift of $\gamma$ with initial point $u$. \end{prop}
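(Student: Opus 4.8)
The plan is to verify directly that the assignment $\gamma\mapsto \bigl(\ovg_u, h^*(\ovg_u)\bigr)$ satisfies the four defining conditions (h1)--(h4) of a categorical connection. Conditions (h1) and (h2) are immediate from the construction: $\ovg_u$ is the $\ovA$-horizontal lift of $\gamma$ with $\ovg_u(t_0)=u$, and by the source/target formulas \eqref{E:stdec} the morphism $\bigl(\ovg_u, h^*(\ovg_u)\bigr)$ has source $u$ and target $t(\ovg_u)\tau\bigl(h^*(\ovg_u)\bigr)$ — which, by Proposition \ref{P:horliftbarAC}, is also the terminal point of the $A$-horizontal lift of $\gamma$ through $u$; this last remark is only a consistency check, not needed for (h1)--(h2).

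The core of the argument is to record two elementary properties of $\ovg\mapsto h^*(\ovg)$ that follow from its defining ODE $h'(t)h(t)^{-1}=-C_1\bigl(\ovg'(t)\bigr)$, $h(t_0)=e$ from Proposition \ref{P:horliftbarAC}. First, \emph{behaviour under concatenation}: if $\overline\eta$ begins where $\ovg$ ends, solving the ODE piecewise along $\overline\eta\circ\ovg$ shows that on the second leg one solves the $\overline\eta$-equation with initial value $h^*(\ovg)$ instead of $e$; since the solution of a right-logarithmic ODE with a translated initial value is the right-translate of the solution with initial value $e$, one gets $h^*(\overline\eta\circ\ovg)=h^*(\overline\eta)\,h^*(\ovg)$. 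Second, \emph{$G$-equivariance}: using the $\alpha$-equivariance of $C_1$, the ODE governing $h^*(\ovg\cdot g)$ is obtained from that governing $h^*(\ovg)$ by applying the homomorphism $\alpha(g^{-1}):H\to H$, which fixes $e$, so $h^*(\ovg\cdot g)=\alpha(g^{-1})\bigl(h^*(\ovg)\bigr)$. Both claims use only uniqueness of ODE solutions on $H$ and the stated equivariance of $C_1$.

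With these in hand, condition (h4) is short: by the right-action formula \eqref{gract2}, $\bigl(\ovg_u,h^*(\ovg_u)\bigr)(e,g)=\bigl(\ovg_u g,\alpha(g^{-1})(h^*(\ovg_u))\bigr)$, while $\ovg_{ug}=\ovg_u g$ by equivariance of $\ovA$-horizontal lifts and $h^*(\ovg_u g)=\alpha(g^{-1})\bigl(h^*(\ovg_u)\bigr)$ by the second property; the two sides agree. For condition (h3), set $v=t\bigl(\ovg^{\mathrm{hor}}_u\bigr)=t(\ovg_u)\tau\bigl(h^*(\ovg_u)\bigr)$ and expand $\overline\zeta^{\mathrm{hor}}_v\circ\ovg^{\mathrm{hor}}_u$ via the composition law \eqref{E:compmor}--\eqref{E:tilgamcomp} of $\catdeca$. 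For the $\pap$-component, equivariance of $\ovA$-horizontal lifts gives $\overline\zeta_v=\overline\zeta_{t(\ovg_u)}\tau\bigl(h^*(\ovg_u)\bigr)$, so $\overline\zeta_v\tau\bigl(h^*(\ovg_u)\bigr)^{-1}\circ\ovg_u=\overline\zeta_{t(\ovg_u)}\circ\ovg_u=\overline{\zeta\circ\gamma}_u$ by uniqueness of horizontal lifts; this settles the first component. For the $H$-component one must check $h^*(\overline{\zeta\circ\gamma}_u)=h^*(\ovg_u)\,h^*(\overline\zeta_v)$: the concatenation property gives $h^*(\overline{\zeta\circ\gamma}_u)=h^*(\overline\zeta_{t(\ovg_u)})\,h^*(\ovg_u)$, while the $G$-equivariance property together with the crossed-module identity $\alpha\bigl(\tau(h)^{-1}\bigr)(h')=h^{-1}h'h$ from \eqref{liecross} gives $h^*(\overline\zeta_v)=\alpha\bigl(\tau(h^*(\ovg_u))^{-1}\bigr)\bigl(h^*(\overline\zeta_{t(\ovg_u)})\bigr)=h^*(\ovg_u)^{-1}h^*(\overline\zeta_{t(\ovg_u)})h^*(\ovg_u)$, and left-multiplying by $h^*(\ovg_u)$ matches. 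This is the step I expect to be the main obstacle: keeping the ordering conventions straight (the reversed product $h_1h_2$ in \eqref{E:compmor}, the right-to-left composition of paths) and noticing that the $\alpha(\tau(\cdot))$-twist produced by re-basing $\overline\zeta$ at $v$ is precisely cancelled by the conjugation in the crossed-module relation is where all the bookkeeping resides; the remaining verifications are routine.
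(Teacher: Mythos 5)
Your verification is correct: conditions (h1)--(h2) are immediate, your two auxiliary facts about $h^*$ (multiplicativity under concatenation, $h^*(\overline\eta\circ\ovg)=h^*(\overline\eta)h^*(\ovg)$, and the equivariance $h^*(\ovg g)=\alpha(g^{-1})\bigl(h^*(\ovg)\bigr)$) do follow from uniqueness of solutions of the ODE $h'h^{-1}=-C_1(\ovg')$ together with the $\alpha$-equivariance of $C_1$, and your bookkeeping in (h3) — the re-basing of $\overline\zeta$ at $v=t(\ovg_u)\tau\bigl(h^*(\ovg_u)\bigr)$, the reversed product $h_1h_2$ in the composition law, and the cancellation via the Peiffer identity $\alpha(\tau(h))(h')=hh'h^{-1}$ — comes out right. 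Note, however, that the paper itself does not spell out a proof of this proposition: it states the result immediately after Proposition \ref{P:horliftbarAC} and defers the categorical details to the earlier work \cite{CLS2geom}. The route it indicates is conceptually different from yours: there, $h^*(\ovg_u)$ is interpreted through Proposition \ref{P:horliftbarAC} as the discrepancy between $\ovA$-parallel transport and parallel transport by the second connection $A=\ovA+\tau C_1$, so that (h3) and (h4) are inherited from the standard functoriality and $G$-equivariance of ordinary parallel transport for the pair $(\ovA,A)$, and the categorical connection is literally the data of two ordinary connections. Your argument is more self-contained and elementary — it never needs the auxiliary connection $A$ and makes explicit where the crossed-module (Peiffer) identity enters — while the paper's viewpoint buys the structural explanation of why such a specification must be a categorical connection. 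Both are valid; your direct check fills in precisely the verification the paper leaves to the reference.
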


 \subsection{Categorical connections for  the doubly decorated bundle} We proceed now to a higher version of Proposition \ref{P:catdgconn}. The decorated category  $\catdeca$ has as objects the points $p\in P$ and as morphisms the pairs $(\ovg, h)\in\pap\times H$.  We consider next a doubly decorated categorical bundle 
 $$\mbp_2\to \mbbb_2.$$
 The structure group is a second categorical group $\mbg_2$, with corresponding Lie crossed module being $(H\rtimes_{\alpha}G, K, \alpha_2,\tau_2)$. 
 
The objects of $\mbbb_2$ are the morphisms $\gamma$ of $\mbbb$ (that is, paths on $M$):
\begin{equation} \Obj(\mbbb_2) =\Mor(\mbbb),\end{equation} 
 and the morphisms are paths $\Gamma:s\mapsto\Gamma_s\in\mpm$ of paths on $M$.
 
The objects of $\mbp_2$ are the morphisms $(\ovg, h)$ of $\pdbap$:
\begin{equation} \Obj(\mbp_2) =\Mor(\mbp).\end{equation} 
  Morphisms of  $\mbp_2$ are of the form $\bigl(\ovG  h, k\bigr)$ where 
$$\ovG h:[s_0,s_1]\to \pdbap: s\mapsto (\ovG_s, h_{s})$$
is a path of  decorated paths on $P$,  horizontal with respect to a connection  such as $\Omega$, and $k\in K$ is a `higher decoration.' 

A categorical connection on $\mbp_2\to \mbbb_2$ picks out a special decoration 
$$k_{s_1} \in K $$
for each  horizontal decorated path $\ovG h$.  Applying $\tau_2$ to this special element of $K$ yields an element of $H\rtimes_{\alpha}G$. The object
\begin{equation}
(\ovg_{s_1}, h_{s_1})\tau_2(k_{s_1})\in\pdbap
\end{equation} 
is the terminal point of  $\bigl(\ovG h, k_{s_1}\bigr)$:
\begin{equation}\label{E:termpt}
t\Bigl(\bigl(\ovG h, k_{s_1}\bigr)\Bigr)=(\ovG_{s_1}, h_{s_1})\tau_2(k_{s_1}).
\end{equation}
In more detail, from the categorical connection point of view, the horizontal lift of the path of paths
$$[s_0,s_1]\to \mpm:s\mapsto\Gamma_s,$$
viewed as a morphism of $\mbbb_2$, with initial point $(\ovG_{s_0}, h_{s_0})$, is   of the form
\begin{equation}
  \bigl(\ovG h, k_{s_1}\bigr),
\end{equation}
where
$$\ovG h:s\mapsto (\ovG_s, h_{s})$$
is horizontal with respect to a fixed given connection on the bundle
$$\pdbap\to\mpm,$$
and $k_{s_1}$ is a special element of $K$ associated to the path $\ovG h$.
The target of this horizontal lift morphism is 
\begin{equation}\label{E:termpt2}
(\ovG_{s_1}, h_{s_1})\tau_2(k_{s_1}).
\end{equation}
If the categorical connection is such that $\tau_2(k_{s_1})$ lies in the subgroup $H\subset H\rtimes_{\alpha}G$ then the target is
\begin{equation}\label{E:termpt3}
\bigl(\ovG_{s_1}, h_{s_1} \tau_2(k_{s_1})\bigr).
\end{equation}
  
 We focus on a type of categorical connection on $\mbp_2\to\mbbb_2$ that arises from  a differential geometric connection $\Omega$ on the principal $ (H\rtimes_{\alpha}G)$-bundle 
 \begin{equation}\label{E:decdecbun2}
\pi^{\rm dec}:  \pdbap  \to\mpm: \ovG h \mapsto \pi_{\ovA}(\ovG)=\pi\circ\ovG 
 \end{equation}
 with an added `horizontal' term. We are constructing an analog of the connection form $A+\tau {C_1}$ described in Proposition \ref{P:catdgconn}.
 Instead of the $L(H)$-valued $1$-form ${C_1}$ on $P$ we now use an $L(K)$-valued $1$-form on $\pdbap$ given by
 \begin{equation}\label{E:Cdec}
 \begin{split}
 C^{\rm dec}_{(\ovg, h)}(\ovv+X) &\\
 &\hskip -.5in ={\rm Ad}(h^{-1})\left[C^R_2|_{\ovg(t_2)}\bigl(\ovv(t_1)\bigr)-C^L_2|_{\ovg(t_1)}\bigl(\ovv(t_0)\bigr)  +\int_{t_0}^{t_1}{D}\bigl(\ovv(t), \ovg'(t)\bigr)\,dt\right],
 \end{split}
 \end{equation}
 where we have kept in mind the expression for $\Omega$ given in (\ref{deccon})  and $C^{L,R}_2$ are $L(K)$-valued $1$-forms, vanishing on vertical vectors and  $\alpha_2$-equivariant., and ${D}$ is an $L(K)$-valued $2$-form that vanishes when contracted with any vertical vector and is also $\alpha_2$-equivariant. This equivariance property, spelt out for ${D}$, is
 \begin{equation}\label{E:Eequiv}
 {D}_{pg}(v,w)=\alpha_2(g^{-1}){D}_p(v,w)
 \end{equation}
 for all $p\in P$,  vectors $v, w\in T_pP$, and in the notation $\alpha_2(g)$
 we have identified $G$ as a subgroup of $H\rtimes_{\alpha}G$.
 
 Then, analogously to Proposition \ref{pr:decconnect2} (from where we also use the notation), we have
 \begin{equation}\label{E:Cdecequiv}
 C^{\rm dec}_{(\ovg,h)h_1g_1}{\mathcal R}_{(h_1,g_1)}|_{*}(\ovv,X)={\rm Ad}(h_1g_1)^{-1} C^{\rm dec}_{(\ovg,h)}(\ovv,X)
 \end{equation}
 for all $h, h_1\in H$, $g_1\in G$, $\ovg\in\pap$, $\ovv\in T_{\ovg}\pap$ and $X\in T_hH$.  Moreover,
 \begin{equation}\label{E:Cdecvert}
  C^{\rm dec}_{(\ovg,h)}r_{(\ovg,h)}(Y_1+Z_1) =0
 \end{equation}
 for all $(Y_1, Z_1)\in L(H)\oplus L(G)$.  We omit the proof which is a straightforward analog of that of Proposition \ref{pr:decconnect2}.

  As consequence we obtain a connection form
 \begin{equation}\label{E:Omdec}
 \Omega^{\rm dec}= \Omega+\tau_2 C^{\rm dec}
 \end{equation}
 on the decorated bundle $\pi^{\rm dec}:  \pdbap  \to\mpm$.
 
 The form $ C^{\rm dec}$ then provides a categorical connection $\mbba_2$ on the categorical bundle
 $$\mbp_2\to\mbbb_2.$$
 To a morphism $\ovG h\in\Mor(\mbp_2)$ it associates an element
 $$k^*(\ovG h)\in K,$$
 which is the terminal value $k_{s_1}$ of the solution for  $K$-valued path $[s_0,s_1]\to K:s\mapsto k_s$ that satisifies the differential equation
 \begin{equation}
 {\dot k}_sk_s^{-1}=- C^{\rm dec}_{(\ovG_s,h_s))}({\dot\ovG}_s, {\dot h}_s)
 \end{equation}
 subject to $k_{s_0}=e$.  Here, as usual, a dot over a letter denotes the derivative  $\partial_s$ with respect to $s$.

\section{Curvature conditions for reduction to  holonomy bundle}\label{s:reduct}

Consider a general connection  $\Omega$ on a decorated bundle $ {\mathcal P}_{\bar A}^{\rm dec}P$.  Let $ {C_1}$
 be an $L(H)$-valued $1$-form on $P$ that is equivariant and vanishes on vertical vectors.  Then we can associate to each $\ovg\in\pap$ a special decoration $h^*({{\ovg}})$ 
 that is given by
\begin{equation}\label{E:hstargam}
h^*({{\ovg}})=h_{\ovg}(t_1),
\end{equation}
where $ [t_0,t_1]\to H: t\mapsto {  h}_{\tlg}(t)$ is the solution of
$${  h}'_{\ovg}(t){  h}_{\ovg}(t)^{-1}=-  {C_1}\bigl({\ovg}'(t)\bigr),$$
with initial value ${  h}_{\ovg}(t_0)=e$.

Then we have a sub-bundle 
$ {\bar {\mathcal P}}_{\bar A}^{\rm dec}P$  of $ {\mathcal P}_{\bar A}^{\rm dec}P$ specified by:
\begin{equation}\label{E:subbundle}
{\bar  {\mathcal P}}_{\bar A}^{\rm dec}P:=\{\bigl({\ovg}, h^*({{\ovg}})^{-1}\bigr)\,| \, \ovg\in\pap\}\subset {\mathcal P}_{\bar A}^{\rm dec}P.
\end{equation}
More precisely, 
\begin{equation}\label{E:subbun}
{\bar  {\mathcal P}}_{\bar A}^{\rm dec}P \to \mpm: \bigl({\ovg}, h^*({{\ovg}})^{-1}\bigr)\mapsto \pi\circ\ovg
\end{equation} 
is a principal $G$-bundle.

Our goal in this section is to determine a type of connection   $\hat\Omega$ on $ {\mathcal P}_{\bar A}^{\rm dec}P$ that reduces to a connection on the sub-bundle ${\bar  {\mathcal P}}_{\bar A}^{\rm dec}P$. Subsection \ref{ss:vardiff} serves as a technical appendix to this section and presents some of the background computations for   the proof of the main result Proposition \ref{P:reduction}.  
Later in subsection \ref{ss:holb} we present  a description of the notion of a holonomy bundle and a result of Ambrose and Singer  \cite{AmSi1953}, that form  a motivational background for   our investigations.

\subsection{Statement of the result}\label{ss:main}

Let us first summarize  the notation and framework.

Let $\ovA$  and $A$ be  connection forms on a principal $G$-bundle $\pi:P\to M$, and suppose $(G, H, \alpha,\tau)$ and $(H\rtimes_{\alpha}G, K,\alpha_2,\tau_2)$ are Lie crossed modules. (As noted before, we will use only the semidirect products here, not  the target maps $\tau$ and $\tau_2$). Let ${C}$ be an $L(H\rtimes_{\alpha}G)$-valued $1$-form on $P$ that vanishes on vertical vectors and satisfies the equivariance
 \begin{equation}\label{E:C0equiv}
 {C}|_{pg}(vg)={\rm Ad}(g^{-1}){C}|_p(v)
 \end{equation}
 for all $p\in P$, $v\in T_pP$ and $g\in H$. Here, as usual, on the right  we take ${\rm Ad}(g^{-1})$ to be an operator on $L(H\rtimes_{\alpha}G)$. We decompose $C$ into its component in $L(H)$ and the component in $L(G)$:
 \begin{equation}
 C=C_0+C_1,
 \end{equation}
 where $C_0$ takes values in $L(G)$ and $C_1$ in $L(H)$.

 Let ${B} $ be an $L(H\rtimes_{\alpha}G)$-valued $1$-form on $P$ that is equivariant analogously to $C$ and vanishes  when contracted on any vertical vector; we write
 \begin{equation}\label{E:BB01}
 B=B_0+B_1,
 \end{equation}
 where $B_0$ is the $L(G)$-component of $B$ and $B_1$ is the $L(H)$-component. 	`
 	
  Let $\pap$ be, as before, the path space of  $\ovA$-horizontal paths $\ovg$ on $P$, and $\mpm$ the path space for $M$.  On the  decorated bundle  
  $$\pdbap\stackrel{\rm def}{=}\pap\times H\to \mpm$$
  consider the connection form $\hat\Omega$ given by
 \begin{equation}\label{E:hatOmfull2}
\begin{split}
  {\hat\Omega}_{{\ovg}, h}({\ovv}+X)  
&={\rm Ad}(h^{-1})\left[{\ovA}\bigl(\ovv(t_0)\bigr) + {\rm Ad}\bigl(g^*({\ovg})^{-1} \bigr){C}|_{\ovg(t_1)}\bigl(\ovv(t_1)\bigr)\right. \\
&\hskip 1in \left. -{C}|_{\ovg(t_0)}\bigl(\ovv(t_0)\bigr)\right. \\
&\left.\hskip  1in+\int_{t_0}^{t_1}{\rm Ad}\bigl(g_{\ovg(t)}^{-1}\bigr) B_{\ovg(t)}({\ovv}(t), {\ovg}'(t)\bigr) \,dt +Xh^{-1}\right]
\end{split}
\end{equation} 
for all $\ovg\in\pap$, $h\in H$, $\ovv\in T_{\ovg}\pap$ and $X\in T_hH$.
 Suppose that   $B_1$ and ${C_1}$ are related by
 \begin{equation}\label{E:BDC0}
 B_1 + d{{  C}_1}+\frac{1}{2}[{{  C}_1},{{  C}_1}] =0.
 \end{equation}
 Consider a path $[s_0,s_1]\to\pap: s\mapsto { \Gamma}_s$ given by a $C^\infty$ map
 $$[t_0,t_1]\times[s_0,s_1]\to M:(t,s)\mapsto\Gamma_s(t)= \Gamma(t,s)$$
 and let $\ovg\in\pap$ be an $\ovA$-horizontal lift of the initial path $\Gamma_{s_0}$. Let $h^*(\ovg)$ be the value $h_{s_0}(t_1)$ where  $[t_0,t_1]\to H:t\mapsto h_{s}(t)$ is the solution of the equation
 \begin{equation}
 \begin{split}
 h_{s}'(t)+{C_1}\bigl(\ovg'(t)\bigr)h_{s_0}(t)&=0\quad\hbox{ for all $t\in [t_0,t_1]$; }\\
 h_{s_0}(t_0)&=e.
 \end{split}
  \end{equation}
  Let
  \begin{equation}\label{E:hstargstar2}
\begin{split}
h^*(\ovg) &= h_{\ovg}(t_1)\\
g^*(\ovg) &=\tau\bigl(h^*(\ovg)\bigr).
\end{split}
\end{equation}
Then the parallel transport by $\hat\Omega$ of an element $(\ovg,h^*(\ovg)^{-1})$ along the path 
$$[s_0,s_1]\to\pap: s\mapsto { \Gamma}_s$$
 given by a $C^\infty$ map
 $$[t_0,t_1]\times[s_0,s_1]\to M:(t,s)\mapsto { \Gamma}_s(t)$$
is given by
\begin{equation}
s\mapsto \bigl({\hat\Gamma}_s, h_s^*(\tlG_s)\Bigr)
\end{equation}
where $s\mapsto{\hat\Gamma}_s$ is the horizontal lift of $s\mapsto\Gamma_s$ by the connection form $\hat\omega$ on $\pap$ that is given by 
  \begin{equation}\label{E:hatOmhat3}
\begin{split}
& {\hat\omega}_{{\ovg}}({\ovv}) \\
&={\rm Ad}(h^{-1})\left[{\ovA}\bigl(\ovv(t_0)\bigr) +{\rm Ad}\bigl(g^*({\ovg})^{-1} \bigr){C}|_{\ovg(t_1)}\bigl(\ovv(t_1)\bigr)\right. \\
&\hskip 1in \left. -{C}|_{\ovg(t_0)}\bigl(\ovv(t_0)\bigr)\right. \\
&\left.\hskip  1in+\int_{t_0}^{t_1}{\rm Ad}\bigl(g_{\ovg(t)}^{-1}\bigr)   B_{\ovg(t)}({\ovv}(t), {\ovg}'(t) \,dt \right]
\end{split}
\end{equation}

 \begin{prop}\label{P:reduction} With notation and framework as above, suppose the relations (\ref{E:C0C1}) hold. Then parallel transport by the connection $\hat\Omega$ carries elements of the form $\bigl(\ovg, h^*(\ovg)^{-1}\bigr)$ to elements of the same form.  \end{prop}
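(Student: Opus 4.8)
The strategy is to write down explicitly the differential equation governing parallel transport by $\hat\Omega$ of the point $\bigl(\ovg, h^*(\ovg)^{-1}\bigr)$, and to show that the condition defining the sub-bundle $\bar{\mathcal P}_{\bar A}^{\rm dec}P$ is preserved by this flow. Concretely, let $s\mapsto\Gamma_s$ be the given path on $\mpm$ and let $s\mapsto\hat\Gamma_s$ be the $\hat\omega$-horizontal lift to $\pap$ with initial value $\hat\Gamma_{s_0}=\ovg$. Then $\Omega$-parallel transport produces a path $s\mapsto(\hat\Gamma_s,\kappa_s)$ in $\pdbap$ with $\kappa_{s_0}=h^*(\ovg)^{-1}$ and $\kappa_s$ determined by the $L(H)$-part of the horizontality equation $\hat\Omega_{(\hat\Gamma_s,\kappa_s)}\bigl(\partial_s\hat\Gamma_s,\dot\kappa_s\bigr)=0$, exactly as in Proposition \ref{P:Omegahorlift}. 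The claim to be proved is the identity
\begin{equation*}
\kappa_s = h^*(\hat\Gamma_s)^{-1}\qquad\text{for all }s\in[s_0,s_1].
\end{equation*}

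The key steps, in order: (1) Split $\hat\Omega$ into its $L(G)$-component and its $L(H)$-component; the $L(G)$-part being zero just says $s\mapsto\hat\Gamma_s$ is $\hat\omega$-horizontal, which we have taken as the definition of $\hat\Gamma_s$, so only the $L(H)$-equation constrains $\kappa_s$. Write that equation as $\dot\kappa_s\kappa_s^{-1}=-\Phi(s)$, where $\Phi(s)$ collects the $L(H)$-valued endpoint and integral terms of $\hat\Omega$ evaluated on $\partial_s\hat\Gamma_s$ (the terms built from $C_1$, $B_1$, together with the ${\rm Ad}(g^*(\hat\Gamma_s)^{-1})$ twist on the $t_1$-endpoint term). (2) On the other side, differentiate $s\mapsto h^*(\hat\Gamma_s)$ with respect to $s$. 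This requires the variational formula for how the solution of $h'(t)h(t)^{-1}=-C_1(\hat\Gamma_s'(t))$ at $t=t_1$ depends on $s$; this is precisely the "background computation" promised in subsection \ref{ss:vardiff}. The standard first-variation-of-an-ODE argument gives $\partial_s h^*(\hat\Gamma_s)$ as an integral over $t\in[t_0,t_1]$ of $\mathrm{Ad}$-conjugated terms in $dC_1$ and $C_1$ contracted against $\partial_s\hat\Gamma_s$ and $\partial_t\hat\Gamma_s$, plus the endpoint contributions from $\partial_s\hat\Gamma_s(t_0)$ and $\partial_s\hat\Gamma_s(t_1)$. (3) Use the Maurer–Cartan-type relation (\ref{E:BDC0}), namely $B_1 = -dC_1 - \tfrac12[C_1,C_1]$, to rewrite the $dC_1$-term appearing in the variation of $h^*$ as the $B_1$-integral term appearing in $\Phi(s)$; the $\tfrac12[C_1,C_1]$ term should be exactly what is needed to reconcile the difference between the naive $t$-derivative of $h_s(t)^{-1}$-conjugation and the group-theoretic bookkeeping (this is where the ${\rm Ad}(g_{\ovg(t)}^{-1})$ twists and the relation $\tau(C_1) = $ the relevant $L(G)$-shift enter). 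Conclude that $\partial_s\bigl(h^*(\hat\Gamma_s)^{-1}\bigr)\cdot h^*(\hat\Gamma_s) = -\Phi(s)$, i.e. $h^*(\hat\Gamma_s)^{-1}$ satisfies the same ODE as $\kappa_s$, with the same initial value at $s=s_0$; uniqueness of solutions then forces $\kappa_s=h^*(\hat\Gamma_s)^{-1}$ for all $s$, which is exactly the statement that the transported point lies in $\bar{\mathcal P}_{\bar A}^{\rm dec}P$.

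\textbf{Main obstacle.} The hard part is step (3): matching the variation of the path-ordered exponential defining $h^*$ against the connection data. The derivative $\partial_s h^*(\hat\Gamma_s)$ naturally produces a curvature-like expression $\bigl(dC_1 + \tfrac12[C_1,C_1]\bigr)(\partial_s\hat\Gamma_s,\partial_t\hat\Gamma_s)$ integrated over $t$ — but only because $t\mapsto\hat\Gamma_s(t)$ is $\ovA$-horizontal and $s\mapsto\hat\Gamma_s$ is $\hat\omega$-horizontal, which is what kills the spurious $\ovA(\partial_t\hat\Gamma_s)$ and $\hat\omega(\partial_s\hat\Gamma_s)$ contributions and allows the double integral $\int\!\int$ in the second variation to be discarded. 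Getting all the $\mathrm{Ad}$-conjugations by $g^*$, by $g_{\ovg(t)}$, and by $h_s(t)$ to line up — in particular verifying that the endpoint term ${\rm Ad}(g^*(\ovg)^{-1})C(\cdot(t_1))$ in $\hat\Omega$ is precisely the "corrected" endpoint term that makes $h^*(\hat\Gamma_s)^{-1}$ transform correctly under the $H\rtimes_\alpha G$-action — is the delicate bookkeeping, and it is the reason the hypothesis is stated as the flatness condition (\ref{E:BDC0}) rather than something weaker. I would carry out this reconciliation first on the integral term, then separately check the two endpoint terms, and only then assemble the final ODE comparison.
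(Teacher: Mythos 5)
Your proposal follows essentially the same route as the paper: you derive the $L(H)$-part of the $\hat\Omega$-horizontality condition as an ODE for the decoration (the paper's equation (\ref{E:flatdothdot2})), compute $\partial_s h^*(\hat\Gamma_s)$ via the first-variation / non-abelian Stokes formula (the paper's subsection \ref{ss:vardiff}, culminating in (\ref{E:hsexpress2})), and match the two ODEs using the relations (\ref{E:C0C1}), concluding by uniqueness of solutions with equal initial values. This is the paper's argument, so the plan is sound; the only slight inaccuracy is in your obstacle discussion, since the variational formula itself (Proposition \ref{P:nonabstokes}) holds without invoking $\ovA$- or $\hat\omega$-horizontality.
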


 The remainder of this section is devoted to an understanding and proof of this result.

\subsection{A pullback connection} Let us consider on the bundle $\pi:P\to M$    a second connection form
\begin{equation}\label{E:Abarprim}
\ovA'=\ovA+ \tau C_1.
\end{equation}
Corresponding to this there is a bundle of horizontal paths
\begin{equation}
\papp\to \mpm,
\end{equation}
and a   decorated bundle
$$\pi^{\rm dec}:\papp\times H\to\mpm.$$
 On this decorated bundle we have the connection form $\Omega$ given by
\Beq\label{E:flatOmega}
\begin{split}
&  \Omega_{{\tlg}, h}({\tlv}+X) \\
&={\rm Ad}(h^{-1})\left[{\ovA}\bigl(\ovv(t_0)\bigr) + C^R|_{\tlg(t_1)}({\tlv}(t_1))- C^L|_{\tlg(t_0)}({\tlv}(t_0)) \right. \\
&\left. \hskip 1in +   \int_{t_0}^{t_1}{B}_{\tlg(t)}\bigl({\tlv}(t), {\tlg}'(t)\bigr)\,dt+Xh^{-1}\right],
\end{split}
\Eeq
with ${B_1}$ being an   $L(H)$-valued $2$-form on $P$ that vanishes on vertical vectors and has the usual equivariance property.   

Recall the bijection
\begin{equation}\label{E:mct2}
\mct:\pap\to\papp:\ovg\mapsto \mct(\ovg)
\end{equation}
where $\mct(\ovg)$ is the $\ovA'$-horizontal path on $P$ that has the same initial point $\ovg(t_0)$ as $\ovg$ and projects down to $\pi\circ\ovg$.  At the level of the decorated bundles we define the map
\begin{equation}\label{E:mctdec}
\mct_d:\pap\times H\to\papp \times H:(\ovg, h) \mapsto \bigl(\mct(\ovg), h^*({\ovg})^{-1}h\bigr)
\end{equation}
where $h^*(\ovg)$ is as in (\ref{E:hstargam}).
The motivation for this definition arises from the target formula (\ref{E:tgamhat}) which says that
$$t\bigl(\mct(\ovg)\bigr)=t({ \overline\gamma})\tau\bigl(h^*({ \overline\gamma})\bigr);$$
this implies
\begin{equation}\label{E:mctdectarget}
\begin{split}
t\bigl(\mct(\ovg), h^*({\ovg})^{-1}h\bigr) &=t\bigl(\mct(\ovg)\bigr)\tau\bigl( h^*({\ovg})^{-1}\bigr)\tau(h)\\
& =t(\ovg)\tau(h)\\
&=t(\ovg, h).
\end{split}
\end{equation}
In Proposition \ref{P:isomor} we have worked out a `derivative' map
$$\mct_*:T_{\ovg}\pap\to T_{\mct(\ovg)}\papp. $$
With this in mind it is natural to take the derivative of $\mct_d$ to be given by
\begin{equation}\label{E:mctstardec}
\begin{split}
(\mct_d)_*:T_{(\ovg, h) }\Bigl(\pap\times H\Bigr) &\to T_{\mct_d(\ovg, h) }\Bigl(\papp \times H\Bigr)\\
& (\ovv, hX) \mapsto   \Bigl(\mct_*\ovv,  h^*({\ovg})^{-1}hX\Bigr).
\end{split}
\end{equation}

Let us work out the pullback 
\begin{equation}\label{E:pbOmega}
{\hat\Omega}\stackrel{\rm def}{=} \mct_d^*\Omega.
\end{equation}
For this the crucial observation has been noted in the context of (\ref{E:Tstarvhor}):
$$\hbox{$\mct_*(\ovv)(t)-{\ovv}(t)g_{\ovg}(t)$ is a vertical vector in $T_{\mct(\ovg)(t)}P$,}
$$
where 
$$g_{\ovg}(t)=\tau\bigl(h_{\ovg}(t)\bigr),$$
with $t\mapsto h_{\ovg}(t)$ being the $H$-valued path satisfying
\begin{equation}
h_{\ovg}'(t)h_{\ovg}(t)^{-1}=-C_1\bigl(\ovg'(t)\bigr)
\end{equation}
with initial value $e$.  We have then the parallel transport terms
\begin{equation}\label{E:hstargstar}
\begin{split}
h^*(\ovg) &= h_{\ovg}(t_1)\\
g^*(\ovg) &=\tau\bigl(h^*(\ovg)\bigr).
\end{split}
\end{equation}

Let us also recall that Ad-equivariance of the $L(H)$-valued forms $B_1$ and $C_1^{R,L}$ given in (\ref{propD}) and (\ref{propC}). As  consequence we obtain
\Beq\label{E:flatOmegapb}
\begin{split}
& {\hat\Omega}_{{\ovg}, h}({\ovv}+X) \\
&={\rm Ad}(h^{-1})\left[{\hat\omega}_{(\ovg,h)}(\ovv)+{\rm Ad}\bigl(g^*({\ovg}) \bigr)^{-1} C^R_1|_{\ovg(t_1)}({\ovv}(t_1))- C^L_1|_{\ovg(t_0)}({\ovv}(t_0)) \right. \\
&\left. \hskip 1in + \int_{t_0}^{t_1} {\rm Ad}\bigl(g_{\ovg(t)}^{-1}\bigr)   {B_1}|_{\ovg(t)}({\ovv}(t), {\ovg}'(t)\bigr) \,dt+Xh^{-1}\right],\\
\end{split}
\Eeq
where the term ${\hat\omega}_{(\ovg,h)}(\ovv)$ is given explicitly by
\begin{equation}\label{E:omgamh}
\begin{split}
{\hat\omega}_{(\ovg,h)}(\ovv) &\stackrel{\rm def}{=}
\omega_{\mct({\ovg})}(\mct_*{\ovv}) \\
&=A\bigl(\ovv(t_0)\bigr) +{\rm Ad}\bigl(g^*({\ovg}) \bigr)^{-1}{C^R_0}\bigl(\ovv(t_1)\bigr)- {C^L_0}\bigl(\ovv(t_0)\bigr)\\
&\qquad +\int_{t_0}^{t_1}{\rm Ad}\bigl(g_{\ovg(t)}^{-1}\bigr)   B_0\bigl(\ovv(t), \ovg'(t)\bigr)\,dt.
\end{split}
\end{equation}
Let us recall that, working inside the group $H\rtimes_{\alpha}G$, the action of  $\alpha(g)$, where $g\in G$, on $H$ is just conjugation by $g$, and so ${\rm Ad}(g)$ maps $L(H)$ into itself.

Then the form  $\hat\Omega$, written out in full, is given by
\begin{equation}\label{E:hatOmfull}
\begin{split}
& {\hat\Omega}_{{\ovg}, h}({\ovv}+X) \\
&={\rm Ad}(h^{-1})\left[A\bigl(\ovv(t_0)\bigr) +{\rm Ad}\bigl(g^*({\ovg}) \bigr)^{-1}\bigl({C^R}\bigr)|_{\ovg(t_1)}\bigl(\ovv(t_1)\bigr)\right. \\
&\hskip 1in \left. - {C^L}|_{\ovg(t_0)}\bigl(\ovv(t_0)\bigr)\right. \\
&\left.\hskip  1in+\int_{t_0}^{t_1}{\rm Ad}\bigl(g_{\ovg(t)}^{-1}\bigr) B|_{\ovg(t)}({\ovv}(t), {\ovg}'(t) \,dt +Xh^{-1}\right]
\end{split}
\end{equation}
\subsection{Horizontal lifts by $\hat\omega$ and ${\hat\Omega}$}

Consider a $C^\infty$ map
$$\Gamma:[t_0,t_1]\times [s_0,s_1] \to M:(t,s)\mapsto \Gamma(t,s)=\Gamma_s(t)$$
and an $\ovA$-horizontal lift
$$ \ovG_0:[t_0,t_1]\to P: t\mapsto \ovG_0(t)$$
of the initial path $\Gamma_0$ on $M$. Now let
$$[t_0,t_1]\times [s_0,s_1]\to P:(t,s) \mapsto {\tilde\Gamma}_s(t)$$
be the $\om$-horizontal lift of the path $s\mapsto\Gamma_s$, with initial value being the given path $\ovG_{s_0}$:
$${\tilde\Gamma}_{s_0} =\ovG_{s_0}.$$
Now let
$$\ovG:[t_0,t_1]\times [s_0,s_1]\to P:(t,s)\mapsto\ovG_s(t)$$
be the map for which each path $\ovG_s$ is $\ovA$-horizontal and the initial points $s\mapsto\ovG_s(t_0)$ constitute a path
$$[t_0,t_1]\to P: t\mapsto \ovG_{s_0}(t)$$ 
that is horizontal with respect to the connection for $A$.

Then by  (\ref{E:adotsas})  (applied to the connection form $\hat\om$) it follows that the path ${\tlG}_s$ is obtained from $\ovG_s$ by translation with an element $a_s\in G$:
$$\tlG_s=\ovG_sa_s ,$$
where $s\mapsto a_s$ satisfies the differential equation
\begin{equation}\label{E:flatadotsas}
\begin{split}
 {\dot a}_sa_s^{-1}& = -{\rm Ad}\bigl(g^*(\ovG_s(t_1))^{-1} \bigr){C^R_0}\Bigl(\partial_t{\ovG}_s(t_1)\Bigr) + {C^L_0}\Bigl(\partial_t{\ovG}_s(t_0)\Bigr) \\
 &\hskip 1in -\int_{t_0}^{t_1}{\rm Ad}\bigl(g^*(\ovG_s(t))^{-1} \bigr)  B_0\Bigl( \partial_s{\ovG}_s(t), \partial_t{\ovG}_s(t)\Bigr)\,dt.
  \end{split}
\end{equation}
with initial value $a_{s_0}=e$.   (In  (\ref{E:adotsas}) there is a first term on the right that is absent here because $A=\ovA$ in this context.)

Next, by Proposition \ref{P:Omegahorlift} the path
$$[s_0,s_1]\to \pdbap:s\mapsto (\tlG_s, x_s)$$
is ${\hat\Omega}$-horizontal if and only if the path $s\mapsto x_s\in H$
satisfies the differential equation
\begin{equation}\label{E:flatdothdot2}
\begin{split}
{\dot x}_sx_s^{-1} &=   C^L_1\bigl(\partial_s\tlG_s(t_0)\bigr)- {\rm Ad}\bigl(g^*(\ovG_s(t_1))^{-1} \bigr) C^R_1\bigl(\partial_s\tlG_s(t_1)\bigr)  \\
&  \qquad -\int_{t_0}^{t_1}{\rm Ad}\bigl(g^*(\ovG_s(t))^{-1} \bigr)  B_1\Bigl(\partial_s{ \tlG}_s(t), \partial_t{ \tlG}_s(t)\Bigr)\,dt.
\end{split}
\end{equation}
From the second relation  bwteen $\alpha$ and $\tau$ that have noted  in (\ref{liecross}) we have
\begin{equation}\label{E:alphatau}
{\rm Ad}\bigl(\tau(h)\bigr)X=\alpha\bigl(\tau(h)\bigr)X={\rm Ad}(h)X
\end{equation} 
for all $h\in H$ and $X\in L(H)$. From this we see that in the right side of (\ref{E:flatdothdot2}) we can replace each $g^*$ by an $h^*$, and so
\begin{equation}\label{E:flatdothdot2hstar}
\begin{split}
{\dot x}_sx_s^{-1} &=   C^L_1\bigl(\partial_s\tlG_s(t_0)\bigr)- {\rm Ad}\bigl(h^*(\ovG_s(t_1))^{-1} \bigr) C^R_1\bigl(\partial_s\tlG_s(t_1)\bigr)  \\
&  \qquad -\int_{t_0}^{t_1}{\rm Ad}\bigl(h^*(\ovG_s(t))^{-1} \bigr)  B_1\Bigl(\partial_s{ \tlG}_s(t), \partial_t{ \tlG}_s(t)\Bigr)\,dt.
\end{split}
\end{equation}
 
\subsection{Comparison with variation of parallel transport}  We continue with the framework as above.  We focus now on determining conditions on the forms   $C^{R,L}$  and $B$   that ensure that the path
$$[s_0,s_1]\to \pdbap:s\mapsto \Bigl(\tlG_s, h^*(\tlG_s)\Bigr)$$
is ${\hat\Omega}$-horizontal. To this end let us note first that
\begin{equation}
 h^*(\tlG_s)^{-1}=h_s(t_1),
 \end{equation}
Consider the path
 $$[t_0,t_1]\to H:t\mapsto h_s(t)$$
 that satisfies
 \begin{equation}
 \begin{split}
 h_s'(t) + C_1\bigl({\tilde\Gamma}_s'(t)\bigr)h_s(t) &=0 \\
 h_s(t_0) &= e,
 \end{split}
 \end{equation}
 and let
 \begin{equation}\label{E:hxhs}
 y_s(t)\stackrel{\rm def}{=} h_s(t)^{-1}.
 \end{equation}
 Then
 \begin{equation}\label{E:xdots}
 \begin{split}
 y_s(t)^{-1} {  y}'_s(t)&=  h_s(t)\bigl(-h_s(t)^{-1}{   h}'_s(t)h_s(t)^{-1}\bigr)\\
 &=
 -{  h}'_s(t)h_s(t)^{-1}\\
 &= C_1\bigl({\tilde\Gamma}_s'(t)\bigr).
 \end{split}
 \end{equation}
 Then, as we show below in (\ref{E:Dst3}), 
  \begin{equation}\label{E:hsexpress}
 \begin{split}
  & {\dot y}_s(t_1) y_s(t)^{-1} - {\dot y}_s(t_0) y_s(t_0)^{-1}\\
 &=- \int_{t_0}^{t_1} y_s(u)\Bigl(d{{\tilde C}_1}+\frac{1}{2}[{{\tilde C}_1},{{\tilde C}_1}]\Bigr)(\partial_t,\partial_s)y_s(u)^{-1} du \\
 &\qquad\qquad + y_s(t_1){  C}_1\bigl(\partial_s\tlG_s(t_1)\bigr)y_s(t_1)^{-1} -y_s(t_0) {  C}_1\bigl(\partial_s\tlG_s(t_0)\bigr)y_s(t_0)^{-1},
  \end{split}
 \end{equation}
 where
 \begin{equation}\label{E:CtildGam}
 {\tilde C}_1={\tilde\Gamma}^*C_1.
 \end{equation}
 Since $y_s(t_0)$ is held fixed at $e$ and   
 \begin{equation}\label{E:hsstarxs}
 y_s(t_1)=h^*(\tlG_s)^{-1},
 \end{equation}
 we have then, on using (\ref{E:xdots}),
  \begin{equation}\label{E:hsexpress2}
 \begin{split}
  &  \partial_sh^*(\tlG_s)h^*(\tlG_s)^{-1} \\
 &= h^*(\tlG_s)^{-1}{  C}_1\bigl(\partial_s\tlG_s(t_1)\bigr) h^*(\tlG_s) - {  C}_1\bigl(\partial_s\tlG_s(t_0)\bigr)\\
 &\qquad\qquad +   \int_{t_0}^{t_1} h_s(u)^{-1}\Bigl(d{{\tilde C}_1}+\frac{1}{2}[{{\tilde C}_1},{{\tilde C}_1}]\Bigr)(\partial_t,\partial_s)h_s(u)\, du.  \end{split}
 \end{equation}
Comparing with the equation for $\hat\Omega$-parallel transport (\ref{E:flatdothdot2}
) we see that the two equations agree if
\begin{equation}\label{E:C0C1}
\begin{split}
 C_1^L= C_1^R &=-{  C}_1\\
B_1 &=-\Bigl(d{{  C}_1}+\frac{1}{2}[{{  C}_1},{{  C}_1}]\Bigr).
\end{split}
\end{equation}

We have thus completed the proof of  Proposition \ref{P:reduction}.

 \subsection{Variation of  differential equations}\label{ss:vardiff}
 In this subsection we work through the details of the computation that leads to the equation (\ref{E:hsexpress2}) which was central to the proof of Proposition \ref{P:reduction}.

Consider the differential equation
\begin{equation}
b(t)^{-1} b'(t) = C\bigl({\tilde\gamma}'(t)\bigr)  
 \end{equation}
 for $t\in [0,1]$.
We shall determine  how fast the terminal point $b(1)$ changes when we change the path ${\tilde\gamma}$. 

Our  strategy is to consider the family of differential equations
\begin{equation}\label{E:bprims}
b_s(t)^{-1} b_s'(t) =   C\bigl({\tilde\Gamma}_s'(t)\bigr)\end{equation}
where $s\in [0,1]$ and $t\in [0,1]$, and the prime is derivative with respect to $t$. Here
\begin{equation} {\tilde\Gamma}: [0,1]\times [0,1]\to P: (t,s)\mapsto {\tilde\Gamma}_s(t)
\end{equation}
is a smooth map. We think of $s$ as a variational parameter and our goal is to compute how fast $b_s(1)$ changes with $s$.

Let us denote the right hand  side by $E_s(t)\in L(G)$:
\begin{equation}\label{E:Est} E_s(t) = C\bigl({\tilde\Gamma}_s'(t)\bigr).
\end{equation}
Thus our differential equation is
\begin{equation}b_s(t)^{-1} b'_s(t)=E_s(t).
\end{equation}
Now let
\begin{equation}\label{E:Dst} D_s(t) ={\dot b}_s(t)b_s(t)^{-1},
\end{equation}
where 
\begin{equation}\label{E:dotbs}
{\dot b}_s(t)=\partial_sb_s(t)
\end{equation}
is the derivative which contains the information we are ultimately seeking.  Our goal is to compute $D_s(t)$.

As always, we will  the $s$-derivative by a dot over the letter:
\begin{equation}\label{E:dotxs}
 {\dot x}_s(t)={\partial_s}x_s(t).
\end{equation}

Our strategy is to compute $D_s'(t)=\partial_tD_s(t)$ and then obtain $D_s(t)$ by integrating:
$$D_s(t)= \int_0^t D'_s(u)\,du +D_s(0).$$
So now let us compute the derivative $D'_s(t)$. From (\ref{E:Dst} ) we have
\begin{equation}\label{E:Dprimst}
 D'_s(t) =-{\dot b}_s(t) b_s(t)^{-1}b'_s(t)b_s(t)^{-1}+\bigl(\partial_s\partial_tb_s(t)\bigr)b_s(t)^{-1}
\end{equation}

 Now we are going to work out $\partial_sE_s(t)$, but first let us recall what $E_s(t)$ is:
\begin{equation}\label{E:Est2}
 E_s(t) =b_s(t)^{-1}b'_s(t).
\end{equation}
It is  important that we have  $b_s(t)^{-1}$ on the left for $E_s(t)$ and on the right for $D_s(t)$. Returning to the calculation, we have:
\begin{equation}\label{E:Edelst}
  {\dot E}_s(t) = b_s(t)^{-1} \partial_s\partial_tb_s(t) -b_s(t)^{-1}{\dot b}_s(t)b_s(t)^{-1}b'_s(t).
 \end{equation}
Comparing with $D'_s(t)$  we see that it is useful to conjugate ${\dot E}_s(t)$ by $b_s(t)$:
\begin{equation}\label{E:bstinvEprimst}
 b_s(t){\dot E}_s(t)b_s(t)^{-1} = \bigl(\partial^2_{st}b_s(t)\bigr) b_s(t)^{-1}-{\dot b}_s(t)b_s(t)^{-1}b'_s(t)b_s(t)^{-1},\end{equation}
which is exactly $D'_s(t)$! Thus:
\begin{equation}\label{E:Dprimst2}
 D'_s(t) =b_s(t){\dot E}_s(t)b_s(t)^{-1}.\end{equation}
Integrating, we obtain
\begin{equation}\label{E:Dst2} D_s(t) =D_s(0)+\int_0^t b_s(u){\dot E}_s(u) b_s(u)^{-1}\,du.
\end{equation}
This is in itself a nice formula  for $D_s(t)={\dot b}_s(t)b_s(t)^{-1}$,
the rate of change of $b_s(t)$ when $s$ is varied. 

Let us  formally summarize what we have proved so far as a self-contained result.

\begin{prop}\label{P:nonabstokes}
Let $H$ be a Lie group and 
$$E:[t_0,t_1]\times [s_0,s_1]\to L(H):(t,s)\mapsto E_s(t)$$
 a $C^1$ mapping. Suppose
 $$[t_0,t_1]\times [s_0,s_1]\to H:(t,s)\mapsto b_s(t)$$
 be a function that satisfies
 \begin{equation}\label{E:bprims2}
 b_s(t)^{-1}b_s'(t)  =E_s(t)\ \end{equation}
 for all $(t,s)\in [t_0,t_1]\times [s_0,s_1]$. 
 Then 
 \begin{equation}\label{E:brpmt}
 {\dot b}_s(t)b_s(t)^{-1} ={\dot b}_s(t_0)b_s(t_0)^{-1}+\int_{t_0}^t{\rm Ad}\bigl(b_s(u)\bigr){\dot E}_s(u)\,du
 \end{equation}
 for all $(t,s)\in [t_0,t_1]\times [s_0,s_1]$, with a dot over a letter denoting the derivative with respect to $s$.
\end{prop}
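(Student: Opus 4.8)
The plan is to carry out the non-abelian variation-of-parameters computation that the preceding discussion already sketches, organized around the single auxiliary quantity $D_s(t):=\dot b_s(t)b_s(t)^{-1}$, whose value at $t=t_1$ is precisely the infinitesimal motion of the terminal point we are after. The target identity is $\partial_t D_s(t)={\rm Ad}\bigl(b_s(t)\bigr)\dot E_s(t)$; once this is in hand, the stated formula follows at once by integrating in $t$ from $t_0$ to $t$ and observing that the integration constant is $D_s(t_0)=\dot b_s(t_0)b_s(t_0)^{-1}$.

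First I would differentiate $D_s$ in $t$, using the product rule for $H$-valued maps together with $\partial_t\bigl(b_s(t)^{-1}\bigr)=-b_s(t)^{-1}b_s'(t)b_s(t)^{-1}$, to get $\partial_t D_s(t)=\bigl(\partial_t\dot b_s(t)\bigr)b_s(t)^{-1}-\dot b_s(t)b_s(t)^{-1}b_s'(t)b_s(t)^{-1}$. Next I would differentiate the defining relation $E_s(t)=b_s(t)^{-1}b_s'(t)$ in $s$. The structural point here is the asymmetry in where the inverse sits: $E_s$ is a \emph{left} logarithmic derivative in $t$ while $D_s$ is a \emph{right} logarithmic derivative in $s$. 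Using $\partial_s\bigl(b_s(t)^{-1}\bigr)=-b_s(t)^{-1}\dot b_s(t)b_s(t)^{-1}$ and the equality of mixed partials $\partial_s b_s'(t)=\partial_t\dot b_s(t)$, one finds $\dot E_s(t)=-b_s(t)^{-1}\dot b_s(t)b_s(t)^{-1}b_s'(t)+b_s(t)^{-1}\partial_t\dot b_s(t)$. Conjugating this by $b_s(t)$ produces exactly the expression for $\partial_t D_s(t)$ found above, i.e. $\partial_t D_s(t)={\rm Ad}\bigl(b_s(t)\bigr)\dot E_s(t)$; integrating in $t$ then yields the Proposition.

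I expect the only genuine obstacle to be the technical regularity bookkeeping: with $E$ merely $C^1$ one must justify that $(t,s)\mapsto b_s(t)$ is jointly regular enough for $\partial_s\partial_t b$ and $\partial_t\partial_s b$ to exist and coincide, and that the product- and inverse-differentiation rules used above are the correct Lie-group versions in $H$. Both are routine: smooth dependence of solutions of the ODE $b_s(t)^{-1}b_s'(t)=E_s(t)$ on the parameter $s$ supplies the needed joint regularity of $b$, and since every formula in the argument is local along the relevant paths one may, if one wishes, verify the manipulations in a chart around the identity of $H$ or in a faithful matrix representation, where they become ordinary matrix calculus. I would therefore dispatch this point briefly rather than belabor it, and present the computation of $\partial_t D_s(t)$ as the substance of the proof.
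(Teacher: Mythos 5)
Your proposal is correct and follows essentially the same route as the paper: introduce $D_s(t)=\dot b_s(t)b_s(t)^{-1}$, differentiate it in $t$, differentiate $E_s(t)=b_s(t)^{-1}b_s'(t)$ in $s$ using equality of mixed partials, observe that conjugating $\dot E_s(t)$ by $b_s(t)$ reproduces $\partial_t D_s(t)$, and integrate. The only difference is your brief regularity discussion, which the paper omits but which is a harmless addition.
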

Results of this type are sometimes called `non-abelian Stokes formulas.'

Now let us return to the geometric context, with notation as before. The specific form   (\ref{E:Est})  for $E_s(t)$:
\begin{equation}\label{E:bEst2}
 E_s(t)=  C\Bigl(\partial_t{\tilde\Gamma}_s(t)\Bigr).\end{equation}
We can write this also as
\begin{equation}\label{E:bEst3}
  E_s(t) = {{\tilde C}}(\partial_t),\end{equation}
where ${{\tilde C}}$ is the pull back

\begin{equation}\label{E:CGam}
 {{\tilde C}}={\tilde\Gamma}^*C,\end{equation}
which is a 1-form on $[0,1]\times[0,1]$. Let us now use the formula for the exterior differential of a 1-form:
\begin{equation}\label{E:dCGam}
  d{{\tilde C}}(v,w) =v[{{\tilde C}}(w)] -w[{{\tilde C}}(v)] -{{\tilde C}}([v,w]),\end{equation}
for any smooth vector fields $v$ and $w$. Then
\begin{equation}\label{E:dCGam2}
  d{{\tilde C}}(\partial_t,\partial_s) = \partial_t\Bigl({{\tilde C}}(\partial_s)\Bigr)-\partial_s\Bigl({{\tilde C}}(\partial_t)\Bigr)-{{\tilde C}}\Bigl([\partial_t,\partial_s]\Bigr).\end{equation}
The Lie bracket  of the coordinate vector fields $\partial_t$ and $\partial_s$ appearing on the  right is $0$. So we have
\begin{equation}\label{E:partialsEst}
 {\dot E}_s(t) =   \partial_s\Bigl[{{\tilde C}}(\partial_t)\Bigr] = \partial_t\Bigl[{{\tilde C}}(\partial_s)\Bigr]-d{{\tilde C}}(\partial_t, \partial_s).\end{equation}
 To keep the notation simple let us write
\begin{equation}\label{E:Fst}
  F_s(t) ={{\tilde C}}_{(t,s)}(\partial_s).\end{equation}
Then
\begin{equation}\label{E:Edotst}
  {\dot E}_s(t) = - d{{\tilde C}}(\partial_t, \partial_s) + \partial_t F_s(t).\end{equation}
Looking back at $D'_s(t)$ as given in (\ref{E:Dprimst2}) we compute
\begin{equation}\label{E:Dprimst3}
   D_s'(t) =b_s(t)\Bigl(- d{{\tilde C}}(\partial_t, \partial_s)+ F_s'(t) \Bigr)b_s(t)^{-1}.\end{equation}
We focus for now on the second term  and compute:
\begin{equation}\label{E:bstFst}
\begin{split}
  &
  b_s(t)F'_s(t)b_s(t)^{-1} \\
  &=\partial_t\Bigl(b_s(t)F_s(t)b_s(t)^{-1}\Bigr) - b'_s(t) F_s(t)b_s(t)^{-1}-b_s(t)F_s(t)\partial_t\bigl(b_s(t)^{-1}\bigr)\\
& = \partial_t\Bigl(b_s(t)F_s(t)b_s(t)^{-1}\Bigr) - b_s(t) E_s(t)F_s(t)b_s(t)^{-1}    + b_s(t) F_s(t)E_s(t)b_s(t)^{-1}\\
&= \partial_t\Bigl(b_s(t)F_s(t)b_s(t)^{-1}\Bigr) -b_s(t)[E_s(t), F_s(t)] b_s(t)^{-1}
\end{split}
\end{equation}
Let us analyze the Lie bracket term
\begin{equation}\label{E:LieEsFst}
   [E_s(t), F_s(t) ] = [{{\tilde C}}(\partial_t), {{\tilde C}}(\partial_s)]\end{equation}
The 2-form $[{\tilde C},{\tilde C}]$ is defined by
\begin{equation}\label{E:brackCC}
 [{\tilde C},{\tilde C}](v,w) =[{\tilde C}(v), {\tilde C}(w)] -[{\tilde C}(w), {\tilde C}(v)]=2 [{\tilde C}(v), {\tilde C}(w)].
 \end{equation}
 There is also the related notation
 \begin{equation}\label{E:CwedgeC}
({\tilde C}\wedge {\tilde C})(v,w) =[{\tilde C}(v), {\tilde C}(w)],
 \end{equation}
 which is directly meaningful if ${\tilde C}$ takes values in a matrix Lie algebra.

Hence
\begin{equation}\label{E:LieEsFst2}
   [E_s(t), F_s(t) ] =  \frac{1}{2}[{{\tilde C}}, {{\tilde C}}](\partial_t,\partial_s).
\end{equation}
Now glancing back a few steps at (\ref{E:Dprimst3}) we see that
\begin{equation}\label{E:Dprimst4}
\begin{split}
   D_s'(t)& =- b_s(t)d{{\tilde C}}(\partial_t, \partial_s) b_s(t)^{-1} +\partial_t\Bigl(b_s(t)F_s(t)b_s(t)^{-1}\Bigr) \\
   &\hskip 1in -b_s(t)\frac{1}{2}[{{\tilde C}},{{\tilde C}}](\partial_t, \partial_s)b_s(t)^{-1}.
   \end{split}
   \end{equation}
 Integrating,  and recalling from (\ref{E:Dst}) that $D_s(t)$ is ${\dot b}_s(t)b_s(t)^{-1}$, we have
\begin{equation}\label{E:Dst3}
\begin{split}
  {\dot b}_s(t)b_s(t)^{-1}- {\dot b}_s(0)b_s(0)^{-1} 
 &=- \int_0^t b_s(u)\Bigl(d{{\tilde C}}+\frac{1}{2}[{{\tilde C}},{{\tilde C}}]\Bigr)(\partial_t,\partial_s)b_s(u)^{-1} du \\
 &\qquad + b_s(t)F_s(t)b_s(t)^{-1}-  b_s(0)F_s(0)b_s(0)^{-1},
  \end{split}
\end{equation}
wherein, as before, $F_s(t) ={{\tilde C}}_{(t,s)}(\partial_s)$.

\subsection{The holonomy bundle}\label{ss:holb} 
For a principal $G$-bundle  $\pi:P\to M$ equipped with a connectiom $A$  we denote by $P_A(u)$ the set of all terminal points of $A$-horizontal paths that initiate at any given point $u\in P$.  The {\em holonomy group} $H_A(u)$ consists of all $g\in G$ for which $ug\in P_A(u)$.   If $\ovg_u^p$ is an $A$-horizontal path on $P$ initiating at $u$ and terminating at $p\in P_A(u)$ then ${\ovg}_u^pg$ is also $A$-horizontal, initiating at the point $ug$ and terminating at $pg$; if $g\in H_A(u)$ then we can choose an $A$-horizontal path $\ovg_u^{ug}$ from $u$ to $ug$, and the composite $(\ovg_u^pg)\circ \ovg_u^{ug}$ is an $A$-horizontal path from $u$ to the point $pg$. Thus $P_A(u)$ is mapped into itself by the right action of the holonomy group $H_A(u)\subset G$.  In this way the structure 
\begin{equation}
\pi:P_A(u)\to M:p\mapsto \pi(p)
\end{equation}
is a principal $H_A(u)$-bundle over $M$. The connection $A$ reduces to a connection on this bundle.  A celebrated result of Ambrose and Singer  \cite{AmSi1953} relates the Lie algebra of the holonomy group to the Lie subalgebra of $L(G)$ spanned by elements $F^A(v,w)$, where $F^A$ is the curvature of $A$ and $v$ and $w$ run over all vectors in $T_pP$ with $p$ running over the holonomy bundle $P_A(u)$. (Since composition of paths is crucial in these discussions, such as even to see that $H_A(u)$ is a subgroup, the definition of the holonomy bundle should involve a family of paths that is closed under composition; in fact we may use just the type of paths we have been using, $C^\infty$ and constant near the initial and final times.)

\section{Differential calculus on path spaces}\label{s:diffcalc}

We have avoided putting a manifold structure on the spaces of paths with which we have worked.  Such a structure  is not logically needed for any of our constructions and is useful  only as an idea.  It is standard practice in the theory of stochastic processes (which is concerned with integration on path spaces)  to work primarily with notions of differentiation and integration defined in the specific context of path or function spaces rather than on any abstract infinite dimensional manifold. Although an abstract theory of such integration was constructed  (Kuo \cite{Kuo1971}) it has been found to be more useful to define geometric, differential and measure theoretic notions directly on path spaces. Let us then summarize here the differential notions we need for our work.

Consider a set $X$ whose points are paths on a given manifold $M$. In this context we require that the paths be $C^\infty$, and there might be additional restrictions placed.

By a  {\em tangent vector} $v$ to $X$ at a point  in $X$ given by a path $\gamma:[t_0,t_1] \to M$ we mean a $C^\infty$ vector field $v:[t_0,t_1]\to TM$ along $\gamma$ that is constant near $t_0$ and near $t_1$.  For example, there is the special vector $\gamma'\in T_{\gamma}X$ which is just the tangent vector field along $\gamma$ (the tangent vector field along $\gamma$ is zero near the initial and final times). We denote the set of all   vectors tangent to $X$ at $\gamma$ by $T_{\gamma}X$ and call this the {\em tangent space} to $X$ at $\gamma$.   This is clearly a vector space under pointwise addition and scaling.

If $v$ is a $C^\infty$ vector field on an open subset of $M$ and $\gamma\in {\mathcal P}(M)$ lies entirely in $U$ then we obtain a vector field $v_{\gamma}$ along $\gamma$ given by
$$v_{\gamma}(t)=v\bigl(\gamma(t)\bigr)\quad\hbox{for all $t\in [t_0, t_1]$.}$$
Then $v_{\gamma}$ is $C^\infty$ and constant near $t_0$ and $t_1$, and hence is a vector in the tangent space $T_{\gamma}{\mathcal P}(M)$. 

A {\em  $k$-form} $\Theta$ on $X$ is an assignment to each $\gamma\in X$ an alternating multilinear mapping  
$$(T_{\gamma}X)^k\to\mbr: (v_1,\ldots, v_k)\mapsto \Theta_{\gamma}(v_1,\ldots, v_k).$$
A typical example of interest  is a $k$-form $I(\theta)$ that arises from a $k$-form $\theta$ on $M$ by
the specification:
\begin{equation}\label{E:Ith}
I(\theta)_{\gamma}(v_1,\ldots, v_k)=\int_{t_0}^{t_1} \theta_{\gamma(t)}\bigl( v_1(t),\ldots, v_k(t)\bigr)\,dt.
\end{equation}
Many forms  on $X$ of interest to us have some additional features: for example, they are invariant under a class of reparametrizations of the paths. Moreover, many of the forms we use vanish when contracted on the tangent vector field. As an example consider, with $I(\theta)$ as above,   the $(k-1)$-form on $X$ given by: 
\begin{equation}\label{E:CIth}
(i_{\gamma'}I(\theta)_{\gamma})(v_1,\ldots, v_{k-1})=\int_{t_0}^{t_1} \theta_{\gamma(t)}\bigl(\gamma'(t), v_1(t),\ldots, v_{k-1}(t)\bigr)\,dt.
\end{equation}
This form vanishes when one of the vectors $v_j$ happens to be $\gamma'$. The form $i_{\gamma'}I(\theta)_{\gamma}$ is the {\em Chen integral} 
\begin{equation}\label{E:ChenIth}
\int_{\gamma}\theta\stackrel{\rm def}{=} i_{\gamma'}I(\theta)_{\gamma}.
\end{equation}

Intuitively we think of $X$ as a bundle over a quotient space $[X]$ after quotienting by a  group  of reparametrizations. Of interest then are forms on $X$ that vanish along the orbital directions and are invariant under translations  (reparametrizations) by the action of the structure group; thus these correspond to forms on $[X]$ pulled back up to  the space $X$.

Now let
$$\Gamma:[t_0,t_1]\times [s_0,s_1]\to M: (t,s)\to \Gamma_s(t) $$
be a $C^\infty$ map  which is stationary near the boundary in the following sense:   there is an $\epsilon>0$ such that for each fixed $s$ the point  $\Gamma_s(t)$ is the same when $t$ is at distance $<\epsilon$ from $\{t_0, t_1\}$, and for each fixed $t$ the point $\Gamma_s(t)$ is the same when $s$ is at distance $<\epsilon$ from $\{s_0, s_1\}$. Thus  each $\Gamma_s$ is in ${\mathcal P}(M)$ as defined in (\ref{E:parmpath}).  Then there is for each $s\in [s_0,s_1]$ the tangent vector ${\dot\Gamma}_s\in T_{\Gamma_s}{\mathcal P}(M)$ given by
\begin{equation}\label{E:Gdotst}
{\dot\Gamma}_s(t)=\partial_s\Gamma_s(t) \qquad\hbox{for all $t\in [t_0,t_1]$.}
\end{equation}
Other differential geometric notions such as bundles and connections over spaces of paths can be defined by natural extension of the usual definitions on finite dimensional spaces. It is interesting to note that for these considerations (aside from local triviality) we do not need a topology on the space of paths.

 \section{Concluding Remarks}\label{s:concr}

  In this paper we have studied the differential geometry of bundles in which the points of the bundle space are pairs of the form $(\ovg,h)$, where $\ovg$ is a path on a given principal bundle that is horizontal with respect to a given connection form and $h$ is a `decorating' element lying in a second structure group. We analyzed parallel transport in such path space bundles. These investigations are motivated by physical theories strings and particles interacting via traditional gauge fields as well as `higher' gauge fields operating over the space of paths (strings). In forthcoming work we will explore in greater detail local trivializations of the decorated bundle as well as the relationship with the theory of gerbes. 
   
 {\bf{ Acknowledgments.} }   Sengupta   acknowledges  (i) research support from   NSA grant H98230-13-1-0210,  and (ii) the SN Bose National Centre for its hospitality during visits when part of this work was done.  Chatterjee acknowledges support through a fellowship from the Jacques Hadamard Mathematical Foundation.


\begin{thebibliography}{99}

\bibitem{AbbasWag} Hossein Abbaspour, Friedrich Wagemann, {\em On 2-Holonomy}, (2012) \url{http://arxiv.org/abs/1202.2292}

\bibitem{AmSi1953} W. Ambrose, and I. M. Singer, {\em A theorem on holonomy}, Trans. Amer. Math. Soc.  {\bf 75 }(3) (1953)  428--443

 

 
 \bibitem{Attal1} Romain Attal, {\em Two-dimensional parallel transport : combinatorics and functoriality}, \url{http://arxiv.org/abs/math-ph/0105050}
 
 \bibitem{Attal2} Romain Attal, {\em Combinatorics of non-abelian gerbes with connection and curvature}, Ann. Fond. Louis de Broglie 29 (2004), no. 4, 609--633. 



\bibitem{BaezSchr} John C. Baez and Urs Schreiber, {\em Higher gauge theory},  Categories in algebra, geometry and mathematical physics, (eds. A. Davydov et al), Contemp. Math., Amer. Math. Soc., {\bf 431}(2007), 7-30


\bibitem{BaezWise} John C. Baez and Derek K. Wise, {\em Teleparallel Gravity as a Higher Gauge Theory},  \url{http://arxiv.org/abs/1204.4339}


\bibitem{Barr} John W. Barrett, {\em Holonomy and Path Structures in General Relativity and Yang-Mills Theory}, International Journal of Theoretical Physics {\bf 30} (1991) 1171-1215.

 
\bibitem{Bart} Toby Bartels,  {\em Higher Gauge Theory I: 2-Bundles},   at \url{http://arxiv.org/abs/math/0410328}

\bibitem{BM}  Lawrence Breen and    William Messing, {\em Differential geometry of gerbes},  Adv. Math. 198 (2005), no. 2, 732--846.
 
\bibitem{CP} A. Caetano, and Roger F. Picken,  {\em An axiomatic definition of holonomy}. Internat. J. Math. {\bf 5} (1994), no. 6, 835--848.

\bibitem{CLSpath} Saikat Chatterjee, Amitabha Lahiri, and Ambar  N. Sengupta, {\em Parallel Transport over Path Spaces},  Reviews in Math. Phys. {\bf 9} (2010) 1033-1059.

\bibitem{CLS2geom} Saikat Chatterjee, Amitabha Lahiri, and Ambar  N. Sengupta, {\em Path space Connections and Categorical Geometry},  Journal of Geom. Phys. {\bf 75} (2014) 1033-1059.





  
\bibitem{KS} Gregory Maxwell Kelly and  Ross Street,  {\em Reviews of the elements
    of $2$-categories}, Springer Lecture Notes in Mathematics,
  {\bf{420}}, Berlin, (1974),  75-103. 
  
  \bibitem{Kuo1971} Hui-Hsiung Kuo, {\em Integration Theory on Infinite-Dimensional Manifolds},  Trans. Amer. Math. Soc.  {\bf 159 }  (1971)  57-78.

  
\bibitem{Macl} Saunders Mac Lane, {\em Categories for the Working Mathematician}. Springer-Verlag (1971).

\bibitem{MarPick} Jo\~{a}o Faria Martins and  Roger Picken, {\em On two-Dimensional Holonomy},  Trans. Amer. Math. Soc. 362 (2010), no. 11, 5657-5695, \url{http://arxiv.org/abs/0710.4310}

\bibitem{MarPick2} Jo\~{a}o Faria Martins and  Roger Picken, {\em Surface holonomy for non-abelian 2-bundles via double groupoids}, Advances in Mathematics {\bf 226} (2011), no. 4, 3309-3366.


 


\bibitem{Parz2014} Arthur Parzygnat, {\em Gauge Invariant Surface Holonomy and Monopoles}, \url{http://arxiv.org/pdf/1410.6938v1.pdf}

\bibitem{Peif} Ren\'ee Peiffer, {\em \"Uber Identit{\"a}ten zwischen Relationen}. Mathematische Annalen,  Vol. 121 (1),  67-99 (1949).


 \bibitem{SZ} Emanuele Soncini and Roberto Zucchini,  {\em A New Formulation of Higher Parallel
Transport  in Higher Gauge Theory}, \url{http://arxiv.org/abs/1410.0775}


 \bibitem{SW1}Urs Schreiber and Konrad Waldorf, {\em Parallel Transport and Functors}, J. Homotopy Relat. Struct. {\bf 4},  (2009), no. 1, 187-244


\bibitem{SW2}Urs Schreiber and Konrad Waldorf, {\em Connections on non-abelian Gerbes and their Holonomy},  Theory Appl. Categ. {\bf 28} (2013), 476-540


 
 \bibitem{Vien} David Viennot, {\em Non-abelian higher gauge theory and categorical bundle}, (2012) \url{http://arxiv.org/abs/1202.2280}
 
 \bibitem{Whit} J. H. C. Whitehead, {\em Combinatorial Homotopy II.} Bull. Amer. Math. Soc. Volume 55, Number 5 (1949), 453-496
 

\end{thebibliography}
\end{document}